\newtheorem{theorem}{Theorem}
\newtheorem{lemma}{Lemma}[section]
\newtheorem{proposition}{Proposition}[section]
\newtheorem{definition}{Definition}
\newcommand{\rn}{\mathbb{R}^n}
\newcommand{\nn}{\mathbb{N}}
\newcommand{\phiki}{\Phi_{k,i}}
\newcommand{\beq}{\begin{eqnarray*}}
\newcommand{\eeq}{\end{eqnarray*}}
\newcommand{\beqn}{\begin{eqnarray}}
\newcommand{\eeqn}{\end{eqnarray}}
\newcommand{\bequa}{\begin{equation}}
\newcommand{\eequa}{\end{equation}}
\title[Positivity and almost positivity of biharmonic Green's  functions]{Positivity  and almost positivity
of biharmonic Green's  functions 
under Dirichlet boundary conditions
}
\author{Hans-Christoph Grunau }
\address{Fakult\"at f\"ur Mathematik, Otto-von-Guericke-Universit\"at, Postfach 4120, 39016 Magdeburg, Germany}
\email{hans-christoph.grunau@ovgu.de}
\author{Fr\'ed\'eric Robert}
\address{Universit\'e de Nice-Sophia Antipolis, Laboratoire J.-A.Dieudonn\'e,
Parc Valrose, 06108 Nice Cedex 2, France}
\email{frobert@math.unice.fr}
\begin{document}

\maketitle

\begin{center}
{\em Dedicated to Prof. Wolf von Wahl on the occasion of his 65th birthday}
\end{center}

\begin{abstract}
In general, for higher order elliptic equations and boundary value
problems like the biharmonic equation and the linear clamped plate
boundary value problem  neither a maximum principle
nor a comparison principle or -- equivalently -- a positivity
preserving property is available. The problem is rather involved since the
clamped boundary conditions prevent the boundary value problem
{from} being reasonably written as a system of second order boundary value
problems.

It is shown that, on the other hand, for bounded smooth domains
$\Omega \subset\mathbb{R}^n$, the negative part of the corresponding
Green's function is ``small'' when compared with its singular positive
part, provided $n\ge 3$.

Moreover, the biharmonic Green's  function in balls $B\subset\mathbb{R}^n$
under Dirichlet (i.e. clamped) boundary conditions is known explicitly and is positive.
It has been known for some time that positivity is preserved
 under small regular perturbations
of the domain, if $n=2$. In the present paper, such a stability result is proved for $n\ge 3$.\\
Keywords: Biharmonic Green's functions, positivity, almost positivity, blow-up procedure.
\end{abstract}

\section{Introduction}

Although
 simple examples show that strong maximum principles as satisfied
e.g. by harmonic functions cannot hold true for solutions of
higher order elliptic equations, it is reasonable to ask whether
higher order \emph{boundary value problems} may possibly enjoy a
\emph{positivity preserving property}. To be specific, we consider
the clamped plate boundary value problem:
\begin{equation}\label{cpe}
\left\{
\begin{array}{c}
\Delta ^2u=f
\mbox{ in }\Omega ,\medskip\  \\
u_{\vert _{\partial \Omega }}=\frac
\partial {\partial \nu}u_{\vert _{\partial \Omega }}=0.
\end{array}
\right.
\end{equation}
Here $\Omega \subset \mathbb{R}^n$ is a bounded smooth
  domain with
exterior unit normal $\nu$ at $\partial \Omega$, and $f$ is a
sufficiently smooth datum. If $n=2$, the unknown $u$ models the
vertical deflection of a horizontally clamped thin elastic plate
{from} the horizontal equilibrium shape under the vertical load $f$.
The boundary conditions $u_{\vert _{\partial \Omega }}=\frac
\partial {\partial \nu}u_{\vert _{\partial \Omega }}=0$ are called  \emph{Dirichlet boundary conditions} 
and are natural in mechanics to model the horizontal clamping: More precisely, the condition  
$u_{\vert _{\partial \Omega }}=0$ models the location of the clamping and the condition $\frac\partial {\partial \nu}u_{\vert _{\partial \Omega }}=0$ models the fact that the 
plate is clamped into some matter and is not able to rotate freely. We refer to the memoir written 
by Hadamard \cite{Hadamard1} for further considerations on this question. 
Throughout the present paper, always these boundary conditions will be considered.

\medskip We shall discuss comparison principles or positivity preserving properties 
for the biharmonic operator. We say that the clamped plate boundary value problem enjoys a
\emph{positivity preserving property } in $\Omega$ if the following assertion holds:
\begin{equation*}
\hbox{for all }u\in C^4(\overline{\Omega})\hbox{ and }f\in C^0(\overline{\Omega})\hbox{ satisfying }\eqref{cpe}, \hbox{ one has that }
\end{equation*}
\begin{equation*}
 \left\{\; f\geq 0\;\Rightarrow \; u\geq 0\; \right\}.
\end{equation*}
This definition is the natural extension of the ``positivity preserving property'' for the harmonic 
operator, i.e. a comparison principle for a second-order operator. 
While the ``positivity preserving property'' is well understood for second-order operators, it is much more involved for fourth-order ones.

\medskip  The positivity preserving property is closely related to the sign of the Green's function. More precisely, let $H_{\Omega}=H_{\Omega,\Delta^2}$ be the singular Green's function for the operator $\Delta^2$ in $\Omega$ under Dirichlet boundary conditions. 
Then, for any reasonable datum $f:\Omega\to\mathbb{R}$ the solution
$u:\overline{\Omega}\to\mathbb{R}$ of the clamped plate boundary value problem
(\ref{cpe}) is given by
$$
u(x)=\int_{\Omega} H_{\Omega}(x,y) f(y)\, dy.
$$
In particular, the clamped plate boundary value problem enjoys a
\emph{positivity preserving property } in $\Omega$ iff $H_\Omega(x,y)\geq 0$ for all $x,y\in\Omega$, $x\neq y$. 

\medskip  It is important to remark  that positivity issues are related to the 
specific kind of prescribed boundary conditions. 
More precisely, if one chooses  Navier boundary condition (that is $u=\Delta u=0$ on $\partial\Omega$), then a twofold application of the second order comparison principle immediately yields 
a positivity preserving property. This simple situation is misleading in several respects: 
As we will explain below, counterexamples show that the situation is much more intricate for other
 boundary conditions. For Dirichlet boundary conditions which we consider here, this iterative
trick fails completely. Moreover, even under Navier conditions, there is in general no 
positivity preserving property for perturbations of the biharmonic operator,
 see \cite{Schroeder,KawohlSweers}, cf. also the general approach in \cite{CoffmanGrover}.

\medskip The first example of a positive Green's function was  given by Boggio  \cite{Boggio}
by means of a beautiful explicit formula
for balls in $\mathbb{R}^n$,
even for the Dirichlet problem for \emph{polyharmonic operators}.  Boggio \cite{Boggio1} (1901) and Hadamard \cite{Hadamard1} (1908)
conjectured that in arbitrary reasonable
(two dimensional) domains $\Omega$, the positivity preserving
property should hold true.  In 1909
Hadamard \cite {Hadamard2} already knew, that the positivity conjecture
is false in annuli with small inner radius. However, there was still some hope to prove positivity for convex domains.

\medskip  Starting about 40 years later, numerous counterexamples disproved the
Boggio-Hadamard conjecture, see e.g.  \cite{Duffin0,Garabedian,ShapiroTegmark}. In particular, Coffman and Duffin \cite{CoffmanDuffin} proved that in any two dimensional domain
containing a right angle, the positivity preserving property does not hold. 
Even for smooth convex domains, the issue is quite intricate: Garabedian \cite{Garabedian} (see also Shapiro-Tegmark \cite{ShapiroTegmark}
and Hedenmalm-Jakobsson-Shimorin\cite{Hedenmalm}) proved that for mildly eccentric
ellipses, the prositivity preserving property does not hold true. 
On the other hand, according to \cite{GrunauSweers1}, one has positivity in ellipses,
which are close enough to a ball.
For a more extensive survey and further references we also refer to
\cite{GrunauSweers1}.

\medskip  Therefore, the positivity preserving property does not hold true in general, even for arbitrarily smooth uniformly convex domains. Hence, it is important to understand the lack of ``positivity preserving property'' with the help of the Green's function. One may  ask the following questions:
\begin{enumerate}
 \item 
Is positivity preserving in any bounded smooth domain possibly
``almost true'' in the sense that the negative part $H_{\Omega}^-(x,y):=
\min\{H_{\Omega}(x,y),0\}$ of the biharmonic Green's function
under Dirichlet boundary conditions is ``small relatively'' to the 
singular positive part $H_{\Omega}^+(x,y)$?
\item 
Are there are at least families of domains, different from balls,
where the biharmonic Green's functions
under Dirichlet boundary conditions  are
positive?
\end{enumerate}

Question (1) is motivated by reactions of physicists and engineers
on the mathematical results concerning positivity preserving and
sign change. They may be summarised as follows: ``For a clamped
plate without corners, we do not expect downwards deflections
if the force is pushing upwards. If such a phenomenon can be 
mathematically observed we think that perhaps the model is not
perfectly suitable or that negativity is so small that it cannot be
observed in reality.'' Also numerical experiments give support
to the second hypothesis, which is subject of our first main result.

\medskip  The general behaviour of the Green's functions is modeled on the behaviour of the 
singular fundamental solution on $\mathbb{R}^n$. On the whole space, we have that (letting  $e_n$ be the $n$-dimensional volume of $B_1(0)\subset\mathbb{R}^n$)

$$H_{\mathbb{R}^n}(x,y)=\left\{\begin{array}{ll}
\displaystyle\frac{1}{2(n-4)(n-2)n e_n}|x-y|^{4-n}, & \hbox{ when }n\geq 5,\\[4mm]
\displaystyle\frac{1}{16e_4}\log\frac{1}{|x-y|} ,& \hbox{ when }n=4,\\[4mm]
\displaystyle-\frac{1}{8\pi}|x-y|, & \hbox{ when }n=3,\\
\end{array}\right.$$
for all $x,y\in\mathbb{R}^n$, $x\neq y$. If $n\ge 5$, this fundamental solution can even be interpreted
as a Green's function in $\mathbb{R}^n$, where
the Dirichlet boundary conditions  at infinity are understood
as a suitable decay. In the general framework of a bounded smooth domain of $\mathbb{R}^n$, Krasovski\u{\i} \cite{Krasovskij1,Krasovskij2} proved that there exists a constant $C(\Omega)$ such that
$$
|H_\Omega (x,y)| \le C(\Omega) \left\{\begin{array}{ll}
|x-y|^{4-n},\quad & \mbox{\ if\ }n>4,\\
\left( 1+|\log|x-y|\,| \right),& \mbox{\ if\ }n=4,\\
1,& \mbox{\ if\ }n<4;\\
\end{array}\right.
$$
for all $x,y\in \Omega$, $x\neq y$.  These estimates give a uniform bound
for the singular behaviour for Green's functions but do not consider their
boundary behaviour. The latter was done by  Dall'Acqua and Sweers \cite{DallacquaSweersJDE}
by means of integrating Krasovski\u{\i}'s estimates for $H$ and its derivatives.:
\begin{equation}\label{twosidedestimate}
 |H_{\Omega} (x,y)| \le\left\{ \begin{array}{ll}
\displaystyle C \cdot |x-y|^{4-n} \min\left\{
1,\frac{d(x)^2d(y)^2}{|x-y|^4}
\right\},
&\mbox{if\ } n>4,
\\[4mm]
\displaystyle C \cdot \log\left(1+\frac{d(x)^2d(y)^2}{|x-y|^4} \right),
&\mbox{if\ } n=4,
\\[4mm]
\displaystyle C \cdot
d(x)^{2-n/2} d(y)^{2-n/2}  \min\left\{
1,\frac{d(x)^{n/2} d(y)^{n/2} }{|x-y|^n}
\right\},
&\mbox{if\ } n<4.
\end{array}
\right.
\end{equation}
Here, $d(x)=\operatorname{d}(x,\partial\Omega)$ and $C=C(\Omega)>0$
denotes a constant.

As far as the positive part $H_{\Omega} ^+$ is concerned, these
estimates cannot be improved, see e.g. \cite{GrunauSweers2}.
However, they do not distinguish between the positive and the
negative part of the Green's function. A distinction between
$H_{\Omega}^+$ and $H_{\Omega}^-$ and showing
that pairs of points of negativity cannot approach each other is the  subject
of our first main result, which was announced in \cite{GrunauRobert}.
\begin{theorem}\label{smallnegativepart}
Let  $\Omega\subset \mathbb{R}^n$ $(n\ge 3)$ be a bounded $C^{4,\alpha}$-smooth
domain. We denote by $H_{\Omega}$ the biharmonic Green's function in 
$\Omega$ under Dirichlet boundary conditions. 
Then, there exists a constant $\delta=\delta(\Omega)>0$ such that for any two points
$x,y\in\Omega$, $x\not=y$,
$$
H_{\Omega}(x,y)\le 0 \mbox{\ implies that\ } |x-y|\ge \delta.
$$
Consequently, there exists a constant $C=C(\Omega)>0$ such that for any two points
$x,y\in\Omega$, $x\not=y$, we have that
\begin{equation}\label{boundfrombelow0}
H_{\Omega}(x,y)\geq -C(\Omega),
\end{equation}
i.e. the negative part $H_\Omega^-$ of the Green's function is bounded.
Moreover, if $\Omega$ is smooth enough for (\ref{twosidedestimate})
to hold true, the estimate (\ref{boundfrombelow0}) from below can be refined:
\begin{equation}\label{boundfrombelow}
 H_{\Omega}(x,y)\ge -C(\Omega) \, d(x)^2\, d(y)^2.
\end{equation}
\end{theorem}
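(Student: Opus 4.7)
The natural approach is a contradiction argument combined with a blow-up analysis, as signalled by ``blow-up procedure'' in the keywords. Suppose the first assertion fails; then there exist sequences $(x_k), (y_k)$ in $\Omega$ with $x_k \neq y_k$, $r_k := |x_k - y_k| \to 0$, and $H_\Omega(x_k, y_k) \le 0$. Passing to a subsequence I may assume $x_k \to x_\infty \in \overline{\Omega}$ and $\eta_k := (y_k - x_k)/r_k \to \eta_\infty$ with $|\eta_\infty| = 1$. Set $d_k := d(x_k, \partial\Omega)$; two regimes arise depending on the ratio $d_k/r_k$: the \emph{interior regime} $d_k/r_k \to \infty$, where the blow-up domain exhausts $\mathbb{R}^n$, and the \emph{boundary regime} $d_k/r_k$ bounded, where (after translation to the nearest boundary point and straightening of $\partial\Omega$) it exhausts the half-space $\mathbb{R}^n_+$.

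In either regime I blow up at scale $r_k$ and define
$$
G_k(\xi, \eta) := r_k^{n-4}\, H_\Omega\bigl(x_k + r_k \xi,\, x_k + r_k \eta\bigr)
$$
on the rescaled domain $\Omega_k := r_k^{-1}(\Omega - x_k)$. A direct change of variables shows that $G_k$ is the Dirichlet biharmonic Green's function on $\Omega_k$, and the hypothesis reads $G_k(0, \eta_k) \le 0$. The two-sided estimate \eqref{twosidedestimate}, transported to $\Omega_k$, provides uniform bounds on $G_k$ on compact subsets off the diagonal, while interior and boundary Schauder estimates for $\Delta^2$ (using $\Delta^2 G_k = 0$ away from the diagonal and the homogeneous Dirichlet conditions) supply all needed derivative bounds. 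Up to a subsequence, $G_k \to G_\infty$ in $C^4_{\mathrm{loc}}$ off the diagonal, with $G_\infty$ the Dirichlet biharmonic Green's function on $\Omega_\infty \in \{\mathbb{R}^n, \mathbb{R}^n_+\}$, and the inequality passes to the limit: $G_\infty(0, \eta_\infty) \le 0$ at unit distance.

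The desired contradiction comes from positivity of $G_\infty(0, \eta_\infty)$. In the interior regime with $n \ge 4$, $G_\infty$ is the positive fundamental solution (either $c_n|\xi-\eta|^{4-n}$ or $c_4 \log(1/|\xi-\eta|)$), strictly positive at unit distance. In the boundary regime, positivity of the Dirichlet biharmonic Green's function on $\mathbb{R}^n_+$ can be extracted from Boggio's formula by taking a limit of balls tangent to $\partial\mathbb{R}^n_+$ with radii tending to infinity. I expect the main obstacle to be the borderline dimension $n = 3$ in the interior regime: the whole-space fundamental solution $-|\xi-\eta|/(8\pi)$ is itself negative, so the leading singular part gives no sign information, and the argument must extract a strictly positive contribution of the regular part of $H_\Omega$ at $(x_\infty, x_\infty)$. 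This presumably requires a refined renormalisation that captures the next order of the expansion, or an independent argument — via a suitable boundary integral representation — that $H_\Omega(x,x) > 0$ uniformly on compact subsets of the interior.

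Once the uniform gap $\delta = \delta(\Omega) > 0$ is established, the remaining conclusions are routine. Continuity of $H_\Omega$ on the compact set $\{(x,y) \in \overline{\Omega}^2 : |x-y| \ge \delta\}$ — the only region where $H_\Omega$ can be non-positive — yields \eqref{boundfrombelow0}. For \eqref{boundfrombelow}, I insert $|x-y| \ge \delta$ into \eqref{twosidedestimate}: in each dimension the estimate is bounded by $C(\Omega,\delta)\, d(x)^2 d(y)^2$ on the region in question (for $n > 4$ by combining $|x-y|^{4-n} \le \delta^{4-n}$ with $\min\{1, d(x)^2 d(y)^2/|x-y|^4\} \le \delta^{-4} d(x)^2 d(y)^2$; for $n = 4$ via $\log(1+s) \le s$; for $n = 3$ by multiplying the two factors $d(x)^{1/2}d(y)^{1/2}$ and $d(x)^{3/2}d(y)^{3/2}/|x-y|^3$ and using $|x-y| \ge \delta$). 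Applied to the negative part, this gives \eqref{boundfrombelow}.
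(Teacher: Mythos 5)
Your high-level strategy — contradiction, blow-up at the scale $r_k = |x_k - y_k|$, separation into interior and boundary regimes, and positivity of the limiting half-space Green's function from Boggio's formula — is indeed the one the paper follows (via the more general Theorem~\ref{theorem2}). But the proposal has genuine gaps precisely where the paper has to work hardest, namely in the low dimensions $n=3,4$.

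First, your claim for the interior regime with $n=4$ is simply false: the value of $c_4\log(1/|\xi-\eta|)$ at unit distance is zero, not ``strictly positive.'' Worse, the rescaled sequence $G_k(\xi,\eta)=H_\Omega(x_k+r_k\xi,x_k+r_k\eta)$ does not converge at all in the interior regime when $n=4$, because the singular part $-\frac{1}{16e_4}\log|x-y|$ contributes an additive term $-\frac{1}{16e_4}\log r_k\to+\infty$, while the regular part $h_\Omega(x_k+r_k\xi,x_k+r_k\eta)$ stays bounded near an interior limit point; the Dall'Acqua--Sweers bound transported to the rescaled domain also blows up since $d_k/r_k\to\infty$. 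So in dimension four the blow-up at scale $r_k$ yields nothing — one must instead argue directly that $H_\Omega(x,y)\to+\infty$ near the diagonal, uniformly on compacts of $\Omega$, but that is not what a naive blow-up gives. For $n=3$ you explicitly acknowledge the gap and only describe what ``presumably'' needs to be done. The paper closes both gaps using an ingredient your proposal never identifies: the Nehari-type local positivity results of Grunau--Sweers \cite{GrunauSweers3} (and Nehari \cite{Nehari}), which provide uniform lower bounds $H_\Omega(x,y)\ge -\tfrac{1}{c}\log\bigl(|x-y|/d(x,\partial\Omega)\bigr)$ for $n=4$ and $H_\Omega(x,y)\ge c\,d(x,\partial\Omega)$ for $n=3$ whenever $|x-y|\le\delta\,d(x,\partial\Omega)$. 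For the $n=3$ interior regime the paper also uses an energy/coercivity argument to obtain positivity on the diagonal. These are the decisive steps in low dimensions; without something of that strength your interior-regime argument breaks down for $n=3,4$.

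Second, in the boundary regime you state that the blow-up limit is the Dirichlet biharmonic Green's function on the half-space as though this were automatic. Identifying the limit is nontrivial: one must show that the (suitably rotated and dilated) limit $\bar G$ coincides with Boggio's half-space kernel $H$, and the paper does this (Lemma~\ref{lemma3.5}) via a Liouville argument for the entire biharmonic extension of $\psi=\bar G-H$. For $n>4$ this needs the decay $|\psi|\le C|y|^{4-n}$ and derivative estimates obtained by rescaling; for $n=3,4$ it needs additional gradient decay (which in turn requires the uniform gradient bounds \eqref{4.101} for the Green's functions). Your sketch does not address this. You also need to handle the case where the blow-up zero sits on the boundary of the half-space, requiring Taylor expansions to produce zeros of $\Delta_xH$, $\Delta_yH$, or $\Delta_x\Delta_yH$, which are then excluded by the explicit signs in Boggio's formula. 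Finally, your derivation of \eqref{boundfrombelow0} and \eqref{boundfrombelow} from the gap $\delta$ and the Dall'Acqua--Sweers estimate \eqref{twosidedestimate} is correct and matches the paper.
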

In other words: Around the pole, biharmonic Green's functions are always
positive, if $n\ge 3$. And this behaviour is uniform, even if the pole approaches the boundary.

The proof of Theorem~\ref{smallnegativepart} indicates that one may not expect the full 
result to hold true also for $n=2$. The bound \eqref{boundfrombelow},
however, was proved for the case $n=2$ and sufficiently smooth domains
by Dall'Acqua, Meister
and Sweers \cite{DallacquaMeisterSweers}. Even for $n=2,3$, where the
Green's function is bounded, (\ref{boundfrombelow}) is a strong statement
because in the case, where $x$ or $y$ is closer to the boundary than
they are to each other, (\ref{twosidedestimate}) would only give
$
H_{\Omega}\ge -C \frac{d(x)^2d(y)^2}{|x-y|^n}.
$
In this sense, (\ref{boundfrombelow}) gains a factor of order $|x-y|^n$.

In his counterexample to positivity mentioned above,
Garabedian \cite{Garabedian} found in a mildly eccentric ellipse $\Omega$  opposite boundary
points $x_0,y_0\in\partial\Omega$ with $\Delta_x\Delta_y H_{\Omega}(x_0,y_0)<0$.
This shows that qualitatively, the estimate (\ref{boundfrombelow}) is sharp.

\medskip Another consequence of Theorem \ref{smallnegativepart} is that one has a 
strong control of the negative part of solutions $u$ of the clamped plate
boundary value problem (\ref{cpe}) with nonnegative datum $f$
irrespective of the space dimension:
$$
f\ge 0\quad  \Rightarrow \quad
\| u^-\|_{L^\infty(\Omega)}\le C(\Omega)
\| f\| _{L^1 (\Omega)}.
$$
This  estimate should be compared with the estimate for the full function: 
It follows from general elliptic theory \cite{ADN} that for all $p>\frac{n}{4}$, there exists $C(p,\Omega)>0$ such that for solutions $u$ of the clamped plate
boundary value problem (\ref{cpe}) with datum $f$, we have that
$$\Vert u\Vert_{L^\infty(\Omega)}\leq C(p,\Omega)\Vert f\Vert _{L^p(\Omega)}.$$
Consequently, on the one hand, there might be a negative part for $u$, even if $f\ge 0$.
But on the other hand, this negative part enjoys a nice strong control by a very weak norm. 
Therefore, although one has a lack of positivity in general,  in this sense it is ``small''.

\bigskip
Let us now turn to Question (2): Are there  at least families of domains, 
different from balls, where the biharmonic Green's functions
under Dirichlet boundary conditions  are
positive?
For two dimensional domains, this question was addressed in \cite{GrunauSweers1}.
There, it was shown that in domains $\Omega\subset\mathbb{R}^2$ being
sufficiently close in $C^4$-sense
to the (unit) disk $B\subset\mathbb{R}^2$, the biharmonic
Green's  function (under Dirichlet boundary conditions) is positive.
Recently, Sassone \cite{Sassone} could relax the assumption on the domains
to be close to $B$ in $C^{2,\alpha}$-sense. The authors could take advantage
of conformal maps and the Riemann mapping theorem, pulling back
the clamped plate boundary value problem to Dirichlet problems in the
unit disk with the biharmonic operator as principal part and only
with (small) \emph{lower order perturbations}. The latter were
treated in $B\subset\mathbb{R}^n$ ($n$ arbitrary) in \cite{GrunauSweers2}.
The methods of \cite{GrunauSweers1}, however, do not carry over
to higher dimensions due to a lack of sufficiently many conformal maps.
So, the question, whether the positivity of the biharmonic Green's  function
in the unit ball $B\subset\mathbb{R}^n$ is stable under
\emph{domain perturbations}, was left open.

This question  is addressed  in our next result. Assuming
$n>2$, we show that in domains $\Omega\subset\mathbb{R}^n$, which are
sufficiently close to the unit ball in a suitable
$C^{4,\alpha}$-sense, the biharmonic Green's  function under Dirichlet boundary conditions
is indeed positive.
More precisely, we prove the following theorem, where $Id$ denotes the 
identity map:

\begin{theorem} \label{theorem1}
Let $B$ be a unit ball of $\rn$, $n\ge 3$. Then, there exists $\varepsilon_0=\varepsilon_0(n)>0$
such that the following holds true:

We assume that $\Omega\subset\mathbb{R}^n$ is a
$C^{4,\alpha}$-smooth  domain which is $\varepsilon_0$-close to the
ball $B$ in the $C^{4,\alpha}$-sense, i.e.:
\begin{quote}
There exists a surjective $C^{4,\alpha}$-map $\psi:\overline{B} \to\overline{\Omega}$
such that \\
$\| Id -\psi\|_{C^{4,\alpha}\left( \overline{B}\right)}\le\varepsilon_0$.
\end{quote}
Then,  the Green's  function $H_{\Omega}$ for $\Delta^2$
in $\Omega$ under  Dirichlet boundary conditions is strictly positive:
$$
\forall x,y \in \Omega,\quad x\not=y:\qquad
H_{\Omega} (x,y) >0.
$$
\end{theorem}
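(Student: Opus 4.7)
The plan is to argue by contradiction, combining Theorem~\ref{smallnegativepart} with a compactness/blow-up argument and Boggio's explicit positive formula for $H_B$. Assume the conclusion fails. Then there exist sequences $\varepsilon_k\to 0^+$, surjective $C^{4,\alpha}$-maps $\psi_k:\overline{B}\to\overline{\Omega_k}$ with $\|Id-\psi_k\|_{C^{4,\alpha}(\overline{B})}\le\varepsilon_k$, and pairs $x_k\neq y_k$ in $\Omega_k$ with $H_{\Omega_k}(x_k,y_k)\le 0$. Applying Theorem~\ref{smallnegativepart} to each $\Omega_k$, and verifying that the constant $\delta(\Omega)$ appearing there can be chosen uniformly on our family (which, since the proof of Theorem~\ref{smallnegativepart} is based on a scale- and translation-invariant blow-up analysis, should follow from the $C^{4,\alpha}$-convergence $\Omega_k\to B$), I obtain $|x_k-y_k|\ge \delta_0>0$ independently of $k$. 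Setting $\tilde x_k:=\psi_k^{-1}(x_k)$, $\tilde y_k:=\psi_k^{-1}(y_k)\in\overline{B}$, passing to a subsequence yields limits $\tilde x_k\to x_\infty$, $\tilde y_k\to y_\infty$ in $\overline{B}$ with $|x_\infty-y_\infty|\ge\delta_0/2$; in particular $x_\infty\neq y_\infty$.

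\textbf{Step 2 (normalized Green's function).} Define
\[
G_k(x,y):=\frac{H_{\Omega_k}(x,y)}{d_{\Omega_k}(x)^2\,d_{\Omega_k}(y)^2},\qquad x,y\in\Omega_k,\ x\neq y,
\]
where $d_{\Omega_k}(x):=\mathrm{dist}(x,\partial\Omega_k)$. On the region $\{|x-y|\ge\delta_0/2\}$, the Dall'Acqua--Sweers upper bound \eqref{twosidedestimate} gives $G_k\le C$ while the refined lower bound \eqref{boundfrombelow} of Theorem~\ref{smallnegativepart} gives $G_k\ge -C$, both with constants that are uniform on our family because $\Omega_k\to B$ in $C^{4,\alpha}$. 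Hence $|G_k|\le M$ on that region, uniformly in $k$.

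\textbf{Step 3 (passing to the limit).} By Boggio's explicit formula, the function $G_B(u,v):=H_B(u,v)/(d_B(u)^2 d_B(v)^2)$, defined on $B\times B\setminus\mathrm{diag}$, extends to a continuous and strictly positive function on $\{(u,v)\in\overline{B}\times\overline{B}:u\neq v\}$; indeed, for $|u-v|\ge\delta_0/2$ Boggio's formula yields $H_B(u,v)\asymp(1-|u|^2)^2(1-|v|^2)^2|u-v|^{-n}$ as $u,v$ approach $\partial B$. Pulling back $G_k$ to $\overline{B}\times\overline{B}$ via $(u,v)\mapsto(\psi_k(u),\psi_k(v))$ and invoking standard boundary Schauder estimates for $\Delta^2$ under Dirichlet boundary conditions, one extracts a subsequence along which $G_k$ converges uniformly on $\{(u,v)\in\overline{B}\times\overline{B}: |u-v|\ge\delta_0/2\}$ to $G_B$. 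Evaluating at $(\tilde x_k,\tilde y_k)\to(x_\infty,y_\infty)$ then yields $G_k(x_k,y_k)\to G_B(x_\infty,y_\infty)>0$, contradicting $G_k(x_k,y_k)\le 0$.

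\textbf{Main obstacle.} The principal technical difficulty lies in Step~3: establishing the uniform convergence of the $G_k$ up to the boundary and the continuity of $G_B$ across $\partial B$. This requires boundary Schauder (Agmon--Douglis--Nirenberg type) estimates for the biharmonic operator that are uniform along the family $\{\Omega_k\}$, together with a careful factorization of $H_{\Omega_k}(x,y)$ as $d_{\Omega_k}(x)^2 d_{\Omega_k}(y)^2$ times a boundary-continuous remainder. A subsidiary but nontrivial point is the uniformity of the separation constant $\delta_0$ in Theorem~\ref{smallnegativepart} over $C^{4,\alpha}$-small perturbations of $B$ (Step~1); once these uniformities are in hand, Boggio's explicit formula does the rest.
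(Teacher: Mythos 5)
Your overall strategy — argue by contradiction, produce a sequence $\Omega_k\to B$ with sign-changing Green's functions, show the zeros of $H_{\Omega_k}$ stay uniformly separated, and then contradict Boggio's strict positivity — is the right one and matches the paper's in spirit. But your execution bypasses the paper's Theorem~\ref{theorem2}, which is precisely the tool that supplies the two uniformities you flag as ``should follow,'' and those handwaved steps are not small: they are the technical heart of the matter.

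In Step~1 you invoke Theorem~\ref{smallnegativepart} with a uniform constant $\delta_0$ over the family $\{\Omega_k\}$. Theorem~\ref{smallnegativepart} gives $\delta=\delta(\Omega)$ for a single domain; asserting uniformity over a $C^{4,\alpha}$-converging family is exactly the content of Theorem~\ref{theorem2} (applied with $a_k=0$), and there is no shortcut: the paper's proof of Theorem~\ref{smallnegativepart} itself proceeds \emph{through} Theorem~\ref{theorem2}, so trying to bootstrap from Theorem~\ref{smallnegativepart} to the family is circular unless you re-run the blow-up analysis of Theorem~\ref{theorem2} anyway. Similarly, the uniform constants you need in Step~2 from \eqref{twosidedestimate} and \eqref{boundfrombelow} have the same source.

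In Step~3, the normalization $G_k=H_{\Omega_k}/(d^2\,d^2)$ followed by a claim of uniform convergence up to the boundary is a genuinely heavier route than the paper's. The quotient is not the solution of a boundary value problem, so Schauder estimates do not apply to $G_k$ directly; one must first establish $C^4$-convergence of $H_{\Omega_k}$ up to $\partial\Omega_k$ in \emph{both} variables simultaneously in local charts (the paper's Proposition~\ref{proposition3.1}), and then handle the division by $d^2$ near the boundary via a Taylor-expansion/L'H\^opital-type argument. The paper avoids the division altogether: it Taylor-expands $H_{\Omega_k}$ at zeros approaching $\partial\Omega$ (Lemmas~\ref{lemma3.2} and \ref{lemma3.4}) and concludes that a boundary degeneracy of one of the four types (i)--(iv) of Theorem~\ref{theorem2} must occur for $H_B$, while Boggio's formula (Lemma~\ref{lemma3.6} and its analogue in the ball) rules out each of these. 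Your claim that $G_B$ extends continuously and strictly positively to the closed diagonal-free set encodes the same facts ($\Delta_y H_B>0$ on the boundary for interior $x$, $\Delta_x\Delta_y H_B>0$ for both on the boundary), but again states them rather than proves them.

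In short: the strategy is sound and in the same spirit as the paper, but the two uniformities you defer and the boundary-continuity claim in Step~3 are precisely the content of Theorem~\ref{theorem2} and Propositions~\ref{proposition3}--\ref{proposition3.1}; without proving these, the argument has a gap. Applying Theorem~\ref{theorem2} directly with $a_k\equiv 0$ to $\Omega_k\to B$ and checking that none of alternatives (i)--(iv) is compatible with Boggio's formula is both simpler and closes the gap, and is what the paper does.
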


Assuming $\varepsilon_0$ small enough, this notion of closeness
implies  that there is a fixed neighbourhood
$U$ of $\overline{B}$, $C^{4,\alpha}$-smooth injective extensions
$\psi:U\to\mathbb{R}^n$, $\| Id
-\psi\|_{C^{4,\alpha}\left( U\right)}=O(\varepsilon)$
and $C^{4,\alpha}$-smooth  inverse maps
$\phi=\psi^{-1}:\psi(U)\to U$ such that
$\psi\left( \overline{B}\right) =\overline{\Omega}$, $\psi\left(
{B}\right)={\Omega}$.

\medskip  For $n=2$,  a direct and explicit proof based on perturbation series, Green's  function
estimates and conformal maps was given
in \cite{GrunauSweers1,GrunauSweers2}. This means that there,
in principle, $\varepsilon_0$ may be calculated explicitly.
Moreover, in the case $n=2$, closeness has to be assumed only in a
weaker norm, see \cite{Sassone}.

\medskip  Here, the proof is somehow more indirect since a number of proofs
by contradiction is involved so that it will be  impossible  to calculate
$\varepsilon_0$ {from} our proofs. Furthermore, we have to make
extensive use of general elliptic theory as provided by Agmon,
Douglis and Nirenberg \cite{ADN}. We emphasize that
Theorem~\ref{theorem1} is by no means just a continuous dependence
on data result, since the involved Green's ``functions'' are not
simply functions but families of functions depending on the
position of the singularity.

\medskip  The problem consists in gaining
uniformity with respect to the position of the singularity: When the singularity is in the interior, 
it is possible to use the local positivity results of Grunau-Sweers \cite{GrunauSweers3}. But when the singularity approaches the boundary, the situation becomes more intricate and we develop a 
new technique for this problem. These remarks
apply also to Theorem~\ref{smallnegativepart} which is proved by means
of the same methods.

\medskip  It remains as an interesting question to find out an optimal notion
of closeness for a result like Theorem~\ref{theorem1} to hold true.
One might expect that   like in the two-dimensional 
case (see\cite{Sassone}) $C^{2,\alpha}$-closeness could
suffice: Does the
boundary curvature govern the positivity behaviour of
biharmonic Green's functions? However, such a result
would require new ideas and techniques; its possible
proof would certainly be much more technical
than ours.  Sassone's approach is strictly
two-dimensional because of his use of conformal maps.

\medskip  Our methods and techniques  are general and 
our results can be extended to more general
fourth order ``positive definite self adjoint'' elliptic operators
under Dirichlet boundary conditions, where the principal part
is a square of second order elliptic operators, and also to 
similar elliptic operators of higher order $2m$ 
in dimensions $n\ge 2m-1$ with reasonable boundary conditions of the type discussed in Agmon-Douglis-Nirenberg \cite{ADN}.  



\section{A more general result}\label{section0}

In order to prove Theorem~\ref{theorem1},
below in  Theorem~\ref{theorem2} we describe the possible situations
how transition from positivity to sign change may occur within a smooth
family of domains. It is then easy to see that none of these situations occurs
in the (unit) ball in $\mathbb{R}^n$, $n>2$. Moreover, a special case
of Theorem~\ref{theorem2} will directly yield the proof of
 Theorem~\ref{smallnegativepart}.

To provide a more flexible
result in Theorem~\ref{theorem2}, we will also include lower order perturbations.
The formulation is somehow technical and requires in particular the notion
of smooth domain perturbations, which we make precise in the following definition.

\begin{definition}\label{definition1}
Let $\Omega$, $(\Omega_k)_{k\in\nn}$ be  domains of $\rn$. We say
that $(\Omega_k)_{k\in\nn}$ is a \emph{$C^{4,\alpha}$-smooth
perturbation} of the bounded $C^{4,\alpha}$-smooth  domain $\Omega$, and
we write
$$\lim_{k\to +\infty}\Omega_k=\Omega$$
if the following facts are satisfied:
\begin{itemize}
\item[(i)]  There exist $N\in\nn$,  $p_1,\dots,p_N\in\partial\Omega$,
$\delta>0$ and open subsets $\omega\subset\subset\Omega$,
$\omega\subset\subset\omega_0\subset\subset\Omega_k$ such that
$$\Omega\subset \omega\cup\bigcup_{i=1}^NB_\delta(p_i);
\qquad
\Omega_k\subset \omega\cup\bigcup_{i=1}^NB_\delta(p_i);
$$
\item[(ii)] for any $i\in\{1,\dots,N\}$, there exists  an open subset of
$U_i\subset \rn$ such that
$0\in U_i$, and  a $C^{4,\alpha}$-smooth diffeomorphism
$\Phi_i: U_i\to B_{2\delta}(p_i)$
such that $\Phi_i(0)=p_i$ and
$$
\Phi_i(U_i\cap\{x_1<0\})=\Phi_i(U_i)\cap\Omega\; ,\;
 \Phi_i(U_i\cap\{x_1=0\})=\Phi_i(U_i)\cap\partial\Omega;
$$
\item[(iii)] for any $i\in\{1,\dots,N\}$ and $k\in\nn$, there
exists $\phiki: U_i\to B_{2\delta}(p_i)$ such that $\phiki(U_i)$
is an open subset, $B_{3\delta/2}(p_i)\subset \phiki(U_i)$ and
$\phiki : U_i\to \phiki(U_i)$ is a $C^{4,\alpha}$-smooth
diffeomorphism and
$$\phiki(U_i\cap\{x_1<0\})=\phiki(U_i)\cap\Omega_k\; ,\;
 \phiki(U_i\cap\{x_1=0\})=\phiki(U_i)\cap\partial\Omega_k;
$$
\item[(iv)] for any $i\in\{1,\dots,N\}$, $\lim_{k\to +\infty}\phiki=\Phi_i$ in
$C^{4,\alpha}_{loc} (U_i)$.
\end{itemize}
\end{definition}

This definition implies that we have a  well defined smooth exterior normal
vector field so that $\Omega$,  $\Omega_k$, $\partial \Omega$
and $\partial \Omega_k$ carry a canonical orientation.
In what follows, the local charts will be chosen such that this orientation is
observed, i.e. such that $\hbox{Jac }\Phi_i \circ\Phi_j^{-1}>0$,
$\hbox{Jac }\Phi_{k,i} \circ\Phi_{k,j}^{-1}>0$.

\medskip This definition covers in particular the following
more special situation of smooth domain perturbation,
which we make use of in proving Theorem~\ref{theorem1}:
Let a sequence of mappings $(\psi_k)_{k\in\nn}$ be
such that there exists  an open subset of $U\subset \rn$ and $\psi_k:U\to\rn$ for all $k\in\nn$.
We assume that $\lim_{k\to +\infty}\psi_k=Id$ in $C^{4,\alpha}_{loc}(U)$.
Let $\Omega\subset\subset U$ be a $C^{4,\alpha}$-smooth bounded  subset of $\rn$
and let $\Omega_k:=\psi_k(\Omega)$ for all $k\in\nn$. Then the sequence
$(\Omega_k)_{k\in\nn}$ is a smooth perturbation of $\Omega$.

Employing  the notion of smooth domain perturbation we are now able to
formulate our key result (where Theorems~\ref{smallnegativepart} and \ref{theorem1} 
are a  consequence of as it is explained at the end of Section~\ref{section3}):

\begin{theorem} \label{theorem2}
Let  $n\ge 3$, and $(\Omega_k)_{k\in\nn}$ be a
$C^{4,\alpha}$-smooth perturbation of the bounded
$C^{4,\alpha}$-smooth  domain $\Omega$ in the sense of
Definition~\ref{definition1}. We consider a sequence
$(a_k)_{k\in\nn}\in C^{0,\alpha}(U_0)$, where
$\Omega\subset\subset U_0$ and assume that there exists
$a_\infty\in C^{0,\alpha}(U_0)$ such that
$$\lim_{k\to +\infty} a_k=a_\infty\hbox{ in }C^{0,\alpha}_{loc}(U_0).$$
We assume further that there exists $\lambda>0$ such that
\begin{equation}\label{uniformcoercive}
\int_{\Omega_k}\left((\Delta\varphi)^2+a_k\varphi^2\right)\, dx
\geq \lambda\int_{\Omega_k}\varphi^2\, dx
\end{equation}
for all $\varphi\in C^\infty_c(\Omega_k)$ and all $k\in\nn$. Let $G_k$ be the Green's  function of
$\Delta^2+a_k$ on $\Omega_k$, and $G$ be the
Green's  function of $\Delta^2+a_\infty$ on $\Omega$, all with Dirichlet boundary conditions.

Finally, we suppose
 that there exist two sequences $(x_k)_{k\in\nn},(y_k)_{k\in\nn}$ such
that $x_k,y_k\in\Omega_k$ and
$$G_k(x_k,y_k)=0\hbox{ for all }k\in\nn.$$
Up to a subsequence, let $x_\infty:=\lim_{k\to +\infty}x_k$ and $y_\infty:=\lim_{k\to +\infty}y_k$.
Then $x_\infty,y_\infty\in\overline{\Omega}$, $x_\infty\neq y_\infty$ and we are in one of the
following situations:
\begin{itemize}
\item[(i)] if $x_\infty,y_\infty\in\Omega$, then $G(x_\infty,y_\infty)=0$;
\item[(ii)]
if $x_\infty\in\Omega$ and $y_\infty\in\partial\Omega$, then $\Delta_y G(x_\infty,y_\infty)=0$;
\item[(iii)]
if $x_\infty\in\partial\Omega$ and $y_\infty\in\Omega$, then $\Delta_x G(x_\infty,y_\infty)=0$;
\item[(iv)]
if $x_\infty,y_\infty\in\partial\Omega$, then $\Delta_x\Delta_y G(x_\infty,y_\infty)=0.$
\end{itemize}
\end{theorem}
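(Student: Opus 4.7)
The plan has two ingredients. First, convergence of $G_k$ to $G$ on the complement of the diagonal in $\overline\Omega\times\overline\Omega$, using Agmon--Douglis--Nirenberg theory together with the uniform coercivity \eqref{uniformcoercive}; second, a Dirichlet--Taylor expansion near $\partial\Omega$ which translates the single equation $G_k(x_k,y_k)=0$ into the four alternative limiting equations of cases (i)--(iv) depending on where $(x_\infty,y_\infty)$ lies. The main obstacle is to exclude $x_\infty=y_\infty$; this requires a blow-up argument and the explicit positivity of the biharmonic Green's function on balls and half-spaces.

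For the convergence step, I would pull each $G_k(\cdot,y_k)$ back through the boundary charts $\Phi_{k,i}^{-1}$ onto the fixed reference domain $\Omega$. Definition~\ref{definition1}(iv) together with $a_k\to a_\infty$ in $C^{0,\alpha}_{loc}$ produces an elliptic equation on $\Omega$ with $C^{0,\alpha}$-converging coefficients. The uniform coercivity of $\Delta^2+a_k$ rules out resonances, and Krasovski\u{\i}'s estimates \cite{Krasovskij1,Krasovskij2} yield uniform pointwise bounds on $|G_k|$ away from the singularity. A standard Schauder bootstrap up to the boundary then produces $G_k\to G$ in $C^{4,\alpha}_{loc}$ on $\overline\Omega\times\overline\Omega$ minus the diagonal. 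Granting for the moment that $x_\infty\neq y_\infty$, the Dirichlet conditions on $G_k(x_k,\cdot)$ give, near the nearest-boundary projection $\bar y_k$ of $y_k$,
\[
G_k(x_k,y_k)=\tfrac{1}{2}\Delta_y G_k(x_k,\bar y_k)\,d(y_k,\partial\Omega_k)^2+O\!\left(d(y_k,\partial\Omega_k)^3\right),
\]
with the analogous expansion in $x$. Dividing $G_k(x_k,y_k)=0$ by the appropriate power of boundary distance and passing to the limit then reads off, case by case, $G(x_\infty,y_\infty)=0$, $\Delta_y G(x_\infty,y_\infty)=0$, $\Delta_x G(x_\infty,y_\infty)=0$, or $\Delta_x\Delta_y G(x_\infty,y_\infty)=0$.

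To rule out $x_\infty=y_\infty=p$, set $\mu_k=|x_k-y_k|\to 0$ and consider the rescaled functions
\[
\hat G_k(\xi,\eta):=\mu_k^{n-4}G_k(x_k+\mu_k\xi,\,x_k+\mu_k\eta)
\]
on $\hat\Omega_k:=(\Omega_k-x_k)/\mu_k$. After rescaling, the operator $\mu_k^4(\Delta^2+a_k)$ becomes $\Delta^2+O(\mu_k^4)$, so the lower-order perturbation is washed out; up to rotation, $\hat\Omega_k$ converges either to $\mathbb{R}^n$ (when $d(x_k,\partial\Omega_k)/\mu_k\to\infty$) or to a half-space. Rescaled Agmon--Douglis--Nirenberg estimates give uniform bounds on $\hat G_k$ off its singularity, so $\hat G_k$ converges to the biharmonic Green's function on the limit domain, which is strictly positive by Boggio's formula or its half-space analogue. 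Evaluating the limit at $\lim(y_k-x_k)/\mu_k$, which lies on the unit sphere, gives a strictly positive value, contradicting $\hat G_k(0,(y_k-x_k)/\mu_k)=\mu_k^{n-4}G_k(x_k,y_k)=0$. In dimensions $n=3,4$, where the free-space fundamental solution is not sign-definite, the interior sub-case has to be handled instead by invoking the local positivity result of \cite{GrunauSweers3} directly at scale $\mu_k$, while the boundary sub-case still reduces to the positive half-space Green's function; fitting together these rescalings uniformly in $k$ is the main technical content of the proof.
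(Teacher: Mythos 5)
Your overall strategy — convergence of $G_k \to G$ via ADN estimates, boundary Taylor expansion for cases (ii)--(iv), and a blow-up at scale $\mu_k = |x_k - y_k|$ to exclude $x_\infty = y_\infty$ — is essentially the paper's route. However, your sketch glosses over steps that are the real content of the proof, and one of them would actually fail as stated.

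First, for the uniform bound $|G_k(x,y)| \le C|x-y|^{4-n}$ (resp.\ logarithmic, resp.\ constant for $n=4,3$) you cite Krasovski\u{\i}. His estimates assume uniform $C^{11}$-smoothness of the boundary; the hypothesis here is only $C^{4,\alpha}$, and the paper must establish these bounds independently (Theorem~\ref{theorem3}), precisely via a blow-up plus duality argument. Since your whole rescaling chain rests on these uniform bounds, you cannot simply invoke the reference. Second, you assert that $\hat G_k$ converges to ``the biharmonic Green's function on the limit domain.'' Showing that the limit coincides with Boggio's half-space Green's function $H$ requires a uniqueness statement within your growth class, which is not automatic. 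The paper's Lemma~\ref{lemma3.5} proves it by showing the difference $\tilde{G}-H$ has controlled decay, extending it to an entire biharmonic function on $\mathbb{R}^n$ via the Duffin--Huber reflection formula, and then applying Liouville's theorem for biharmonic functions. Nothing in your proposal plays this role.

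Third — and this is the step that fails outright — you write that ``evaluating the limit at $\lim(y_k-x_k)/\mu_k$ gives a strictly positive value.'' That is true only when both rescaled points converge into the interior of the half-space. When $d(x_k,\partial\Omega_k)/\mu_k$ or $d(y_k,\partial\Omega_k)/\mu_k$ tends to $0$, the rescaled limit points land on $\partial\mathbb{R}^n_-$, where $H$ and its first normal derivative vanish by the Dirichlet conditions; then $H(\tilde x,\tilde y)=0$, and mere positivity of $H$ gives no contradiction. The paper handles this via a case distinction (its cases (a)--(d)), pairing a nested boundary Taylor expansion of $\tilde G_k$ with the \emph{strict} positivity of $\Delta_y H$, $\Delta_x H$, and $\Delta_x\Delta_y H$ at half-space boundary points (Lemma~\ref{lemma3.6}). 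You need this higher-order Boggio positivity and the corresponding Taylor argument to close the contradiction when the blow-up limits sit on the boundary.
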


 In the above statement,  $\Delta_x G$ denotes the
Laplacian with respect to the first variables, and $\Delta_y G$ denotes the Laplacian with respect to
the second variables.
The uniform coercivity assumption \eqref{uniformcoercive} is e.g. implied
by a sign condition like $a_k \ge 0$ and uniform smoothness of the domains 
to have a uniform Poincar\'{e}-Friedrichs inequality.

A result like Theorem~\ref{theorem2} seems to be not known if $n=2$ and we doubt
whether it is correct there.  At least, our method cannot be extended to
this case. The proof of Lemma~\ref{lemma3.3} in dimensions
$n=3,4$ uses a Nehari-type  local positivity result \cite{GrunauSweers3,Nehari} which is not available
for $n=2$.

More general lower order ``self adjoint'' perturbations of the biharmonic operator
may be covered by precisely the same techniques. However, here we prefer to
stick to a relatively simple situation in order to avoid too many technical details.

In the one dimensional context (clamped bars), related and quite concrete results
were obtained by Schr\"oder \cite{SchroederMZ,Schroeder1,Schroeder}.

In order to gain a better feeling for the statement of Theorem~\ref{theorem2}
one should think of deforming the ball, where we know that positivity
preserving holds true, smoothly into a domain where the biharmonic Green's
function is sign changing (e.g. a long thin ellipsoid). There is a ``last''
domain where one still has a nonnegative Green's function. Our result
describes the possible degeneracies of this positivity via which
sign change occurs beyond this ``last positivity-domain''.
The key statement is that $x_\infty\neq y_\infty$ so that
it is impossible that negativity appears through the singularity:
Around the singularity, our Green's functions are always positive.
The most difficult part is to prove this also arbitrarily close 
to the boundary.

Alternatively one may think of the Green's function for $\Delta^2+\lambda$
in a ball for $\lambda \to \infty$ where again, initially one has positivity
while sign changes occur for $\lambda \to \infty$ (see e.g. \cite{Bernis,CoffmanGrover}).
We think that most likely the transition from positivity to sign change
will occur via alternative (iv) of Theorem~\ref{theorem2}.

Throughout the paper we assume that
$$
n\ge 3.
$$

A first essential step in proving Theorem~\ref{theorem2} consists in providing uniform
bounds (in $k$)
for the Green's  functions like
\begin{equation}\label{0.0}
 |G_k (x,y)|\le C \cdot
\left\{\begin{array}{ll}
 |x-y|^{4-n},\quad&\mbox{\ if\ } n>4,\\
\left(1+\left|\log |x-y|\right|\right),\quad&\mbox{\ if\ } n=4,\\
1,\quad&\mbox{\ if\ } n=3.
\end{array}\right.
\end{equation}
Moreover, if $n=3,4$, the somehow irregular estimates for $G_k$ require
to focus first on the  gradients, where estimates like
\begin{equation}\label{0.1}
 |\nabla G_k (x,y)|\le C \cdot
\left\{\begin{array}{ll}
\left| x-y\right|^{-1},\quad&\mbox{\ if\ } n=4,\\
1,\quad&\mbox{\ if\ } n=3,
\end{array}\right.
\end{equation}
are available, which are compatible with the scaling arguments performed below.
In this respect, the proof is more difficult in dimensions $n=3$ and in particular
$n=4$.
Estimates (\ref{0.0}) and (\ref{0.1}) are
due to Krasovski\u{\i} \cite{Krasovskij2}, provided the family $(\Omega_k)$ 
is assumed to be uniformly $C^{11}$-smooth. This assumption is due
to the great generality of the boundary value problems considered
by Krasovski\u{\i}. We prove in Theorem~\ref{theorem3} that  in our special situation,
(\ref{0.0}) and (\ref{0.1}) hold true under our uniform
$C^{4,\alpha}$-smoothness assumptions. 
 Preliminary properties of the Green's  functions are shown in Section~\ref{section1},
while Section~\ref{section2} is devoted to convergence properties of families
of Green's  functions in $(\Omega_k)_{k\in\mathbb{N}}$. The proofs of
Theorems~\ref{theorem2} and, as a consequence, of Theorems~\ref{smallnegativepart}
and \ref{theorem1} are finally given in Section~\ref{section3}.

{\bf Notation.} Straightening the boundary requires to work in
$\mathbb{R}^n_-:=\{x\in \mathbb{R}^n:x_1<0\}$, where
we write $\mathbb{R}^n\ni x=(x_1,\bar{x}).$
$e_n$ denotes the $n$-dimensional volume of $B_1(0)\subset\mathbb{R}^n$.
$C^{\infty}_c (\Omega)$ denotes the space of arbitrarily smooth
functions with compact support in $\Omega$ and ${\mathcal D}'(\Omega)$
its dual, i.e. the space of distributions on $\Omega$.


\section{The Green's  function $G$ for the perturbed biharmonic operator}
\label{section1}

In the first part of this section, we consider a fixed operator $\Delta^2+a$ in
a fixed smooth domain and construct and investigate the corresponding Green's
function.

\begin{proposition} \label{proposition1}
Let $\Omega\subset\mathbb{R}^n$ be a bounded $C^{4,\alpha}$-smooth
 domain and
 $a\in C^{0,\alpha} \left( \overline{\Omega}\right)$. We assume that $\Delta^2+a$
is coercive. Then, for every $x\in \Omega$, there exists a unique Green's  function
$G_x\in L^1(\Omega)\cap C^{4,\alpha}  \left( \overline{\Omega}\setminus\{x\}\right)$ such that
$G_x|_{\partial \Omega}=\frac{\partial G_x}{\partial \nu}|_{\partial \Omega}=0$ and that for all
$\varphi\in C^4\left( \overline{\Omega}\right)$ with $\varphi|_{\partial \Omega}
=\frac{\partial \varphi}{\partial \nu}|_{\partial \Omega}=0$ one has the following
representation formula:
\begin{equation}\label{1.000}
\varphi(x)=\int_{\Omega} G_x(y) \left( \Delta^2 \varphi(y)+ a(y)\varphi(y)\right)\, dy.
\end{equation}
If $R>0$ is such that $\Omega\subset B_R(0)$ and $\lambda>0$ such that
$$\forall \varphi\in W^{2,2}_0 (\Omega):\
\int_{\Omega} \left( (\Delta\varphi)^2+a\varphi^2\right)\, dy \ge
         \lambda\int_{\Omega}\varphi^2\, dy
$$
then, the following estimate for the Green's  function holds true:
\begin{eqnarray}
|G_x (y)| &\le & C(\lambda,R,n,\|a\|_{C^{0,\alpha}},\Omega)\label{1.00}\\
&&\cdot \left\{ \begin{array}{ll}
\left( |x-y|^{4-n} + \max\{d(x,\partial\Omega),
d(y,\partial\Omega)\}^{4-n} \right),\quad&\mbox{\ if\ } n>4,\\
1+\left| \log|x-y| \right|+\left|\log\left(\max\{d(x,\partial\Omega),
d(y,\partial\Omega)\}  \right)\right|,&\mbox{\ if\ } n=4,\\
1,&\mbox{\ if\ } n=3.
\end{array}\right.\nonumber
\end{eqnarray}
If $n=3,4$, we further prove the following gradient estimates:
\begin{eqnarray}
|\nabla_{(x,y)} G_x (y)| &\le & C(\lambda,R,n,\|a\|_{C^{0,\alpha}},\Omega)
\label{1.01}\\
&&\cdot \left\{ \begin{array}{ll}
\left( |x-y|^{-1} + \max\{d(x,\partial\Omega),
d(y,\partial\Omega)\}^{-1} \right),\quad&\mbox{\ if\ } n=4,\\
1,&\mbox{\ if\ } n=3.
\end{array}\right.\nonumber
\end{eqnarray}
The dependence of  the constants
$C$ on $\Omega$ is explicit via the $C^{4,\alpha}$-properties of
$\partial \Omega$.
\end{proposition}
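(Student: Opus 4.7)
The plan is to construct $G_x$ by separating the whole-space singular behaviour from a regular boundary correction, and then derive (\ref{1.00})--(\ref{1.01}) from $L^\infty$/$W^{1,\infty}$ a priori bounds on that correction. Let $F:\mathbb{R}^n\setminus\{0\}\to\mathbb{R}$ denote the radial fundamental solution of $\Delta^2$ on $\mathbb{R}^n$ (the formula recalled in the introduction). For each $x\in\Omega$, I would seek
\begin{equation*}
  G_x(y) = F(y-x) + h_x(y),
\end{equation*}
where $h_x$ is to solve
\begin{equation*}
  (\Delta^2 + a)h_x = -aF(\cdot - x) \text{ in } \Omega, \quad h_x = -F(\cdot-x),\ \partial_\nu h_x = -\partial_\nu F(\cdot-x) \text{ on }\partial\Omega.
\end{equation*}
The right-hand side lies in $L^p(\Omega)$ uniformly in $x$ for every $p < n/(n-4)$ if $n>4$, every $p<\infty$ if $n=4$, and in $L^\infty$ if $n=3$, while the boundary data are $C^{4,\alpha}$-smooth on $\partial\Omega$. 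After lifting the boundary data by a fixed $C^{4,\alpha}$-extension into $\Omega$, the reduced (zero-data) problem is solved by Lax--Milgram applied to the coercive bilinear form $\int(\Delta u\,\Delta v + auv)$ on $W^{2,2}_0(\Omega)$; Agmon--Douglis--Nirenberg regularity up to the boundary then upgrades $h_x$ to $C^{4,\alpha}(\overline{\Omega})$. The representation formula (\ref{1.000}) follows by integrating $\int G_x(\Delta^2+a)\varphi$ by parts on $\Omega\setminus B_\rho(x)$ and sending $\rho\downarrow 0$, using $\Delta^2F(\cdot-x)=\delta_x$; uniqueness of $G_x$ is immediate from coercivity.

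The heart of the proof is the uniform pointwise bound
\begin{equation*}
  \|h_x\|_{L^\infty(\Omega)} \le C\cdot \left\{ \begin{array}{ll} d(x)^{4-n}, & n>4,\\ 1+|\log d(x)|, & n=4,\\ 1, & n=3,\end{array}\right.
\end{equation*}
which, combined with $|F(y-x)|\le C|x-y|^{4-n}$ (with logarithmic/Lipschitz analog for $n=4,3$) and the symmetry $G_x(y)=G_y(x)$ forced by self-adjointness of $\Delta^2+a$, yields (\ref{1.00}). For $y\in\partial\Omega$ one has $|F(y-x)|\le Cd(x)^{4-n}$ and $|\nabla F(y-x)|\le Cd(x)^{3-n}$ (with the analogous logarithmic/bounded versions for $n=4,3$), so the Dirichlet data of $h_x$ are of precisely this size. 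I would obtain the required $L^\infty$-estimate by decomposing $h_x$ into a ``boundary-data'' Poisson-type part plus a ``source'' part: the source contribution is controlled by $L^p$-ADN combined with the Sobolev embedding $W^{4,p}\hookrightarrow L^\infty$ for $p>n/4$ (which is compatible with the RHS integrability directly for $n\le 7$, while for $n\ge 8$ a finite Sobolev bootstrap on the equation $w_x = -\mathcal{K}(aw_x+aF(\cdot-x))$, where $\mathcal{K}$ is the pure biharmonic Dirichlet solution operator, pushes integrability up to $L^\infty$ in finitely many steps); the boundary contribution is controlled by an explicit integral representation via tangential and normal derivatives of the pure biharmonic Green's function $H_\Omega$, whose boundary behaviour is governed by the Krasovski\u{\i} and Dall'Acqua--Sweers estimates (\ref{twosidedestimate}).

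For the gradient estimate (\ref{1.01}) in $n=3,4$, the strategy repeats at one higher derivative: since the RHS is uniformly in $L^p$ for every $p<\infty$, ADN/Sobolev gives a uniform $W^{1,\infty}$-bound on $h_x$ in terms of the $C^1$-norm of the boundary data, which is $O(d(x)^{-1})$ for $n=4$ and bounded for $n=3$. Combined with the explicit bound $|\nabla F(y-x)|\le C|x-y|^{3-n}$ and symmetry, this yields (\ref{1.01}).

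The principal obstacle is deriving a sharp $L^\infty$/$W^{1,\infty}$ estimate for a fourth-order Dirichlet problem whose data are given only in $L^\infty$/$C^1$: naive Schauder estimates would bound $\|h_x\|_{C^{4,\alpha}(\overline\Omega)}$ by $\|F(\cdot-x)\|_{C^{4,\alpha}(\partial\Omega)}$, which blows up as badly as $d(x)^{-(n+\alpha)}$ --- far too weak. Overcoming this requires either direct recourse to $L^\infty$/$C^1$-ADN theory, or building the relevant Poisson kernel explicitly from higher-order derivatives of $H_\Omega$ and exploiting Krasovski\u{\i}-type pointwise estimates on them. A secondary, purely technical task is to track the dependence of all constants explicitly on $\lambda$, $\|a\|_{C^{0,\alpha}}$, $R$, and the $C^{4,\alpha}$-quantitative character of $\partial\Omega$ throughout the construction and bootstrap, as required by the uniform-in-$k$ applications in subsequent sections.
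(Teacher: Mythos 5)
Your overall architecture — write $G_x$ as the free-space fundamental solution plus a correction, and then estimate the correction uniformly in $x$ via elliptic theory — matches the paper's, and you correctly diagnose the principal obstacle: a naive $C^{4,\alpha}$-Schauder estimate for the correction would cost $\sim d(x)^{-(n+\alpha)}$, far too lossy. But you do not actually resolve this; you only name two possible escape routes (``$L^\infty/C^1$-ADN theory'' or ``Poisson kernel estimates via Krasovski\u{\i}/Dall'Acqua--Sweers'') without carrying either out, and it is precisely this step that does all the work. The paper's concrete resolution is the first of your alternatives, executed via the low-order $C^{1,\alpha}$-Schauder estimate for variational Dirichlet problems (ADN Thm.~9.3): because the problem is in variational form, it suffices to measure the boundary data for $u_x$ in $C^{1,\alpha}(\partial\Omega)$ (not $C^{4,\alpha}$), and that norm scales like $d(x)^{3-n-\alpha}$, which after combining with the behaviour near $\partial\Omega$ and symmetry $G_x(y)=G_y(x)$ gives exactly \eqref{1.00}. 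Your second proposed route (bounding the boundary contribution through Krasovski\u{\i}/Dall'Acqua--Sweers kernels) is not carried out and would be methodologically awkward here, since the whole point of Section~3--4 is to reprove such estimates with constants depending only on $C^{4,\alpha}$-data of $\partial\Omega$.

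There is also a structural difference that affects correctness: you write $G_x=F(\cdot-x)+h_x$ directly, and then assert ``ADN regularity up to the boundary then upgrades $h_x$ to $C^{4,\alpha}(\overline\Omega)$''. For $n>4$ this is false: the interior source $-aF(\cdot-x)\sim |y-x|^{4-n}$ is only $L^p$ for $p<n/(n-4)$ and not H\"older near $x$, so $h_x$ is not $C^{4,\alpha}$ near $x$, and the $L^\infty$-Sobolev route $W^{4,p}\hookrightarrow L^\infty$ fails as soon as $n\ge 8$, forcing the finite bootstrap you mention. The paper sidesteps all of this by a finite Neumann series: it sets $G_x=\Gamma_0(x,\cdot)+\sum_{j=1}^{\ell}\Gamma_j(x,\cdot)+u_x$ with $\Gamma_{j+1}(x,y)=-\int_\Omega \Gamma_j(x,z)a(z)\Gamma_0(z,y)\,dz$, and Giraud's lemma shows each iteration gains four orders of regularity, so choosing $\ell>n/4$ makes $\Gamma_\ell(x,\cdot)$ bounded and $-a\Gamma_\ell(x,\cdot)$ uniformly $C^{0,\alpha}$. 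The BVP for $u_x$ then has a bounded H\"older source and the only $x$-dependence that matters is in the boundary data — no bootstrap, and $u_x$ really is in $C^{4,\alpha}(\overline\Omega)$. For $n=3$ this degenerates to your decomposition ($\ell=0$), and for $n=4$ only $\ell=1$ is needed; but for $n>4$ the Neumann series (together with the $C^{1,\alpha}$-ADN estimate) is the ingredient your proposal lacks, and without both pieces the claimed estimates \eqref{1.00}--\eqref{1.01} do not follow.
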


\begin{proof} We first prove extensively the generic case $n>4$. At the end
we comment on the changes and additional arguments which have to be made,
if $n=4$ or $n=3$.

\bigskip\noindent
{\it Case $n>4$.}
We introduce the {fundamental solution} $\Gamma_0$ of the biharmonic operator
\begin{equation}\label{1.0}
\Gamma_0 (x,y):=\frac{1}{2(n-4)(n-2)ne_n}|x-y|^{4-n}
\end{equation}
so that
$\Gamma_0\in C^{\infty}
\left( \overline{\Omega}\times\overline{\Omega} \right)\setminus \{ (x,y): x=y\}$.
We define recursively for $j\ge 0$
$$
\Gamma_{j+1} (x,y):=-\int_{\Omega} \Gamma_{j} (x,z)a(z)\Gamma_0(z,y)\, dz
$$
and have that
$\Gamma_j \in C^{4,\alpha}\left( \overline{\Omega}\times\overline{\Omega}
\setminus \{ (x,y): x=y\}\right)$
is well defined and, according to a Lemma of Giraud \cite{Giraud}, that for $j\ge 1$
\begin{equation}\label{1.1}
\left| \Gamma_{j} (x,y)\right| \le\left\{\begin{array}{ll}
C_j |x-y|^{4(j+1)-n},\quad&\mbox{\ if\ } (j+1)<\frac{n}{4},\\
C_j\left( 1+\left| \log |x-y|  \right| \right),\quad&\mbox{\ if\ } (j+1)=\frac{n}{4},\\
C_j,\quad&\mbox{\ if\ } (j+1)>\frac{n}{4}.
\end{array}\right.
\end{equation}
Here, $C_j=C_j\left(n,R,\| a\|_{\infty}\right)$, where $R>0$ is chosen such that
$\Omega\subset B_R(0)$.
We fix some $\ell>\frac{n}{4}$, $x\in \Omega$ and for
$u_x\in C^{4,\alpha} \left( \overline{\Omega}\right)$
to be suitably determined below, we put
\begin{equation}\label{1.2}
G_x (y) := \Gamma_0(x,y)+\sum^{\ell}_{j=1} \Gamma_j(x,y)+u_x(y).
\end{equation}
One should observe that $\sum^{\infty}_{j=0} \Gamma_j$ is the Neumann-series
for the fundamental solution for the perturbed differential operator.
We have that $G_x\in C^{4,\alpha} \left( \overline{\Omega}\setminus \{ x\}\right)$. 
In order that $G_x$ becomes indeed a Green's  function for the
Dirichlet  problem for
$\Delta^2+a$, i.e. that indeed formula (\ref{1.000}) is satisfied,
we need $u_x$ to be a solution of the following Dirichlet problem
\begin{equation}\label{1.3}
\left\{ \begin{array}{ll}
\Delta^2 u_x(y)+a(y) u_x(y)=-a(y) \Gamma_{\ell} (x,y)\quad&\mbox{\ in\ }\Omega\\
u_x(y)=-\Gamma_0 (x,y) -\sum^{\ell}_{j=1} \Gamma_j (x,y)\quad&\mbox{\ for\ }
                        y\in \partial\Omega,\\
\frac{\partial}{\partial \nu}u_x(y)=
  -\frac{\partial}{\partial \nu_y}\Gamma_0 (x,y)
-\sum^{\ell}_{j=1} \frac{\partial}{\partial \nu_y}\Gamma_j (x,y)\quad&\mbox{\ for\ }
                        y\in \partial\Omega.
\end{array}\right.
\end{equation}
Since $\ell>\frac{n}{4}$, the right hand side $-a\cdot \Gamma_{\ell} (x,\, .\, )$
is H\"older continuous with H\"older norm bounded by
a constant $C(n,R,\|a\|_{C^{0,\alpha}})$.
The $C^{1,\alpha}$-norm of the datum for $u_x|_{\partial \Omega}$
and the $C^{0,\alpha}$-norm of the datum for $\frac{\partial}{\partial \nu}u_x|_{\partial \Omega}$
are bounded by
 $C(n,R,\partial\Omega)   d(x,\partial \Omega)^{3-n-\alpha}$. The dependence of
the constant $C$ on $\partial\Omega$ is in principle constructive and explicit  via its curvature
properties and their derivatives.
According to $C^{1,\alpha}$--estimates for boundary value problems in variational
form like (\ref{1.3}) -- see \cite[Thm. 9.3]{ADN} -- we see that
\begin{equation}\label{1.4}
\| u_x\|_{C^{1,\alpha}\left( \overline{\Omega} \right)}
\le C(n,R,\lambda,\|a\|_{C^{0,\alpha}},\partial\Omega)  d(x,\partial \Omega)^{3-n-\alpha}.
\end{equation}
One should observe that the differential operators are uniformly coercive,
so that no $u_x$-terms need to appear on the right-hand-side.

As long as $d(y,\partial \Omega)\le d(x,\partial \Omega)$, (\ref{1.4}) shows that
$$
| u_x (y) | \le  C(C_0,n,R,\lambda,\|a\|_{C^{0,\alpha}},\partial\Omega)
d(x,\partial \Omega)^{4-n}
$$
and hence
\begin{equation}\label{1.5}
|G_x(y) | \le  C(C_0,n,R,\lambda,\|a\|_{C^{0,\alpha}},\partial\Omega)
\left( |x-y|^{4-n} +d(x,\partial \Omega)^{4-n} \right).
\end{equation}
If $d(y,\partial \Omega) > d(x,\partial \Omega)$ we conclude from (\ref{1.5})
by exploiting the symmetry of the Green's  function:
\begin{equation}\label{1.6}
|G_x(y) | =|G_y(x)| \le  C(C_0,n,R,\lambda,\|a\|_{C^{0,\alpha}},\partial\Omega)
\left( |x-y|^{4-n} +d(y,\partial \Omega)^{4-n} \right).
\end{equation}
Combining (\ref{1.5}) and (\ref{1.6}) yields (\ref{1.00}) for $n>4$.

\bigskip\noindent
{\it Case $n=4$.}
Here the fundamental solution we start with is
\begin{equation}\label{1.61}
\Gamma_0 (x,y):=-\frac{1}{16 e_4}\log |x-y|.
\end{equation}
We proceed with the iterated kernels $\Gamma_j$. In view of the
mild singularity of $\Gamma_0$, however, it is sufficient to choose
$\ell=1$. As above we find that
\begin{equation}\label{1.62}
\| u_x\|_{C^{1,\alpha}\left( \overline{\Omega} \right)}
\le C(n,R,\lambda,\|a\|_{C^{0,\alpha}},\partial\Omega)  d(x,\partial \Omega)^{-1-\alpha}.
\end{equation}
As long as $d(y,\partial \Omega)\le d(x,\partial \Omega)$, (\ref{1.62}) shows that
\begin{equation}\label{1.63}
| \nabla_y G_x (y) | \le  C(C_0,n,R,\lambda,\|a\|_{C^{0,\alpha}},\partial\Omega)
\left( |x-y|^{-1} + d(x,\partial \Omega)^{-1}\right).
\end{equation}
In order to exploit the symmetry of $G_x (y)$ we need a similar estimate also
for $| \nabla_x G_x (y) |$. To this end one has to differentiate (\ref{1.3})
with respect to the parameter (!) $x$ and obtains as before that
for  $d(y,\partial \Omega)\le d(x,\partial \Omega)$
\begin{equation}\label{1.64}
| \nabla_x G_x (y) | \le  C(C_0,n,R,\lambda,\|a\|_{C^{0,\alpha}},\partial\Omega)
\left( |x-y|^{-1} + d(x,\partial \Omega)^{-1}\right).
\end{equation}
By symmetry $G_x (y)=G_y(x)$, and (\ref{1.64}) shows that  for
$d(x,\partial \Omega)\le d(y,\partial \Omega)$, one has
\begin{equation}\label{1.65}
| \nabla_y G_x (y) | \le  C(C_0,n,R,\lambda,\|a\|_{C^{0,\alpha}},\partial\Omega)
\left( |x-y|^{-1} + d(y,\partial \Omega)^{-1}\right)
\end{equation}
while (\ref{1.63})  yields
\begin{equation}\label{1.66}
| \nabla_x G_x (y) | \le  C(C_0,n,R,\lambda,\|a\|_{C^{0,\alpha}},\partial\Omega)
\left( |x-y|^{-1} + d(y,\partial \Omega)^{-1}\right).
\end{equation}
Combining (\ref{1.63})-(\ref{1.66}) proves  (\ref{1.01}) and hence (\ref{1.00}) in the case $n=4$.

\bigskip\noindent
{\it Case $n=3$.} Here, we simply work with the bounded Lipschitz
continuous fundamental solution
\begin{equation}\label{1.68}
\Gamma_0 (x,y):=-\frac{1}{8\pi }|x-y|
\end{equation}
so that no iterative procedure is needed and we may directly work with $\ell=0$.
One comes up with
\begin{equation}\label{1.69}
\| u_x\|_{C^{1,\alpha}\left( \overline{\Omega} \right)}
\le C(R,n,\lambda,\|a\|_{C^{0,\alpha}},\partial\Omega)  d(x,\partial \Omega)^{-\alpha}.
\end{equation}
Proceeding as for $n=4$ yields (\ref{1.01}) and hence (\ref{1.00}) also in the case $n=3$.
\qed\end{proof}

Let us now show that assuming certain uniform estimates on the Green's  functions
$H_k$ for the biharmonic operators on a family $(\Omega_k)$ of domains
according to Definition~\ref{definition1} implies similar uniform estimates
for the Green's  functions of the perturbed biharmonic operators $\Delta^2+a_k$
on $\Omega_k$:

\begin{proposition}\label{proposition2}
Let $n\ge 4$ and $(\Omega_k)_{k\in\mathbb{N}}$ be a
$C^{4,\alpha}$-smooth perturbation of the bounded
$C^{4,\alpha}$-smooth  domain $\Omega$ according to
Definition~\ref{definition1} and $R>0$ such that $\Omega_k\subset
B_R(0)$. Let $H_k\in C^4\left(
\overline{\Omega_k}\times\overline{\Omega_k} \setminus \{ (x,y):
x=y\}\right)$  denote the Green's  functions for $\Delta^2$ in
$\Omega_k$ and assume that there exists a uniform constant $C_1$
such that for all $k$ and all $x,y\in \Omega_k$ $(x\not=y)$
\begin{equation}\label{1.30}
 |H_k (x,y)| \le C_1\cdot
\left\{\begin{array}{ll}
 |x-y|^{4-n},\quad&\mbox{\ if\ } n>4,\\
\left(1+\left|\log |x-y|\right|\right),\quad&\mbox{\ if\ } n=4.
\end{array}\right.
\end{equation}
Let $a_k\in C^{0,\alpha} \left( \overline{\Omega_k}\right)$ and
$\Lambda>0$ such that $\forall k:\| a_k\|_{C^{0,\alpha}
 (\overline{\Omega_k})}\le \Lambda$
and let $\lambda>0$ be such that
$$
\forall k\quad  \forall \varphi\in C^{\infty}_c (\Omega_k) :\qquad
\int_{\Omega_k} \left( (\Delta\varphi)^2+a_k\varphi^2\right)\, dy \ge
         \lambda\int_{\Omega_k}\varphi^2\, dy.
$$
We denote by $G_k$ the Green's  functions for $\Delta^2+a_k$ in $\Omega_k$.
Then, there exists a constant $C_2=C_2(R,n,C_1,\lambda,\Lambda,\Omega)$ such that
one has the following estimate:
\begin{equation}\label{1.35}
\forall x,y\in \Omega_k, x\not=y:\quad |G_k (x,y)|\le C_2
\cdot
\left\{\begin{array}{ll}
 |x-y|^{4-n},&\mbox{\ if\ } n>4,\\
\left(1+\left|\log |x-y|\right|\right),&\mbox{\ if\ } n=4.
\end{array}\right.
\end{equation}
Moreover, assuming
\begin{equation}\label{1.351}
\forall x,y\in \Omega_k, x\not=y:\quad |\nabla_{(x,y)} H_k (x,y)|\le C_1
\begin{array}{ll}
\left|x-y\right|^{-1},\quad&\mbox{\ if\ } n=4,
\end{array}
\end{equation}
in dimension $n=4$ implies that
\begin{equation}\label{1.352}
\forall x,y\in \Omega_k, x\not=y:\quad |\nabla_{(x,y)} G_k (x,y)|\le C_2
\begin{array}{ll}
\left|x-y\right|^{-1},\quad&\mbox{\ if\ } n=4.
\end{array}
\end{equation}
\end{proposition}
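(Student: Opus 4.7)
The plan is to mimic the construction in Proposition~\ref{proposition1}, but now carrying out every step \emph{uniformly} in $k$. Since $H_k$ already plays the role of the fundamental kernel for $\Delta^2$ on $\Omega_k$ and satisfies the prescribed estimate (\ref{1.30}), I would build an approximate Green's function for $\Delta^2+a_k$ by iterating against $H_k$ itself (rather than against the free-space fundamental solution $\Gamma_0$). Concretely, set $K_{k,0}(x,y):=H_k(x,y)$ and recursively
\begin{equation*}
K_{k,j+1}(x,y):=-\int_{\Omega_k} H_k(z,y)\,a_k(z)\,K_{k,j}(x,z)\,dz,\qquad j=0,1,\dots,\ell-1,
\end{equation*}
with $\ell$ chosen so that $\ell+1>\tfrac{n}{4}$ (so $\ell=1$ suffices when $n=4$). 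A crucial advantage of iterating against $H_k$ instead of $\Gamma_0$ is that each $K_{k,j}$ automatically has vanishing Dirichlet data. Writing $\tilde G_k:=\sum_{j=0}^\ell K_{k,j}$, a direct computation gives $(\Delta^2+a_k)\tilde G_k(x,\cdot)=\delta_x+a_k K_{k,\ell}(x,\cdot)$ in $\mathcal D'(\Omega_k)$.

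The first main step is to control the $K_{k,j}$ uniformly in $k$. Feeding the estimate (\ref{1.30}) and the uniform bound $\|a_k\|_\infty\le\Lambda$ into a repeated application of Giraud's lemma, one obtains uniform bounds
\begin{equation*}
|K_{k,j}(x,y)|\le C_j\cdot
\begin{cases}
|x-y|^{4(j+1)-n},&\text{if }(j+1)<n/4,\\
1+|\log|x-y||,&\text{if }(j+1)=n/4,\\
1,&\text{if }(j+1)>n/4,
\end{cases}
\end{equation*}
with $C_j=C_j(n,R,C_1,\Lambda)$. In particular, once $\ell+1>n/4$, the kernel $K_{k,\ell}$ is uniformly bounded, and $a_k K_{k,\ell}$ is uniformly Hölder continuous on compacta away from the diagonal and globally in $L^\infty$, with bounds depending only on the data $(n,R,C_1,\Lambda)$.

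The remainder $u_{k,x}:=G_k(x,\cdot)-\tilde G_k(x,\cdot)$ solves the Dirichlet problem
\begin{equation*}
(\Delta^2+a_k)\,u_{k,x}=-a_k\,K_{k,\ell}(x,\cdot)\quad\text{in }\Omega_k,\qquad u_{k,x}|_{\partial\Omega_k}=\tfrac{\partial u_{k,x}}{\partial\nu}\big|_{\partial\Omega_k}=0,
\end{equation*}
whose right-hand side is uniformly bounded in $L^\infty(\Omega_k)$. Applying the variational $W^{2,2}$-estimate that comes from the uniform coercivity hypothesis, together with the Agmon–Douglis–Nirenberg $L^p$-estimates promoted to $C^{0}$ via Sobolev embedding, yields a uniform bound $\|u_{k,x}\|_{L^\infty(\Omega_k)}\le C_2(R,n,C_1,\lambda,\Lambda,\Omega)$. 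Here the dependence on $\Omega$ enters only through the uniform $C^{4,\alpha}$-smoothness of the $\Omega_k$ guaranteed by Definition~\ref{definition1}, so all constants in the ADN theorem can be chosen independent of $k$. Combining the estimates for $K_{k,0},\dots,K_{k,\ell}$ and $u_{k,x}$ proves (\ref{1.35}).

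For the gradient estimate (\ref{1.352}) in dimension $n=4$, I would repeat the same bookkeeping with $\ell=1$, using (\ref{1.351}) in place of (\ref{1.30}) and differentiating the Neumann-type kernel under the integral. The $\nabla_y$-derivative of $K_{k,1}$ is then bounded by Giraud, and the $C^{1,\alpha}$-bound on $u_{k,x}$ follows by applying the ADN $C^{1,\alpha}$-estimate (as used in \eqref{1.4} and \eqref{1.62}) to the boundary-value problem satisfied by $u_{k,x}$. The $\nabla_x$ derivative is handled as in the proof of Proposition~\ref{proposition1} by differentiating the defining BVP with respect to the parameter $x$ and invoking symmetry $G_k(x,y)=G_k(y,x)$.

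The main obstacle is verifying that every constant in this chain of estimates—Giraud's lemma, the variational coercivity estimate, the ADN $C^{1,\alpha}$-estimate up to the boundary, and the Sobolev embedding into $L^\infty$—can be taken uniform in $k$. All of these are ultimately controlled by the uniform $C^{4,\alpha}$-charts $\Phi_{k,i}\to\Phi_i$ provided by Definition~\ref{definition1} and the uniform bounds on $\|a_k\|_{C^{0,\alpha}}$ and on the coercivity constant $\lambda$; the key observation is that the implicit constants in ADN depend only on the $C^{4,\alpha}$-norms of the boundary charts, which are uniformly bounded on the family.
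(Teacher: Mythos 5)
Your proposal is correct and follows essentially the same line as the paper's proof: iterate the perturbation against $H_k$ rather than the free-space kernel (so that the iterated kernels and the remainder all have zero Dirichlet data), bound the iterated kernels uniformly via Giraud's lemma and the assumed uniform estimate on $H_k$, and then use uniform ADN estimates with the coercivity hypothesis to control the remainder $u_{k,x}$, differentiating the defining BVP in $x$ and using symmetry for the gradient bound. The only cosmetic difference is your threshold $\ell+1>n/4$ versus the paper's more generous $\ell>n/4+1$, which the paper picks so that $\nabla\Gamma_{k,\ell}$ is also bounded; since you treat the gradient case only for $n=4$ with $\ell=1$ (where $\nabla_x K_{k,1}$ is indeed bounded), this is immaterial.
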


The dependence on $(\Omega_k)_k$
as regular perturbations of $\Omega$
is explicit via the geometric properties of
$\partial\Omega$. As long as these properties are uniformly satisfied,
the same constant may be chosen.

The case $n=3$ need not be covered here, since in this case, Proposition~\ref{proposition1}
already provides strong enough information for our purposes.

\begin{proof}
We proceed quite similarly as in the proof of Proposition~\ref{proposition1},
but now using the biharmonic Green's  functions $H_k$ instead of $\Gamma_0$.
That means that in $\Omega_k$, we define inductively
$$
\Gamma_{k,1} (x,y):=-\int_{\Omega_k} H_{k} (x,z)a_k(z)H_k(z,y)\, dz;
$$
$$
\Gamma_{k,j+1} (x,y):=-\int_{\Omega_k} \Gamma_{k,j} (x,z)a_k(z)H_k(z,y)\, dz.
$$
Moreover, as above, we make the ansatz with $u_{k,x}\in C^{4,\alpha}(\overline{\Omega_k})$
\begin{equation}\label{1.10}
G_k (x,y) :=H_k (x,y)+\sum^{\ell}_{j=1} \Gamma_{k,j}(x,y)+u_{k,x}(y).
\end{equation}
We choose  $\ell>\frac{n}{4}+1$ so that
\begin{equation}\label{1.13}
|\Gamma_{k,\ell}|\le C(R,n,\Lambda),\qquad
|\nabla \Gamma_{k,\ell}|\le C(R,n,\Lambda),
\end{equation}
while for the other $\Gamma_j$, we have in  particular that
\begin{equation}\label{1.14}
|\Gamma_{k,j} (x,y)|
\le C(R,n,\Lambda)\cdot
\left\{\begin{array}{ll}
 |x-y|^{4-n},\quad&\mbox{\ if\ } n>4,\\
\left(1+\left|\log |x-y|\right|\right),\quad&\mbox{\ if\ } n=4,
\end{array}\right.
\end{equation}
and assuming (\ref{1.351}) that
\begin{equation}\label{1.1212}
\forall x,y\in \Omega_k, x\not=y:\quad |\nabla_{(x,y)}\Gamma_{k,j}  (x,y)|\le
C(R,n,\Lambda) \cdot
\left|x-y\right|^{-1},\mbox{\ if\ } n=4.
\end{equation}
As before we see that $G_k$ is
indeed the Green's  function for the Dirichlet problem for $\Delta^2+a_k$
in $\Omega_k$,
iff the $u_{k,x}$ solve the following boundary value problems:
\begin{equation}\label{1.11}
\left\{ \begin{array}{ll}
\Delta^2 u_{k,x}(y)+a_k(y) u_{k,x}(y)=-a_k(y) \Gamma_{k,\ell} (x,y)\quad&\mbox{\ in\ }\Omega_k\\
u_{k,x}(y)=\frac{\partial}{\partial \nu}u_{k,x} =0&\mbox{\ for\ }
                        y\in \partial\Omega_k.
\end{array}\right.
\end{equation}
The right hand side is uniformly bounded, the operators are uniformly
coercive. Hence, $L^p$-theory (see \cite{ADN}) combined with Sobolev embedding theorems
and differentiating (\ref{1.11}) with respect to the parameter $x$
yields
\begin{equation}\label{1.12}
\begin{array}{rcl}
|u_{k,x}(y)|&\le& C(R,n,C_1,\lambda,\Lambda,(\Omega_k)_{k\in\mathbb{N}}),\\
|\nabla_{(x,y)} u_{k,x}(y)| &\le & C(R,n,C_1,\lambda,\Lambda,(\Omega_k)_{k\in\mathbb{N}}).
\end{array}
\end{equation}
The dependence on $(\Omega_k)_k$ is uniform in the sense described before the present proof.
Inserting (\ref{1.13}),  (\ref{1.14}), (\ref{1.12}) and (\ref{1.1212})  into (\ref{1.10}) proves the
claim.
\qed\end{proof}

Finally, we need a more precise statement concerning the smoothness
of the Green's  functions simultaneously with respect to {\em both} variables.

\begin{proposition}\label{proposition2.1}
Under the assumptions of Proposition~\ref{proposition2} we have in addition
that
$$
G_k\in C^{4,\alpha} \left(\overline{\Omega_k}\times \overline{\Omega_k}
\setminus \{ (x,y):\ x\not=y \}\right).
$$
\end{proposition}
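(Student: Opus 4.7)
The plan is to exploit the decomposition
$$G_k(x,y)=H_k(x,y)+\sum_{j=1}^{\ell}\Gamma_{k,j}(x,y)+u_{k,x}(y)$$
from the proof of Proposition~\ref{proposition2} and to establish joint $C^{4,\alpha}$-regularity of each summand off the diagonal $\{x=y\}$. Since all three pieces will be shown to be jointly $C^{4,\alpha}$ away from $\{x=y\}\cup(\partial\Omega_k\times\partial\Omega_k)$, and since boundary smoothness of each piece is enforced by the construction, one obtains the claim.

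For the biharmonic Green's function $H_k$ itself one fixes $y_0\in\overline{\Omega_k}$ and observes that in a neighborhood $V$ of any $x_0\neq y_0$ the function $x\mapsto H_k(x,y_0)$ solves the homogeneous problem $\Delta^2 H_k(\cdot,y_0)=0$ in $V\cap\Omega_k$ with zero Dirichlet data on $V\cap\partial\Omega_k$. Interior and boundary Schauder estimates of Agmon-Douglis-Nirenberg \cite[Thm.~9.3]{ADN} yield $C^{4,\alpha}$-bounds in $x$ locally uniformly in $y_0$. To upgrade this to \emph{joint} $C^{4,\alpha}$-regularity one differentiates the defining equation with respect to the parameter $y$: for any multi-index $|\beta|\leq 4$, the function $\partial_y^{\beta} H_k(x,y)$ again satisfies a biharmonic Dirichlet problem in $x$ with smooth data away from $x=y$, so Schauder theory applied iteratively gives the joint $C^{4,\alpha}$-regularity off the diagonal. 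The iterated kernels $\Gamma_{k,j}$ are then given by integrals against $H_k$; by standard Giraud-type differentiation under the integral sign combined with the pointwise bounds \eqref{1.14} (and the fact that $\ell>n/4+1$ makes the last kernel bounded and smooth), each $\Gamma_{k,j}$ inherits joint $C^{4,\alpha}$-regularity away from $\{x=y\}$.

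It remains to treat the regular part $u_{k,x}$, which solves the Dirichlet problem \eqref{1.11}:
\begin{equation*}
\Delta^2_y u_{k,x}(y)+a_k(y)u_{k,x}(y)=-a_k(y)\Gamma_{k,\ell}(x,y) \quad \text{in }\Omega_k,
\end{equation*}
with zero Dirichlet boundary data. For $x_0\in\overline{\Omega_k}$ and any neighbourhood $V\subset\overline{\Omega_k}\setminus\{x_0\}$ on which $\Gamma_{k,\ell}(x,\cdot)$ is jointly $C^{4,\alpha}$ in $(x,y)$ for $x$ near $x_0$ (guaranteed by the choice $\ell>n/4+1$), one differentiates the boundary value problem with respect to the parameter $x$: the functions $\partial_x^{\beta} u_{k,x}$ satisfy BVPs of the same type, with $C^{0,\alpha}$-right-hand sides depending continuously on $x$ and zero boundary data. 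Schauder theory in $y$ combined with continuity/H\"older continuity in $x$ of the data yields $C^{4,\alpha}$-bounds in $y$ that are themselves $C^{\alpha}$ in $x$ up to order four, giving the desired joint regularity of $u_{k,x}(y)$ on $V\times\overline{\Omega_k}$.

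The main technical difficulty is to make the \emph{joint} nature of the regularity rigorous: Schauder theory a priori yields regularity in $y$ for each fixed $x$, and one must carefully bootstrap parameter differentiability to mixed derivatives $\partial_x^{\beta_1}\partial_y^{\beta_2}$ with $|\beta_1|+|\beta_2|\leq 4$. The key tool is the uniform coercivity \eqref{uniformcoercive}, which gives well-posedness of all the parameter-derivative BVPs with quantitative Schauder constants depending only on $\|a_k\|_{C^{0,\alpha}}$, $\lambda$ and the $C^{4,\alpha}$-geometry of $\partial\Omega_k$. Once this is done for each summand, assembling the decomposition yields the claim of Proposition~\ref{proposition2.1}.
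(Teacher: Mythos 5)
Your proposal takes a genuinely different route from the paper, and that route has a real gap. The paper does \emph{not} re-use the decomposition $G_k=H_k+\sum\Gamma_{k,j}+u_{k,x}$ here; instead it works with $G_k$ directly and argues by duality: for test data $\varphi\in C^\infty_c(\Omega_k)$ it applies the ADN $W^{4,p}$-estimate (with $p\in(n,n+1)$) together with the Sobolev embedding $W^{4,p}\hookrightarrow C^{i,\beta}$, $\beta=4-i-n/p$, to the solution $\psi$ of $\Delta^2\psi+a_k\psi=\varphi$. Reading this through the representation $\psi(x)=\int G_k(x,y)\varphi(y)\,dy$ gives, \emph{by duality}, an a priori H\"older modulus $\|\nabla_x^i G_k(x,\cdot)-\nabla_x^i G_k(x',\cdot)\|_{L^q}\leq C|x-x'|^\beta$ for $i\le 3$. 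Since $\nabla_x^i G_k(x,\cdot)$ solves a homogeneous problem on $\Omega_k\setminus\{x\}$ with zero Dirichlet data, elliptic regularity upgrades the $L^q$-bound to $C^{4,\alpha}$ away from the diagonal; the case of four $x$-derivatives is then recovered by the symmetry $G_k(x,y)=G_k(y,x)$.

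The gap in your proposal is the circularity at the very first step. You write that ``for any $|\beta|\le 4$, $\partial_y^\beta H_k(x,y)$ again satisfies a biharmonic Dirichlet problem in $x$,'' and that this, via iterated Schauder, yields the joint regularity. But to differentiate the BVP in the parameter $y$ one must already know that $y\mapsto H_k(\cdot,y)$ is differentiable in a suitable (at least $C^0$) topology, and Schauder estimates at each fixed $y$ give no modulus of continuity \emph{in} $y$ for free. Your last paragraph correctly identifies this as ``the main technical difficulty'' but then invokes only the uniform coercivity, which controls the constants at each fixed $y$ and does nothing to resolve the bootstrap. The paper's duality trick is precisely what supplies the missing a priori modulus of continuity in the parameter (at the $L^q$ level) without presupposing any joint differentiability, and once that is in hand the upgrade is automatic. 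A secondary worry with your decomposition route: for the iterated kernels $\Gamma_{k,j}$ with small $j$, fourth-order derivatives of the kernel $H_k(z,y)$ scale like $|z-y|^{-n}$, so ``differentiation under the integral sign'' does not converge absolutely and cannot be invoked as stated; one would need a separate singular-integral (Calder\'on--Zygmund/Giraud) treatment, which is a different set of tools from what you describe. In short, the approach might be salvageable, but only after importing an argument (such as the paper's duality step) that supplies the initial joint-in-$(x,y)$ control; as written, the proposal assumes what it needs to prove.
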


\begin{proof}
We let $i\in\{0,\ldots,3\}$ and $p\in (n,n+1)$ so that  in particular
$4-i-\frac{n}{p}>0$. We let $\varphi\in C_c^\infty(\Omega_k)$ and consider
$\psi\in C^{4,\alpha}(\overline{\Omega_k})$  such that
$\Delta^2\psi+a_k\psi=\varphi$ in $\Omega_k$ and $\psi=\partial_\nu\psi=0$  on
$\partial\Omega_k$.  It follows from regularity theory (see \cite{ADN}) and
Sobolev's embedding theorem that  $$
\Vert \psi\Vert_{C^{i,\beta}(\overline{\Omega_k})}\leq C \Vert \psi\Vert_{W^{4,p}(\Omega_k)}
\leq C \Vert \varphi\Vert_{L^p(\Omega_k)}
$$
with $\beta\le 4-i-\frac{n}{p}$, $\beta\in (0,1)$. Here $W^{4,p}$
denotes the Sobolev space of order 4 in differentiability and of
order $p$ in integrability. Since $\psi(x)=\int_{\Omega_k} G_k(x,y)\varphi(y)\,
dy$, we get that $\nabla_x^i G_k$ makes sense and that
$$
\left|\int_{\Omega_k} (\nabla_x^i G_k(x,y)-\nabla_x^iG_k(x',y))\varphi(y)\, dy\right|
\leq C_2 \Vert \varphi\Vert_{L^p(\Omega_k)}|x-x'|^\beta.
$$
By duality, we then get that $y\mapsto \nabla_x^iG_k(x,y)\in L^q(\Omega_k)$ for all
$q\in (\frac{n+1}{n},\frac{n}{n-1})$ and that
$$
\Vert \nabla_x^iG_k(x,\cdot)-\nabla_x^iG_k(x',\cdot)\Vert_{q}
\leq C(q)|x-x'|^\beta\hbox{ for all }x,x'\in\Omega_k.
$$
It follows from the equation satisfied by $G_k(x,\cdot)$ that we have 
$\Delta^2 \nabla_x^iG_k(x,\cdot)+a \nabla_x^iG_k(x,\cdot)=0$ in
${\mathcal D}'(\Omega_k\setminus\{x\})$ and $\nabla_x^iG_k(x,\cdot)=0$,
$\partial_\nu \nabla_x^iG_k(x,\cdot)=0$ on $\partial\Omega_k$. It then follows from regularity
theory that $\nabla_x^iG_k(x,\cdot)\in C^{4,\alpha}(\overline{\Omega_k}\setminus\{x\})$.
Moreover, for all $\delta>0$, there exists $C(\delta)>0$ such that
$$\Vert \nabla_x^iG_k(x,\cdot)-\nabla_x^iG_k(x',\cdot)\Vert_{C^{4,\alpha}
(\overline{\Omega_k}\setminus (B_\delta(x)\cup B_\delta(x')))}
\leq C(\delta)|x-x'|^\beta
$$
for all $x,x'\in\Omega_k$.
This is valid for $i\leq 3$; using the symmetry of the Green's function,
we have a similar result for $i=4$ with respect to the 
$C^{3,\alpha}
(\overline{\Omega_k}\setminus (B_\delta(x)\cup B_\delta(x')))$-norm. 
It then follows that all derivatives of order $4$ are covered so that
$G_k\in C^{4,\alpha}(\overline{\Omega_k}\times\overline{\Omega_k}
\setminus\{(x,x):\, x\in \overline{\Omega_k}\})$.
This proves the proposition.
\qed\end{proof}


\section{Uniform bounds for the Green's  functions}
\label{section4}

As before, we consider a family of bounded regular domains
$(\Omega_k)_{k\in\mathbb{N}}$ being a smooth perturbation of a
fixed bounded regular  domain $\Omega$ according to
Definition~\ref{definition1}. We focus on proving
$$
|H_k (x,y)| \le C_1 \left\{\begin{array}{ll}
|x-y|^{4-n},\quad & \mbox{\ if\ }n>4,\\
\left( 1+|\log|x-y|\,| \right),& \mbox{\ if\ }n=4,\\
1,& \mbox{\ if\ }n=3;\\
\end{array}\right.
$$
$$
|\nabla_{(x,y)} H_k (x,y)| \le C_1 \left\{\begin{array}{ll}
|x-y|^{-1},& \mbox{\ if\ }n=4,\\
1,& \mbox{\ if\ }n=3;\\
\end{array}\right.
$$
with the constant $C_1=C_1(\Omega)$ being uniform for the whole
family $(\Omega_k)_{k\in\mathbb{N}}$. Originally, this type of estimates on the Green's functions is due to Krasovski\u{\i} \cite{Krasovskij2} (cf.
also \cite{Krasovskij1}) even for very general boundary value
problems for even order elliptic operators. For the reader's 
convenience, we include here an independant and shorter proof of these estimates.

\begin{theorem} \label{theorem3}
Let $\Omega$ be a  bounded  $C^{4,\alpha}$-smooth  domain
of $\rn$, $n\geq 3$ and   $(\Omega_k)_{k\in\nn}$  a
$C^{4,\alpha}$-smooth perturbation of $\Omega$. We denote by $H_k$
the Green's  functions for $\Delta^2$ in $\Omega_k$ under
Dirichlet boundary conditions.

Then, there exists a constant $C_1>0$
 such that for all $k$ and all $x,y\in\Omega_k$ with $x\neq y$ one has that
\begin{equation}\label{4.1}
|H_k(x,y)|\leq C_1 \cdot
\left\{\begin{array}{ll}
 |x-y|^{4-n},\quad&\mbox{\ if\ } n>4,\\
\left(1+\left|\log |x-y|\right|\right),&\mbox{\ if\ } n=4,\\
1,\quad&\mbox{\ if\ } n=3.
\end{array}\right.
\end{equation}
Moreover, for $n=3,4$ we prove that
\begin{equation}\label{4.101}
\forall x,y\in \Omega_k, x\not=y:\quad |\nabla_{(x,y)} H_k (x,y)|\le C_1
\cdot
\left\{\begin{array}{ll}
\left|x-y\right|^{-1},&\mbox{\ if\ } n=4,\\
1,&\mbox{\ if\ } n=3.
\end{array}\right.
\end{equation}
\end{theorem}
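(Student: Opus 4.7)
\textbf{Proof plan for Theorem~\ref{theorem3}.}

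The plan is to argue by contradiction using a blow-up procedure. For each $n\geq 3$, let $F_n(r)$ denote the asserted upper bound ($r^{4-n}$ if $n>4$, $1+|\log r|$ if $n=4$, $1$ if $n=3$) and define
$$
M_k:=\sup_{x,y\in\Omega_k,\, x\neq y}\frac{|H_k(x,y)|}{F_n(|x-y|)}.
$$
By Proposition~\ref{proposition1} we have $M_k<\infty$ for each fixed $k$. Suppose, for contradiction, that $M_k\to\infty$ along some subsequence; pick $(x_k,y_k)$ with $|H_k(x_k,y_k)|\geq \tfrac12 M_k F_n(|x_k-y_k|)$ and set $r_k:=|x_k-y_k|$. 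If $r_k$ remained bounded below along a subsequence, then standard interior and boundary $L^p$/Schauder estimates applied to the biharmonic equation $\Delta^2_y H_k(x_k,y)=0$ on $\Omega_k\setminus B_{r_k/2}(x_k)$ (together with the uniformly smooth charts of Definition~\ref{definition1}) would give a uniform $L^\infty$-bound on $H_k(x_k,\cdot)$ away from $x_k$, contradicting $M_k\to\infty$. Hence $r_k\to 0$ and we are in the singular regime.

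In this regime I rescale: for $n>4$ set
$$
\tilde H_k(\xi,\eta):=\frac{r_k^{n-4}}{M_k}\,H_k(x_k+r_k\xi,\, x_k+r_k\eta),
$$
with the analogous logarithmic normalization when $n=4$ and a direct division by $M_k$ when $n=3$. The rescaled Green's function satisfies $\Delta_\eta^2\tilde H_k=M_k^{-1}\delta_\xi\to 0$, has vanishing Dirichlet data on $\partial\tilde\Omega_k:=r_k^{-1}(\Omega_k-x_k)$, and obeys the a priori estimate $|\tilde H_k(\xi,\eta)|\leq F_n(r_k|\xi-\eta|)/(M_k F_n(r_k))$, which by the scaling behaviour of $F_n$ reduces to a control of the expected fundamental-solution type (bounded by $|\xi-\eta|^{4-n}$, by $1+|\log|\xi-\eta||$, or by $1$). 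Depending on whether $d(x_k,\partial\Omega_k)/r_k$ tends to $\infty$ or stays bounded (after replacing the centre $x_k$ by its nearest boundary projection if necessary), Definition~\ref{definition1}(iii)--(iv) and the convergence $\Phi_{k,i}\to\Phi_i$ in $C^{4,\alpha}_{\mathrm{loc}}$ imply that $\tilde\Omega_k$ converges in $C^{4,\alpha}_{\mathrm{loc}}$ to either $\mathbb{R}^n$ or to a half-space $\mathbb{R}^n_-$. Combining the pointwise bound with interior and boundary $L^p$-estimates plus Sobolev embedding (in the spirit of Proposition~\ref{proposition2.1}), I extract a limit $\tilde H$ that is biharmonic on the limit domain minus the origin, vanishes together with its normal derivative on the limit boundary, inherits the pointwise bound, and satisfies $|\tilde H(0,\eta_0)|\geq 1/2$ at some $|\eta_0|=1$.

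The final step is to rule out such a $\tilde H$ by a Liouville-type theorem. In the interior case, $\tilde H(0,\cdot)$ is biharmonic on $\mathbb{R}^n$, bounded (resp. with the logarithmic/constant control matching $F_n$), and tends to $0$ at infinity when $n\geq 5$, so standard Liouville for $\Delta^2$ forces $\tilde H\equiv 0$; for $n=3,4$ one combines this with the pointwise bound near the origin. In the boundary case one must classify biharmonic functions on $\mathbb{R}^n_-$ satisfying homogeneous Dirichlet conditions on $\{x_1=0\}$ and the same growth bound—this half-space Liouville statement is the main obstacle, and the expected route is to apply a Kelvin-type transform (or polyharmonic Almansi expansion with explicit handling of the kernel of $\Delta^2$ on $\mathbb{R}^n_-$) to conclude $\tilde H\equiv 0$. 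In either case the contradiction with $|\tilde H(0,\eta_0)|\geq 1/2$ proves \eqref{4.1}. The gradient bounds \eqref{4.101} in dimensions $n=3,4$ follow by running precisely the same blow-up with the correct scaling factor $M_k^{-1} r_k^{n-3}\nabla H_k$, using $H_k(x,y)=H_k(y,x)$ and differentiating the boundary value problem (\ref{1.3}) of Proposition~\ref{proposition1} with respect to the parameter $x$ to justify the required a priori regularity of $\nabla_{(x,y)}H_k$ before passing to the limit.
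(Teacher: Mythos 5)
Your blow-up strategy is a genuine alternative to the paper's, but as written it has two unresolved gaps, and it misses the key mechanism the paper actually uses. The paper does not normalise by $M_k$ and does not appeal to any Liouville theorem in the proof of Theorem~\ref{theorem3}. Instead, the estimate (\ref{1.00}) of Proposition~\ref{proposition1} (which bounds $|H_k|$ by $|x-y|^{4-n}$ \emph{plus} $\max\{d(x,\partial\Omega_k),d(y,\partial\Omega_k)\}^{4-n}$) forces any contradicting sequence to satisfy $d(x_k,\partial\Omega_k)/|x_k-y_k|\to 0$. The decisive tool is then Lemma~\ref{lemma4.1}: a duality argument with the $W^{4,q'}$--$C^{0,\beta}$ Sobolev embedding giving $\|H_k(x,\cdot)\|_{L^q(\Omega_k)}\le C\,d(x,\partial\Omega_k)^{4-n+n/q}$. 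After rescaling, this shows the $L^q$-norm of $\tilde H_k$ on an annulus vanishes as $k\to\infty$, so local elliptic estimates give $\tilde H_k\to 0$ uniformly away from the singularity; evaluating at $(y_k'-x_k')/|y_k'-x_k'|+\rho_k\vec e_1$ immediately contradicts the blow-up assumption, with no classification of limit profiles needed at all. You replace this with the assertion that your rescaled limit must vanish by a Liouville theorem; you yourself flag the half-space version with Dirichlet data as ``the main obstacle'' and leave it open (the paper \emph{does} establish such a statement later, in the proof of Lemma~\ref{lemma3.5}, via the Duffin--Huber biharmonic reflection, but it is not used for Theorem~\ref{theorem3}).

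Two further problems. First, for $n=3$ your interior blow-up case (when $d(x_k,\partial\Omega_k)/r_k\to\infty$) produces a \emph{bounded} entire biharmonic function $\tilde H$ with $|\tilde H(0,\eta_0)|\ge 1/2$; Liouville for polyharmonic functions only says $\tilde H$ is constant, not zero, so there is no contradiction. The paper sidesteps this by observing that for $n=3$, Proposition~\ref{proposition1} already gives uniform bounds on $H_k$ and $\nabla H_k$ with a constant depending only on the $C^{4,\alpha}$-data of the boundary, which is uniform along the family $(\Omega_k)$ by Definition~\ref{definition1}; no blow-up is performed at all. Second, for $n=4$ the phrase ``analogous logarithmic normalization'' hides a real difficulty: $1+|\log r|$ does not scale cleanly, and the a priori bound you would inherit, $M_k^{-1}(1+|\log(r_k|\xi-\eta|)|)$, degenerates as $r_k\to 0$. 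The paper resolves this by proving the gradient bound $|\nabla H_k|\lesssim |x-y|^{-1}$ first (which does scale homogeneously) via the same $L^q$-duality scheme, and then integrating to obtain (\ref{4.1}); your passing remark at the end about differentiating (\ref{1.3}) is in the right spirit, but it should be the backbone of the $n=4$ argument rather than an afterthought.
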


\begin{proof} If $n=3$, the statement of Proposition~\ref{proposition1}
is already strong enough and nothing remains to be proved. We postpone
the case $n=4$ and start with proving the theorem in the generic
case $n>4$. We argue by contradiction and assume that there exist two
sequences $(x_k)_{k\in\mathbb{N}}, (y_k)_{k\in\mathbb{N}}$
with $x_k,y_k\in\Omega_{\ell_k} $
such that $x_k\neq y_k$ for all $k\in\mathbb{N}$ and such that
\begin{equation}\label{hyp:abs}
\lim_{k\to +\infty}|x_k-y_k|^{n-4}|H_{\ell_k}(x_k,y_k)|=+\infty.
\end{equation}
It is enough to consider $\ell_k=k$; other situations may be reduced to this
by relabelling or are even more special.
After possibly passing to a subsequence,
it follows from \eqref{1.00} that there exists $x_{\infty}\in \partial\Omega$ such that
\begin{equation}\label{lim:xk}
\lim_{k\to +\infty}x_k=x_{\infty}\hbox{ and }
\lim_{k\to +\infty}\frac{d(x_k,\partial\Omega_k)}{|x_k-y_k|}=0.
\end{equation}
We remark that the constant in \eqref{1.00} can be chosen uniformly for the family
 $(\Omega_k)_{k\in\nn}$.

\begin{lemma}\label{lemma4.1} Assume that $n\ge 4$.
For any $q\in \left(\frac{n}{n-3},\frac{n}{n-4}\right)$, there exists $C(q)>0$ such that
for all $k$ and all $x\in\Omega_k$ we have
\begin{equation}\label{ineq:1}
\Vert H_k(x,\, . \, )\Vert_{L^q(\Omega_k)}\leq C d(x,\partial\Omega_k)^{4-n+\frac{n}{q}}.
\end{equation}
The constant $C$ can be chosen uniformly for the family $(\Omega_k)_{k\in\nn}$.
\end{lemma}

\begin{proof}
We proceed with the help of a duality argument.
Let $\psi\in C^\infty_c(\Omega_k)$ and let $\varphi\in C^{4,\alpha}(\overline{\Omega_k})$
be a solution of
$$
\left\{\begin{array}{ll}
\Delta^2\varphi=\psi \quad &\mbox{\ in\ }\Omega_k,\\
\varphi=\partial_\nu\varphi=0&\mbox{\ on\ }\partial\Omega_k.
\end{array}\right.
$$
Let $q\in \left(\frac{n}{n-3},\frac{n}{n-4}\right)$ and denote $q'=\frac{q}{q-1}$
the dual exponent, so that in particular $\frac{n}{4}<q'<\frac{n}{3}$.
It follows from elliptic estimates \cite[Thm. 15.2]{ADN} that there exists
$C_3>0$ independent of $\varphi,\psi$ and $k$ such that
$$
\Vert\varphi\Vert_{W^{4,q'}(\Omega_k)}\leq
C_3\Vert\psi\Vert_{L^{q'}(\Omega_k)}.
$$
The  embedding
$W^{4,q'}(\Omega_k)\hookrightarrow C^{0,\beta}(\overline{\Omega_k})$
with $\beta=4-\frac{n}{q'}=4-n+\frac{n}{q}$
being continuous uniformly in $k$
shows that there exists
$C_4>0$ independent of $\varphi$ and $k$ such that
$\Vert\varphi\Vert_{C^{0,\beta}(\overline{\Omega_k})}\leq
C_4\Vert\varphi\Vert_{W^{4,q'}(\Omega_k)}$. Let $x\in\Omega_k$ and
$x'\in\partial\Omega_k$. We then get that
$$
|\varphi(x)|=|\varphi(x)-\varphi(x')|\leq
\Vert\varphi\Vert_{C^{0,\beta}(\overline{\Omega_k})}
|x-x'|^\beta\leq
C_3C_4\Vert\psi\Vert_{L^{q'}(\Omega_k)}|x-x'|^\beta.$$
Moreover, $\varphi(x)=\int_{\Omega_k} H_k (x,y)\psi(y)\, dy$ for all $x\in\Omega_k$
by Green's  representation formula.
Therefore, taking the infimum with respect to
$x'\in\partial\Omega_k$, we have that
$$
\left|\int_{\Omega_k} H_k (x,y)\psi(y)\, dy\right|
\leq C_3 C_4\Vert\psi\Vert_{L^{q'}(\Omega_k)}d(x,\partial\Omega_k)^\beta
$$
for all $\psi\in C^\infty_c(\Omega_k)$. Inequality \eqref{ineq:1} then
follows.
\qed\end{proof}

\begin{lemma}\label{lemma4.2}
Assuming $n>4$ and (\ref{hyp:abs}), one has that
$\lim_{k\to +\infty}|x_k-y_k|=0$.
\end{lemma}

\begin{proof}
Assume by contradiction that $|x_k-y_k|$ does not converge to $0$. After
extracting a subsequence we may then assume that there exists $\delta >0$
such that all $x_k\in B_{\delta} (x_{\infty})$ and all $y_k\in \Omega_k
\setminus \overline{B_{3\delta} (x_{\infty})}$. We consider $q$ as in Lemma~\ref{lemma4.1}.
In particular we know that $\Vert H_k(x,\, . \, )\Vert_{L^q(\Omega_k)}\leq C$
uniformly in $k$. By applying
local elliptic estimates (cf. \cite[Theorem 15.3]{ADN})
combined with Sobolev embeddings
in $\Omega_k \setminus \overline{B_{2\delta} (x_{\infty})}$
we find that
$$
\| H_k(x_k,\, . \, )\|_{L^{\infty}\left(\Omega_k\setminus \overline{B_{3\delta} (x_{\infty})}\right)}
\le C(q,\delta)
$$
uniformly in $k$. In particular, we would have
$$
| H_k(x_k,y_k ) |\le C(q,\delta)\quad \mbox{\ and\ }\quad
|x_k-y_k|^{n-4}| H_k(x_k,y_k ) |\le C(q,\delta)
$$
independent of $k$. This contradicts our hypothesis (\ref{hyp:abs}).
\qed\end{proof}

\medskip\noindent{\it Concluding the proof of Theorem~\ref{theorem3},
case $n>4$.}
In what follows we may work in one fixed coordinate
domain $U_i$; for this reason we drop the index $i$. Let
$\Phi_k:U\to \mathbb{R}^n$ be coordinate charts of $\Omega_k$ at
$x_{\infty}$ as in Definition~\ref{definition1}.  We recall that
 $$
\Phi_k(U\cap \{x_1<0\})=\Phi_k(U)\cap\Omega_k\hbox{ and }
\Phi_k(U\cap
\{x_1=0\})=\Phi_k(U)\cap\partial\Omega_k.
$$
Without loss of generality we may assume that
$\Phi_k(0)=x_{\infty}$ and $B_\delta(x_{\infty})\subset\Phi_k(U)$.

We let $x_k=\Phi_k({x}_k')$ and $y_k=\Phi_k({y}_k')$.
Therefore, \eqref{lim:xk} rewrites as
\begin{equation}\label{lim:nxk}
\lim_{k\to +\infty} {x}_k'=0\hbox{ and }
\lim_{k\to +\infty}\frac{{x}_{k,1}' }{|{x}_k'-{y}_k'|}=0.
\end{equation}
We define for $R$ large enough
$$
\tilde{H}_k(z)=|{x}_k'-{y}_k'|^{n-4}H_k(\Phi_k({x}_k'),
 \Phi_k({x}_k'+|{x}_k'-{y}_k'|(z-\rho_k \vec{e}_1)))
$$
in $B_R(0)\cap\{x_1<0\}$, where $\rho_k:=\frac{ {x}_{k,1}'
}{|{x}_k'-{y}_k'|}$.  We rewrite the biharmonic equation $\Delta^2 H_k (x,\, .
\, )=0$  complemented with Dirichlet boundary conditions as
$$
\Delta_{g_k}^2 \tilde{H}_k=0
\hbox{ in }\left( B_R(0)\cap\{z_1<0\}\right) \setminus\{\rho_k\vec{e}_1\},
\quad \tilde{H}_k=\partial_1\tilde{H}_k=0\hbox{ on }\{z_1=0\}.
$$
Here, $g_k (z)=\Phi_k^*({\mathcal E})
({x}_k'+|{x}_k'-{y}_k'|(z-\rho_k \vec{e}_1))$, ${\mathcal E}=(\delta_{ij})$
the Euclidean metric, and
$\Delta_{g_k}$ denotes the Laplace-Beltrami operator with respect
to this rescaled and translated
pull back of the Euclidean metric under $\Phi_k$.
Then, for $\tau>0$ being chosen suitably small,
it follows from elliptic estimates (see \cite[Theorem 15.3]{ADN})
and Sobolev embeddings
that there exists $C(R,q,\tau)>0$ such that
\begin{equation}\label{est:Gk}
|\tilde{H}_k(z)|\leq C(R,q,\tau)\Vert \tilde{H}_k\Vert_{L^q(B_{R}(0)\setminus
B_{\tau}(0))} \end{equation}
for all $z\in B_{R/2}(0)\setminus B_{2\tau}(0)$, $z_1\le 0$.
In order to estimate the $L^q$-norm on the right-hand side we use
  \eqref{ineq:1} and obtain  that
\begin{eqnarray*}
\int_{B_R(0)\cap\{\zeta_1<0\}}|\tilde{H}_k(\zeta)| ^q\, d\zeta
&\leq &C|{x}_k'-{y}_k'|^{q(n-4)-n}\int_{\Omega_k}|H_k(x_k,y)|^q\, dy\\
&\leq &C|{x}_k'-{y}_k'|^{q(n-4)-n} d(x_k,\partial\Omega_k)^{(4-n)q+n}\\
&\leq
&C\left(\frac{d(x_k,\partial\Omega_k)}{|{x}_k'-{y}_k'|}\right)^{n-q(n-4)}.
\end{eqnarray*}
Therefore, with \eqref{lim:xk}, we get that
$\lim_{k\to +\infty}\Vert \tilde{H}_k\Vert_{L^q(B_{R}(0)\setminus
B_{\tau}(0))}=0$,  and \eqref{est:Gk} yields
$$
\lim_{k\to +\infty}\tilde{H}_k=0\hbox{\ in\ }C^0((B_{R/2}(0)\setminus
B_{2\tau}(0)) \cap\{ z_1 \le 0 \}).
$$
In particular, since $\lim_{k\to +\infty}\rho_k=0$, we have that
\begin{equation*}
\lim_{k\to
+\infty}\tilde{H}_k\left(\frac{{y}_k'-{x}_k'}{|{y}_k'-{x}_k'|}
+\rho_k \vec{e}_1\right)=0. \end{equation*}
This limit rewrites as
$$\lim_{k\to
+\infty}|x_k-y_k|^{n-4}|H_k(x_k,y_k)|=0,$$
contradicting \eqref{hyp:abs}. The proof of Theorem~\ref{theorem3},
$n>4$, is complete.
\qed\end{proof}

\medskip\noindent{\it Proof of Theorem~\ref{theorem3},
case $n=4$.} Here it is enough to prove (\ref{4.101}) for $\nabla_y$,
exploiting the symmetry of the Green's function. We argue by contradiction and
as in the proof for $n>4$, we may assume that  there exist two
sequences $(x_k)_{k\in\mathbb{N}}, (y_k)_{k\in\mathbb{N}}$
with $x_k,y_k\in\Omega_{k} $
such that $x_k\neq y_k$ and
\begin{equation}\label{4.2}
\lim_{k\to +\infty}|x_k-y_k|\, |\nabla_y H_{k}(x_k,y_k)|=+\infty.
\end{equation}
After possibly passing to a subsequence,
it follows from \eqref{1.01} that there exists $x_{\infty}\in \partial\Omega$ such that
\begin{equation}\label{4.3}
\lim_{k\to +\infty}x_k=x_{\infty}\hbox{ and }
\lim_{k\to +\infty}\frac{d(x_k,\partial\Omega_k)}{|x_k-y_k|}=0.
\end{equation}

Lemma~\ref{lemma4.1} may be applied with some $q>4$.
The analogue of Lemma~\ref{lemma4.2} is proved in exactly the same way as above.
Like above we now put for $R$ large enough
$$
\tilde{H}_k(z)=H_k(\Phi_k({x}_k'),
 \Phi_k({x}_k'+|{x}_k'-{y}_k'|(z-\rho_k \vec{e}_1)))
$$
in $B_R(0)\cap\{z_1<0\}$, where
$x_k=\Phi_k({x}_k')$, $y_k=\Phi_k({y}_k')$,
$\rho_k:=\frac{ {x}_{k,1}' }{|{x}_k'-{y}_k'|}$.
As above we find for $\tau>0$ small enough
that there exists $C(R,\tau,q)>0$ such that
\begin{equation*}
|\nabla \tilde{H}_k(z)|\leq C(R,q,\tau)\Vert
\tilde{H}_k\Vert_{L^q(B_{R}(0)\setminus B_{\tau}(0))}
 \end{equation*}
for all $z\in B_{R/2}(0)\setminus B_{2\tau}(0)$, $z_1\le 0$.
Using (\ref{ineq:1}) we obtain  that
\begin{eqnarray*}
\int_{B_R(0)\cap\{\zeta_1<0\}}| \tilde{H}_k(\zeta)| ^q\, d\zeta
&\leq &C|{x}_k'-{y}_k'|^{-4}\int_{\Omega_k}|H_k(x_k,y)|^q\, dy\\
&\leq &C\left(\frac{d(x_k,\partial\Omega_k)}{|{x}_k'-{y}_k'|}\right)^{4}.
\end{eqnarray*}
In the same way as in the generic case $n>4$, this yields first that
$$
\lim_{k\to +\infty}\nabla \tilde{H}_k=0\hbox{\ in\ }C^0((B_{R/2}(0)\setminus
B_{2\tau}(0)) \cap\{ z_1 \le 0 \})
$$
and back in the original coordinates
$$
\lim_{k\to\infty} |x_k-y_k| \, \left| \nabla_y H_k (x_k,y_k) \right|=0.
$$
So, we achieve a contradiction also if $n=4$. This proves
\eqref{4.101}. Integrating \eqref{4.101}, we get
\eqref{4.1}. The
proof of Theorem~\ref{theorem3} is complete. \hfill $\square$


\section{Convergence of the Green's  functions}
\label{section2}

As before, we consider a family of bounded regular domains
$(\Omega_k)$ being a $C^{4,\alpha}$-smooth perturbation of a fixed
bounded  $C^{4,\alpha}$-smooth  domain $\Omega$ according to
Definition~\ref{definition1}. We consider the operators
$\Delta^2+a_k$ in $\Omega_k$ and assume that
$$
\exists U_0\supset \overline{\Omega_k}: a_k\in C^{0,\alpha} (U_0);
$$
$$
\exists a_{\infty}\in C^{0,\alpha} (U_0): \lim_{k\to \infty} a_{k} = a_{\infty}
\mbox{\ in\ } C^{0,\alpha}_{loc} (U_0).
$$
As before, we denote by $G_k$ the Green's  functions corresponding
to $\Delta^2+a_k$ in $\Omega_k$ and by $G$ the Green's  functions
corresponding to $\Delta^2+a_\infty$ in $\Omega$ and show the
following convergence result. 
As for the diffeomorphisms $\Phi_{k,i},\Phi_i$ we refer to
Definition~\ref{definition1}.

\begin{proposition}\label{proposition3}
Let $x_k\in \Omega_k$ and assume that $\lim_{k\to\infty} x_k=x_{\infty}
\in \Omega$.
Then, we have:
\begin{eqnarray*}
G_k (x_k,\, .\, ) &\to & G(x_{\infty},\, .\, )  \mbox{\ in\ } C^4_{loc} (\Omega\setminus\{x_{\infty}\}),\\
G_k (x_k,\, .\, ) &\to & G(x_{\infty},\, .\, )  \mbox{\ in\ }L^1(\mathbb{R}^n),\\
G_k (x_k,\, .\, ) \circ \Phi_{k,i}&\to &
G(x_{\infty},\, .\, ) \circ \Phi_i \mbox{\ in\ }C^4_{loc} (U_i\cap \{ z_1\le 0 \}
           \setminus \{ \Phi_i^{-1} (x_{\infty}) \}).
\end{eqnarray*}
If $n=3$ we have in addition that
$$
G_k (\, .\, ,\, .\, ) \to  G(\, .\, ,\, .\, )  \mbox{\ in\ } C^0_{loc} (\Omega\times \Omega ).
$$
\end{proposition}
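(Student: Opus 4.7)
The plan is to combine the uniform singular-type bounds of Theorem~\ref{theorem3} and Proposition~\ref{proposition2} with Schauder estimates (interior and up-to-the-boundary, the latter applied in the straightening charts $\Phi_{k,i}$) to extract a locally convergent subsequence, and then to identify the limit as $G(x_\infty,\cdot)$ by means of the uniqueness statement in Proposition~\ref{proposition1}. Once the limit is identified, it is independent of the choice of subsequence, so the whole sequence converges.

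For interior convergence, fix a compact $K\subset\subset\Omega\setminus\{x_\infty\}$. For $k$ large, $K\subset\Omega_k$ with $d(K,\{x_k\}\cup\partial\Omega_k)$ bounded below. On $K$, the function $G_k(x_k,\cdot)$ solves $\Delta^2 u+a_k u=0$, is uniformly bounded by Proposition~\ref{proposition2} (and by Proposition~\ref{proposition1} when $n=3$), and the coefficient $a_k$ converges in $C^{0,\alpha}_{loc}(U_0)$. Interior Schauder estimates \cite{ADN} then give uniform $C^{4,\alpha}$-bounds, and Arzel\`a--Ascoli with a diagonal extraction produces a $C^4_{loc}(\Omega\setminus\{x_\infty\})$-limit. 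Near a boundary point, let $\tilde G_k(z):=G_k(x_k,\Phi_{k,i}(z))$. Since $\Phi_{k,i}\to\Phi_i$ in $C^{4,\alpha}_{loc}(U_i)$, the pulled-back equation is $\mathcal L_k\tilde G_k=0$ on $(U_i\cap\{z_1<0\})\setminus\{\Phi_{k,i}^{-1}(x_k)\}$ with zero Dirichlet data on $\{z_1=0\}$, where the coefficients of $\mathcal L_k$ converge in $C^{0,\alpha}_{loc}$, remain uniformly elliptic, and satisfy the complementing condition uniformly. Boundary Schauder estimates \cite{ADN} applied in half-balls staying away from $\Phi_i^{-1}(x_\infty)$ then yield uniform $C^{4,\alpha}$-bounds and, again by Arzel\`a--Ascoli, the claimed $C^4_{loc}$-limit on $U_i\cap\{z_1\le 0\}\setminus\{\Phi_i^{-1}(x_\infty)\}$.

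To identify the limit $u_\infty$ with $G(x_\infty,\cdot)$, I would take any $\varphi\in C^4(\overline\Omega)$ with $\varphi=\partial_\nu\varphi=0$ on $\partial\Omega$, pull it back through the diffeomorphisms $\Phi_{k,i}\circ\Phi_i^{-1}$ near the boundary and cut off in the interior to obtain $\varphi_k\in C^4(\overline{\Omega_k})$ with $\varphi_k=\partial_\nu\varphi_k=0$ on $\partial\Omega_k$ and $\varphi_k\to\varphi$ in $C^4$ uniformly on compact subsets of $\overline\Omega$. Plugging $\varphi_k$ into the representation formula \eqref{1.000} for $\Omega_k$,
\begin{equation*}
\varphi_k(x_k)=\int_{\Omega_k} G_k(x_k,y)\bigl(\Delta^2\varphi_k(y)+a_k(y)\varphi_k(y)\bigr)\,dy,
\end{equation*}
the left-hand side converges to $\varphi(x_\infty)$. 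On the right, a small ball around $x_\infty$ contributes uniformly negligibly because of the integrable singular bounds of Theorem~\ref{theorem3} (and the $L^q$-bound of Lemma~\ref{lemma4.1} for the boundary-aware case), while on the complement the $C^4_{loc}$-convergence proved above allows one to pass to the limit. The resulting representation formula forces $u_\infty=G(x_\infty,\cdot)$ by the uniqueness asserted in Proposition~\ref{proposition1}.

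Finally, $L^1(\rn)$-convergence (extending by zero) follows from the $C^4_{loc}$-convergence on $\overline\Omega\setminus\{x_\infty\}$, together with equi-integrability at $x_\infty$ furnished by the uniform $|x-y|^{4-n}$/log singular bounds, and the fact that the symmetric difference $\Omega_k\triangle\Omega$ has vanishing measure; Vitali's convergence theorem applies. When $n=3$, Proposition~\ref{proposition1} and Theorem~\ref{theorem3} give uniform bounds on $G_k$ and $\nabla G_k$ on $\overline{\Omega_k}\times\overline{\Omega_k}$, hence equicontinuity off the diagonal and $C^0_{loc}(\Omega\times\Omega)$-convergence by the same diagonal extraction argument applied in both variables. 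The main obstacle is verifying that the boundary Schauder constants in \cite{ADN} can be chosen uniformly in $k$; this is exactly what Definition~\ref{definition1}(iii)--(iv) is designed to secure, since it provides uniform $C^{4,\alpha}$-bounds on the charts and hence uniform control of all data entering the estimates.
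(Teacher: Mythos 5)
Your proposal is correct and essentially reproduces the paper's argument: uniform singular bounds from Theorem~\ref{theorem3} and Proposition~\ref{proposition2}, Schauder-based compactness, identification of the limit via the uniqueness in Proposition~\ref{proposition1}, and Vitali for the $L^1$-statement. The only small divergence is that the paper derives the required $C^{4,\alpha}$-compactness by observing that the regular parts $u_{k,x_k}$ in the decomposition of $G_k$ (from the constructions in Propositions~\ref{proposition1} and~\ref{proposition2}) are uniformly bounded in $C^{4,\alpha}(\overline{\Omega_k})$, whereas you apply interior and boundary Schauder estimates directly to $G_k(x_k,\cdot)$ away from the pole, and you spell out the identification step via pulled-back test functions that the paper leaves implicit; both routes are valid and lead to the same conclusion.
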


\begin{proof} According to Theorem~\ref{theorem3}
and Proposition~\ref{proposition2} we know that
\begin{eqnarray}
|H_k (x,y)|& \le& C  \cdot
\left\{\begin{array}{ll}
 |x-y|^{4-n},&\mbox{\ if\ } n>4,\\
\left(1+\left|\log |x-y|\right|\right),&\mbox{\ if\ } n=4,\\
1,&\mbox{\ if\ } n=3;
\end{array}\right.
\nonumber \\
 |G_k (x,y)| &\le & C \cdot
\left\{\begin{array}{ll}
 |x-y|^{4-n},&\mbox{\ if\ } n>4,\\
\left(1+\left|\log |x-y|\right|\right),&\mbox{\ if\ } n=4,\\
1,&\mbox{\ if\ } n=3;
\end{array}\right.\label{2.0}
\end{eqnarray}
uniformly in $k$. This shows that in particular
\begin{equation}\label{2.1}
\| G_k (x,\, .\, ) \|_{L^1(\Omega_k)}\le C \mbox{\ uniformly in \ }  k.
\end{equation}
Moreover, since $x_k\to x_{\infty}$, we may assume that all $x_k$
are in a small neighbourhood around $x_\infty$. Refering to the
construction in the proof of Proposition~\ref{proposition1} we see
that the $u_{k,x_k}$ arising there are uniformly bounded in
$C^{4,\alpha} \left( \overline{\Omega_k}\right)$. After selecting
a suitable subsequence we see that for each
$\Omega_0\subset\subset\Omega$ one has $G_k (x_k,\, .\, )\to
\varphi$ in $C^4_{loc}\left(\overline{\Omega_0}\setminus \{x_{\infty}
\}\right)$  and
$G_k (x_k,\, .\, ) \circ \Phi_{k,i}\to \varphi \circ \Phi_i
\mbox{\ in\ }C^4_{loc} (U_i\cap \{ z_1\le 0 \}
           \setminus \{ \Phi_i^{-1} (x_{\infty}) \})$ with
a suitable $\varphi\in C^{4,\alpha} (\overline{\Omega}\setminus \{x_{\infty} \})$.
Thanks to this compactness and the fact that in any case the limit
is the uniquely determined Green's  function, we have convergence
on the whole sequence towards $G(x_{\infty},\, .\, )$.

Finally, since we have pointwise convergence, (\ref{2.0}) allows
for applying Vitali's convergence theorem to show that
$$
G_k (x_k,\, .\, ) \to  G(x_{\infty},\, .\, )  \mbox{\ in\ }L^1(\mathbb{R}^n).
$$
The statement concerning $C^0_{loc} (\Omega\times \Omega )$-convergence
in $n=3$ follows from $|\nabla G_k (\, .\, ,\, .\, )| \le C$, cf. (\ref{1.01}).
\qed\end{proof}

In order to prove Lemma~\ref{lemma3.4} below we also
need a convergence result simultaneous in both variables.

\begin{proposition}\label{proposition3.1}
We have that
$$
G_k (\, .\,  ,\, .\, ) \circ \left( \Phi_{k,i} \times \Phi_{k,j}\right) \to
G (\, .\,  ,\, .\, ) \circ \left( \Phi_{i} \times \Phi_{j}\right)
$$
$
\mbox{\ in\ } C^4_{loc} \left(  (U_i\cap \{ x_1\le 0 \})\times(U_j\cap \{
x_1\le 0 \}) \setminus D_{ij}\right),
$
where
$$
D_{ij}=\{ (x,y)\in U_i\times U_j :\ \Phi_i (x)=\Phi_j (y)\}.
$$
\end{proposition}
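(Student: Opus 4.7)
The plan is to establish uniform $C^{4,\alpha}$-bounds for $F_k := G_k \circ (\Phi_{k,i}\times \Phi_{k,j})$ on arbitrary compact subsets $K$ of $(U_i\cap \{x_1\le 0\})\times(U_j\cap \{x_1\le 0\}) \setminus D_{ij}$, and then to invoke Arzel\`a--Ascoli together with Proposition~\ref{proposition3} to identify the limit. Fix such a $K$. By the $C^{4,\alpha}_{loc}$-convergence of the charts and the definition of $D_{ij}$, there is $\delta>0$ and $k_0\in\mathbb{N}$ such that $|\Phi_{k,i}(x)-\Phi_{k,j}(y)|\ge \delta$ for all $k\ge k_0$ and all $(x,y)\in K$. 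Theorem~\ref{theorem3} and Proposition~\ref{proposition2} provide a uniform $L^\infty$-bound for $F_k$ on $K$.

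Next, for every fixed $x$ the function $y\mapsto G_k(\Phi_{k,i}(x),\Phi_{k,j}(y))$ solves in pullback coordinates an elliptic equation of the form
\begin{equation*}
\bigl(\Delta_{g_{k,j}}^{2}+\tilde a_{k,j}\bigr)u=0
\end{equation*}
on an open set containing the projection of $K$ onto its second factor, with homogeneous Dirichlet data on $\{y_1=0\}$. The pulled-back metrics $g_{k,j}$ and coefficients $\tilde a_{k,j}$ converge in $C^{4,\alpha}_{loc}$, respectively $C^{0,\alpha}_{loc}$, by Definition~\ref{definition1} and the assumption on $a_k$, so the relevant interior and boundary Schauder estimates from \cite{ADN} hold with constants independent of $k$. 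Combined with the uniform $L^\infty$-bound, this gives a $C^{4,\alpha}$-bound in $y$ of $F_k(x,\cdot)$ on the second-factor projection of $K$, uniform in $k$ and in $x$ running over the first-factor projection of $K$. By exchanging the roles of $x$ and $y$ and exploiting the symmetry $G_k(x,y)=G_k(y,x)$, the same is true in the first variable.

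The main technical point is to upgrade these separate one-variable estimates to a joint $C^{4,\alpha}$-bound. I proceed by differentiating the elliptic equation with respect to the parameter $x$: since the boundary conditions in $y$ do not depend on $x$, each $x$-derivative $\partial_x^\gamma F_k$ satisfies the same elliptic problem in $y$ with zero Dirichlet data. Applying Schauder estimates to these derivatives, with right-hand side controlled by the $C^{4,\alpha}$-bound in the $x$-variable already established, I obtain a uniform control on every mixed derivative $\partial_x^\gamma\partial_y^\delta F_k$ with $|\gamma|+|\delta|\le 4$, and H\"older control at top order. This bootstrap is the main obstacle, because one must keep the Schauder constants uniform in $k$; the uniform $C^{4,\alpha}$-regularity of $\partial\Omega_k$ built into Definition~\ref{definition1} and the $C^{0,\alpha}$-convergence of $a_k$ are precisely what make this possible.

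Finally, Arzel\`a--Ascoli gives that any subsequence of $(F_k)$ has a sub-subsequence converging in $C^4(K)$ to some limit $F_\infty\in C^{4,\alpha}(K)$. For each fixed $x$ in the first-factor projection of $K$, Proposition~\ref{proposition3} applied to the sequence $x_k=\Phi_{k,i}(x)\to \Phi_i(x)\in\Omega$ forces $F_\infty(x,\cdot)$ to coincide with $G(\Phi_i(x),\Phi_j(\cdot))$ on the second-factor projection. Since this holds for every $x$, the limit $F_\infty$ is uniquely determined to be $G\circ(\Phi_i\times\Phi_j)$, and hence the full sequence $F_k$ converges to $G\circ(\Phi_i\times\Phi_j)$ in $C^4_{loc}((U_i\cap\{x_1\le0\})\times(U_j\cap\{x_1\le0\})\setminus D_{ij})$.
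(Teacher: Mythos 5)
Your proposal is correct in spirit, and the overall architecture — establish uniform joint $C^{4,\alpha}$-bounds, invoke Arzel\`a--Ascoli, identify the limit via Proposition~\ref{proposition3} — matches the paper's. Where you differ is in how the uniform joint bounds are obtained. The paper's terse reference to ``the ideas of Propositions~\ref{proposition3} and~\ref{proposition2.1}'' points to the duality argument of Proposition~\ref{proposition2.1}: one tests against solutions of the adjoint problem to deduce $\| \nabla_x^i G_k(x,\cdot)-\nabla_x^i G_k(x',\cdot)\|_{L^q}\le C|x-x'|^\beta$, and then elliptic regularity for the homogeneous equation satisfied by $\nabla_x^i G_k(x,\cdot)$ upgrades $L^q$-control with H\"older-in-$x$ to $C^{4,\alpha}$-control with H\"older-in-$x$, uniformly in $k$. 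You instead exploit the pointwise bounds of Theorem~\ref{theorem3} and Proposition~\ref{proposition2}, run Schauder in $y$ (then in $x$ via symmetry), and then differentiate the $y$-equation with respect to the parameter $x$; this trades duality and $L^p$--$L^q$ exponent bookkeeping for direct $L^\infty$-Schauder estimates, which is arguably more elementary given that the $L^\infty$ bounds are already in hand. The two routes share the crucial observation that $\partial_x^\gamma F_k(x,\cdot)$ satisfies the same homogeneous elliptic problem in $y$ with homogeneous Dirichlet data, which is exactly what Proposition~\ref{proposition2.1} exploits via the weak formulation.

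One point in your write-up deserves sharpening: the phrase ``right-hand side controlled by the $C^{4,\alpha}$-bound in the $x$-variable'' is misleading, since the right-hand side of the equation for $\partial_x^\gamma F_k(x,\cdot)$ is zero. What the prior Schauder-in-$x$ bound supplies is the \emph{$L^\infty$ input} to the Schauder estimate in $y$ applied to $\partial_x^\gamma F_k(x,\cdot)$. Relatedly, to make the parameter-differentiation step fully rigorous one should not simply commute $\partial_x^\gamma$ with $\Delta_{g_{k,j}}^2$ (which would presuppose mixed derivatives of total order up to $8$); rather, as in the proof of Proposition~\ref{proposition2.1}, one should differentiate under the integral sign in the weak formulation to conclude that $\partial_x^\gamma F_k(x,\cdot)$ solves the homogeneous equation in $\mathcal{D}'$, and then let elliptic regularity supply the classical smoothness. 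With that clarification your argument is sound.
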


\begin{proof}
We
combine the ideas of the proofs of Propositions~\ref{proposition3} and
\ref{proposition2.1}. One should observe that Theorem~\ref{theorem3} and
Proposition~\ref{proposition2} guarantee uniform
$L^1$-bounds  for $H_k$ and $G_k$  as in the proof of Proposition~\ref{proposition3}.
\qed\end{proof}


\section{The limit of the zeros of the Green's  functions}
\label{section3}

We keep the notations of the previous sections.
In order to prove Theorem~\ref{theorem2}, we assume that for every $k$,
there exist
\begin{equation}\label{3.choice}
x_k,y_k\in \Omega_k,\ x_k\not=y_k:\qquad G_k (x_k,y_k) =0.
\end{equation}
After passing to subsequences there exist $x_\infty=\lim_{k\to\infty} x_k,y_\infty=\lim_{k\to\infty} y_k$.
Using Definition~\ref{definition1}, one sees that $x_\infty,y_\infty\in \overline{\Omega}$.

As for the location of these limit points, we distinguish several cases.

\subsection{Both points in the interior}\label{interiorinterior}

Here, we consider the case that  $x_\infty,y_\infty\in \Omega$.
Once it is shown that $x_\infty\not= y_\infty$ we conclude directly
from Proposition~\ref{proposition3} that
\begin{equation}
G(x_\infty,y_\infty)=0.
\end{equation}
So, we are left with proving:

\begin{lemma}\label{lemma3.1}
$x_\infty\not= y_\infty.$
\end{lemma}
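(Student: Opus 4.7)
The plan is to argue by contradiction: suppose $x_\infty = y_\infty =: p \in \Omega$, so $r_k := |x_k - y_k| \to 0$. Since $p$ lies in the interior of $\Omega$ and $(\Omega_k)_{k\in\nn}$ is a smooth perturbation of $\Omega$, I would first fix a compact set $K \subset \Omega \cap \Omega_k$ (for $k$ large) containing $p$ in its interior with $d(K,\partial\Omega_k) \geq d_0 > 0$; in particular $x_k, y_k \in K$ and $d(x_k,\partial\Omega_k) \geq d_0$.

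I would then invoke the Neumann--series construction underlying Proposition~\ref{proposition1}: for $\ell > n/4$,
\begin{equation*}
G_k(x_k, y) = \Gamma_0(x_k, y) + \sum_{j=1}^{\ell} \Gamma_{k,j}(x_k, y) + u_{k, x_k}(y),
\end{equation*}
where $\Gamma_0$ is the fundamental solution of $\Delta^2$ on $\rn$ and the $\Gamma_{k,j}$ are its iterated convolutions against $a_k$. Using Giraud's estimate (\ref{1.1}) together with the uniform control $\|a_k\|_{C^{0,\alpha}} \leq \Lambda$, each $\Gamma_{k,j}(x_k, y_k)$ with $j \geq 1$ is $O(1)$ uniformly in $k$. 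The regular remainder $u_{k,x_k}$ solves (\ref{1.3}); since $d(x_k,\partial\Omega_k) \geq d_0$, the norms of its boundary data are bounded independently of $k$, so the Schauder estimate (\ref{1.4}) (resp.~(\ref{1.62}), (\ref{1.69}) in $n=4,3$) yields a uniform $L^\infty$ bound on $u_{k, x_k}$. Combining these,
\begin{equation*}
G_k(x_k, y_k) = \Gamma_0(x_k, y_k) + O(1), \qquad k \to \infty.
\end{equation*}

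For $n \geq 5$ one has $\Gamma_0(x_k, y_k) = \frac{1}{2(n-4)(n-2)n e_n}\, r_k^{4-n} \to +\infty$; for $n = 4$, $\Gamma_0(x_k, y_k) = \frac{1}{16 e_4} \log(1/r_k) \to +\infty$. Either way $G_k(x_k, y_k) \to +\infty$, which contradicts $G_k(x_k, y_k) = 0$ for large $k$.

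The genuine obstacle is $n = 3$, where $\Gamma_0(x_k, y_k) = -\frac{1}{8\pi} r_k \to 0$ is too weak to dominate the bounded remainder. Here I would instead use the additional $C^0_{loc}(\Omega \times \Omega)$--convergence $G_k \to G$ provided by Proposition~\ref{proposition3} (which is available in $n=3$ precisely because of the gradient estimate~(\ref{1.01})): since $(x_k, y_k) \to (p, p)$ remains in a compact subset of $\Omega \times \Omega$, this gives $G_k(x_k, y_k) \to G(p, p)$, so it suffices to show $G(p, p) > 0$. I would deduce this strict positivity from the Nehari--type local positivity at interior points of Grunau--Sweers~\cite{GrunauSweers3}, the very ingredient the authors flag in the introduction as their replacement for singular-part domination in dimensions $n = 3, 4$; combined with $G_k(x_k, y_k) = 0$ this yields the required contradiction and completes the lemma.
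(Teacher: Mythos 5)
Your approach diverges from the paper's in all three regimes; for $n\ge 4$ this is a genuine (and arguably more elementary) alternative for the interior case, but for $n=3$ there is a gap.

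For $n>4$ the paper does not argue via a direct expansion: it rescales, setting $\tilde{G}_k(z)=|x_k-y_k|^{n-4}G_k(x_k,x_k+|x_k-y_k|z)$, passes to a blow-up limit solving $\Delta^2\tilde{G}=\delta_0$ on all of $\mathbb{R}^n$ with $|\tilde{G}(z)|\le C|z|^{4-n}$, and invokes a biharmonic Liouville theorem to identify $\tilde{G}(z)=\gamma_n|z|^{4-n}$, contradicting the zero at $|z|=1$. You instead read off the singular asymptotics directly from the Neumann-series decomposition of Proposition~\ref{proposition1}, exploiting that the pair stays in a fixed compact subset of $\Omega$; this is legitimate, though you should flag that the constants in (\ref{1.4})/(\ref{1.62}) must be taken uniform in $k$ (the paper arranges this). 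One inaccuracy to fix: by Giraud's estimate~(\ref{1.1}), the iterated kernels obey $|\Gamma_{k,j}(x,y)|\le C_j|x-y|^{4(j+1)-n}$ while $4(j+1)<n$, so they are \emph{not} $O(1)$ once $n\ge 8$; the correct (and sufficient) statement is $\Gamma_{k,j}(x_k,y_k)=o(|x_k-y_k|^{4-n})$, whence $G_k(x_k,y_k)\sim\Gamma_0(x_k,y_k)\to+\infty$. For $n=4$ the paper uses the Grunau--Sweers local positivity of $H_k$ together with a uniform bound on $H_k-G_k$; your singular-part argument via $\Gamma_0(x,y)=-\frac{1}{16e_4}\log|x-y|\to+\infty$ (with $\Gamma_1$ bounded) is again a valid alternative.

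For $n=3$ the passage $G_k(x_k,y_k)\to G(p,p)$ via Proposition~\ref{proposition3} is fine, but the final step does not close: the Nehari-type local positivity of \cite{GrunauSweers3} concerns the unperturbed biharmonic Green's function $H$, giving $H(p,p)\ge\varepsilon\, d(p,\partial\Omega)>0$, not $G(p,p)>0$. The only uniform control on $G-H$ available is an $L^\infty$ (or $C^2$) bound, and since $p$ is a fixed interior point with $d(p,\partial\Omega)$ bounded below, you cannot absorb that bound; this trick works in Lemma~\ref{lemma3.3} only because there $d(x_k,\partial\Omega_k)\to 0$ and one compares $\varepsilon d$ against $c\,d^2$. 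The paper instead closes $n=3$ with a coercivity argument: testing $\Delta^2 G(p,\cdot)+a_\infty G(p,\cdot)=\delta_p$ with $G(p,\cdot)$ itself (which lies in $W^{2,2}_0(\Omega)$ when $n=3$) and using (\ref{uniformcoercive}) yields
$$
G(p,p)=\int_\Omega\left((\Delta G(p,y))^2+a_\infty(y)G(p,y)^2\right)dy\ge\lambda\int_\Omega G(p,y)^2\,dy>0,
$$
contradicting $G(p,p)=0$. You should replace your $n=3$ step with this.
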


\begin{proof} Assume by contradiction that $x_\infty=y_\infty$. We consider
first the case $n>4$ and here,
the rescaled Green's  function:
\begin{equation}\label{3.0}
\tilde{G}_k (z):=|x_k-y_k|^{n-4} G_k (x_k,x_k+|x_k-y_k|z).
\end{equation}
Let $\varepsilon>0$ be such that $\overline{B_{2\varepsilon} (x_\infty)}\subset
\Omega\cap\Omega_k$ for all $k$. Then,
for $k$ large enough, $|x_k-x_{\infty}| <\varepsilon$ and
$\tilde{G}_k (z)$ is certainly defined for
$|z|<\frac{\varepsilon}{|x_k-y_k|}$, where
one has  by Theorem~\ref{theorem3} and Proposition~\ref{proposition2} that
\begin{equation}\label{3.1}
|\tilde{G}_k (z)|\le C |z|^{4-n}
\end{equation}
uniformly in $k$. Because the $\tilde{G}_k $ are defined on a sequence of sets
which exhausts the whole $\mathbb{R}^n$ we may discuss how to pass to a limit
locally in $\mathbb{R}^n$. Since
$$
\Delta^2 \tilde{G}_k +|x_k-y_k|^4a_k(x_k+|x_k-y_k|z)  \tilde{G}_k=0
\mbox{\ on\ } \overline{B_{\varepsilon/|x_k-y_k|}(0)}\setminus \{ 0\},
$$
by elliptic Schauder theory we may assume that after possibly passing to
a subsequence that
\begin{equation}
\tilde{G}_k \to \tilde{G} \mbox{\ in\ } C^4_{loc} (\mathbb{R}^n \setminus\{ 0\}),
\mbox{\ where\ } |\tilde{G}(z)| \le C|z|^{4-n} .
\end{equation}
Moreover,
$$
\Delta^2 \tilde{G}=0\mbox{\ in\ } \mathbb{R}^n \setminus\{ 0\}.
$$
In order to compute the differential equation  satisfied by
$\tilde{G}$ near $z=0$, let $\varphi\in C_c^{\infty} (\mathbb{R}^n)$ with
supp $\varphi\subset B_R (0)$ and define for $k$ large enough
$$
\Omega_k\ni x\mapsto \varphi_k(x):=\varphi\left(\frac{x-x_k}{|x_k-y_k|}\right),
\quad \varphi_k \in C_c^{\infty} (\Omega_k).
$$
\begin{eqnarray*}
\varphi(0) &=& \varphi_k(x_k)=\int_{\Omega_k} G_k(x_k,y)(\Delta^2 \varphi_k
           +a_k \varphi_k) \, dy\\
&=&\int_{B_{|x_k-y_k|R }(x_k)} G_k(x_k,y)|x_k-y_k|^{-4} \\
&&\quad \cdot \left(
      \left(\Delta^2\varphi\right) \left(\frac{y-x_k}{|x_k-y_k|}\right)
         +|x_k-y_k|^4 a_k (y) \varphi\left(\frac{y-x_k}{|x_k-y_k|}\right)
            \right) \, dy\\
&=&\int _{B_R(0)} |x_k-y_k|^{n-4} G_k(x_k,x_k+|x_k-y_k|z)\\
&&\quad \cdot
\left( \Delta^2 \varphi (z) + |x_k-y_k|^4 a_k(x_k+|x_k-y_k|z)\varphi(z)\right)\, dz\\
&=& \int_{\mathbb{R}^n} \tilde{G}_k(z)
\left( \Delta^2 \varphi (z) + |x_k-y_k|^4 a_k(x_k+|x_k-y_k|z)\varphi(z)\right)\, dz.
\end{eqnarray*}
We put $\gamma_n=\frac{1}{2(n-4)(n-2)ne_n}$ and obtain, letting $k\to \infty$:
$$
\int_{\mathbb{R}^n}\tilde{G} (z) \Delta^2 \varphi (z) \, dz =\varphi (0) =
\int_{\mathbb{R}^n}\gamma_n|z|^{4-n} \Delta^2 \varphi (z) \, dz .
$$
This shows that we have in the sense of distributions that
$$
\Delta^2 \left( \tilde{G}(z)-\gamma_n|z|^{4-n}\right) =0
\mbox{\ in\ } \mathbb{R}^n.
$$
Hence,
$$
\tilde{G}(z)=\gamma_n|z|^{4-n} +\psi (z),\quad \psi\in C^{\infty}(\mathbb{R}^n),
\quad \Delta^2 \psi =0.
$$
Thanks to (\ref{3.1}) we know further that
$$
|\psi(z)|\le C(1+|z|)^{4-n}.
$$
Also for entire bounded biharmonic (even more generally polyharmonic) functions,
Liouville's theorem holds true, i.e. these are constant, see \cite[p. 19]{Nicolesco}.
Hence $\psi(z)\equiv 0$ showing that
$$
\tilde{G}(z)=\gamma_n|z|^{4-n},\qquad z\in \mathbb{R}^n\setminus \{ 0\}.
$$
On the other hand we have according to the choice (\ref{3.choice})
of $x_k,y_k$ and the definition (\ref{3.0}) of $\tilde{G}_k$ that
$$
\tilde{G}_k \left( \frac{y_k-x_k}{|x_k-y_k|}\right)=
|x_k-y_k|^{n-4} G_k \left(x_k,y_k\right)=0.
$$
Hence there exists at least one point $\zeta\in\mathbb{R}^n$ with
$$
|\zeta|=1 \mbox{\ and\ } 0=\tilde{G}(\zeta)=\gamma_n|\zeta|^{4-n},
$$
which is false.
This proves the statement for the case $n>4$.
One should observe that when looking just at the biharmonic operator,
a proof for the previous lemma would directly follow from the local
positivity results in general domains, which are proved in \cite{GrunauSweers3}.
This observation will be useful in what follows.

Let us now consider the case $n=4$. Since $x_\infty\in\Omega$, according to
\cite{GrunauSweers3}, there exists (a small) $\delta_1>0$ such that for
all $k$ and all $x,y\in \Omega_k$ we have that
\begin{equation}\label{3.101}
x,y\in B_{\delta_1}(x_\infty) \quad
\Rightarrow \quad H_k(x,y)\ge - \frac{1}{c_3}
\log|x-y|.
\end{equation}
We estimate the difference between $G_k$ and $H_k$. For arbitrary
but fixed $x\in \Omega$, we have that with respect
 to the $y$-variable, $\left(H_k-G_k\right)(x,\,
.\, )$ solves the following Dirichlet problem:
\begin{equation*}
\left\{\begin{array}{ll}
\Delta_y^2 \left(H_k-G_k\right)(x,y)+a_k(y) \left(H_k-G_k\right)(x,y)=
a_k(y) H_k(x,y),\quad&y\in \Omega_k\\
\left(H_k-G_k\right)(x,y)=\frac{\partial}{\partial
\nu_y}\left(H_k-G_k\right)(x,y)=0, &y\in
\partial\Omega_k.
\end{array}\right.
\end{equation*}
According to
Theorem~\ref{theorem3}, we have that $\| a_k(\, .\, ) H_k(x,\, .\,
)\|_{L^2(\Omega_k)}\le c_4$ uniformly in $k$ and $x$. Since
$\Delta^2+a_k$ is assumed to be uniformly coercive, elliptic
estimates from \cite{ADN} show that
\begin{equation*}
\| \left(H_k-G_k\right)(x,\, .\, )\|_{L^{\infty} (\Omega_k)}\le C
\| \left(H_k-G_k\right)(x,\, .\, )\|_{W^{4,2} (\Omega_k)} \le c_5,
\end{equation*}
uniformly in $x$ and $k$. Together with (\ref{3.101}), this gives
that there exist a $\delta_2>0$ and a constant
$c_6>0$  such that
\begin{equation}\label{3.102}
x,y\in B_{\delta_2} (x_\infty)\quad
\Rightarrow \quad G_k(x,y)\ge - \frac{1}{c_6}
\log|x-y|.
\end{equation}
This proves  the claim also for $n=4$, since by (\ref{3.102}), it
is impossible that $G_k(x_k,y_k)=0$, where $x_k,y_k\to
x_\infty\in\Omega$.

Finally, we consider $n=3$. Since here, according to
Proposition~\ref{proposition3}, also $G_k\to G$ in
$C^0_{loc}(\Omega\times\Omega)$ we have by assumption that
$G(x_{\infty},x_{\infty})=0$. On the other hand, testing the
boundary value problem for $G(x_{\infty},\, .\, )$ with
$G(x_{\infty},\, .\, )$ itself yields by virtue of the uniform
coercivity  that
$$
G(x_{\infty},x_{\infty})\ge \lambda  \int_\Omega G(x_{\infty},y)^2\, dy>0.
$$
We obtain a contradiction also in the case $n=3$.
So, the proof of Lemma~\ref{lemma3.1} is complete.
\qed\end{proof}

\subsection{One point in the interior, one point on the boundary}
\label{interiorboundary}

After possibly interchanging $x_\infty$ and $y_\infty$ we may consider the case
that $x_\infty\in \Omega$, $y_\infty\in \partial \Omega$.

\begin{lemma}\label{lemma3.2}
$$
\Delta_y G(x_\infty,y_\infty)=0.
$$
\end{lemma}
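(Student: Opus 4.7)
The plan is to blow up the Green's function near $y_\infty$ by rescaling in the normal-to-boundary direction, exploit the Dirichlet boundary conditions to reduce $G_k(x_k,\cdot)$ to a pure $w_1^2$ term up to higher order, and then extract $\Delta_y G(x_\infty,y_\infty)$ from the coefficient of that term.

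More concretely, I would fix a chart pair $\Phi_i,\Phi_{k,i}:U_i\to\rn$ at $y_\infty$ as in Definition~\ref{definition1} with $\Phi_i(0)=y_\infty$, write $y_k=\Phi_{k,i}(y_k')$, set $t_k:=-y_{k,1}'>0$ (so $t_k\to 0$), and put $\bar y_k^*:=(0,\bar y_k')\to 0$. Define the pulled-back Green's functions $h_k(w):=G_k(x_k,\Phi_{k,i}(w))$ and $h_\infty(w):=G(x_\infty,\Phi_i(w))$. Because $x_\infty\in\Omega$ sits away from a neighborhood of $0\in U_i$, Proposition~\ref{proposition3} gives $h_k\to h_\infty$ in $C^{4}_{loc}((U_i\cap\{w_1\leq 0\})\setminus\{\Phi_i^{-1}(x_\infty)\})$; in particular one has uniform $C^4$ estimates on $h_k$ on a fixed half-ball around $0$.

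The Dirichlet conditions transport to $h_k=\partial_1 h_k=0$ on $\{w_1=0\}\cap U_i$. Differentiating tangentially shows that every mixed partial $\partial^\alpha h_k$ with $\alpha_1\leq 1$ vanishes at every point $(0,\bar\eta)$ of the flattened boundary, so the degree-two Taylor polynomial of $h_k$ centered at $\bar y_k^*$ reduces to a single term:
$$
h_k(\bar y_k^*+\omega)=\tfrac{1}{2}\omega_1^2\,\partial_1^2 h_k(\bar y_k^*)+O(|\omega|^3),
$$
with a cubic remainder uniform in $k$. Applying this with $\omega=y_k'-\bar y_k^*=-t_k\vec e_1$ and invoking the hypothesis $G_k(x_k,y_k)=0$ yields $0=\tfrac{1}{2} t_k^2\,\partial_1^2 h_k(\bar y_k^*)+O(t_k^3)$; dividing by $t_k^2$ and sending $k\to\infty$ (using $\bar y_k^*\to 0$ and the $C^4_{loc}$-convergence above) forces $\partial_1^2 h_\infty(0)=0$.

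It remains to translate this into $\Delta_y G(x_\infty,y_\infty)=0$. Since $\nabla_y G(x_\infty,y_\infty)=0$, the chain rule gives $\partial_1^2 h_\infty(0)=D^2_y G(x_\infty,y_\infty)(\xi,\xi)$ with $\xi:=\partial_{w_1}\Phi_i(0)$, which is transverse to $\partial\Omega$ because $\Phi_i$ maps $\{w_1=0\}$ diffeomorphically onto the boundary. Since $G(x_\infty,\cdot)$ and its normal derivative vanish on $\partial\Omega$, all tangential and mixed second $y$-derivatives at $y_\infty$ vanish, leaving $D^2_y G(x_\infty,y_\infty)(\xi,\xi)=\xi_\nu^2\,\Delta_y G(x_\infty,y_\infty)$ with $\xi_\nu\neq 0$. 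The conclusion follows. The main technical obstacle I anticipate is justifying the uniformity of the cubic remainder, i.e.\ obtaining uniform $C^3$ control of the pulled-back Green's functions up to the flattened boundary; this hinges on Proposition~\ref{proposition3} (itself built on the uniform estimates of Theorem~\ref{theorem3} and Proposition~\ref{proposition2}) being $C^{4}_{loc}$ right up to $\{w_1=0\}$.
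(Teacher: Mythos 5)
Your proposal is correct and takes essentially the same route as the paper: pull back to local boundary-flattening coordinates via $\Phi_{k,i}$, Taylor-expand in the normal direction exploiting the Dirichlet conditions $h_k=\partial_1 h_k=0$ on $\{w_1=0\}$ to isolate the $\partial_1^2$ coefficient, use $G_k(x_k,y_k)=0$ and the $C^4_{loc}$-convergence from Proposition~\ref{proposition3} to conclude $\partial_1^2 h_\infty(0)=0$, and then translate this to $\Delta_y G(x_\infty,y_\infty)=0$ via the boundary conditions. The only cosmetic difference is that the paper uses the Lagrange form of the second-order remainder (evaluating $\partial_{11}$ at an intermediate point $\theta_k y_{k,1}'$), which sidesteps the uniform-cubic-remainder issue you flag, though your Peano-form variant is equally justified by the same $C^4_{loc}$ convergence up to the flattened boundary.
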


\begin{proof}
We may fix a neighbourhood $B_{\delta} (p_i)$ such that $y_\infty\in \partial \Omega\cap
B_{\delta} (p_i)$ so that for $k$ large enough $y_k \in \Omega_k\cap B_{\delta} (p_i)$.
We denote $y_k':=\Phi_{k,i}^{-1} (y_k)$, $y_\infty':=\Phi_{i}^{-1} (y_\infty)$ and
observe that $(y_k')_1<0$, $(y_\infty')_1=0$, $y_k'\to y_\infty'$ in $U_i$. 
we recall the notation $y_k'=( y_{k,1}',\bar{y_k'})$.
Writing
$$
\tilde{G}_{k,i}:=G_k (x_k,\, .\, ) \circ \Phi_{k,i},\quad
\tilde{G}_i:=G_k (x_{\infty},\, .\, ) \circ \Phi_{i}
$$
we see by means of Taylor's expansion that with suitable $\theta_k\in (0,1)$:
\begin{eqnarray*}
0&=& G_k(x_k,y_k)=\tilde{G}_{k,i}(y_k')\\
&=&\tilde{G}_{k,i}(0, \bar{y_k'})+\partial_1\tilde{G}_{k,i}(0,\bar{y_k'}) y_{k,1}'
+\frac{1}{2}\partial_{11}\tilde{G}_{k,i}(\theta_k y_{k,1}',\bar{y_k'})(y_{k,1}')^2\\
&=&\frac{1}{2}\partial_{11}\tilde{G}_{k,i}(\theta_k y_{k,1}',\bar{y_k'})(y_{k,1}')^2
\end{eqnarray*}
due to the boundary conditions on $G_k$. According to Proposition~\ref{proposition3}
this yields $\partial_{11}\tilde{G}_i(y_\infty')=0$. Since $G_k (x_k,\, .\, )|_{\partial \Omega}
=\frac{\partial}{\partial\nu}G_k (x_k,\, .\, )|_{\partial \Omega}=0$, we obtain back in
the original coordinates that $\Delta_y G(x_\infty,y_\infty)=0$ as stated.
\qed\end{proof}

\subsection{Both points on the boundary}
\label{boundaryboundary}

So, here we have to consider the case that both $x_\infty\in\partial\Omega$ and
$y_\infty\in\partial\Omega$. The most delicate part will be to prove that both
points have to be distinct:

\begin{lemma}\label{lemma3.3}
$x_\infty\not= y_\infty$.
\end{lemma}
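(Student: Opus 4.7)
The plan is to argue by contradiction: suppose $x_\infty=y_\infty\in\partial\Omega$, rescale around this common boundary limit while straightening $\partial\Omega_k$ through the chart $\phiki$, and identify the limit of the rescaled Green's functions with the biharmonic Green's function of the half-space $\rr^n_-$, which is strictly positive by an explicit Boggio-type formula. The hypothesis $\widetilde G_k(\widehat x_k,\widehat y_k)=0$ on the rescaled picture then produces a vanishing point for this positive half-space Green's function.

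Choose $i$ so that $x_\infty\in\phiki(U_i)\cap\partial\Omega$ and set $x_k':=\phiki^{-1}(x_k)$, $y_k':=\phiki^{-1}(y_k)\in\{z_1<0\}$. Both sequences converge to $\Phi_i^{-1}(x_\infty)$, which after a tangential translation we take to be the origin. The correct scale is
$$
\lambda_k:=\max\bigl(|x_k'-y_k'|,\,|x_{k,1}'|,\,|y_{k,1}'|\bigr)\to 0,
$$
and in the generic case $n>4$ I would define
$$
\widetilde G_k(w,z):=\lambda_k^{n-4}\,G_k\bigl(\phiki(\lambda_k w),\,\phiki(\lambda_k z)\bigr),\qquad \widehat x_k:=x_k'/\lambda_k,\quad \widehat y_k:=y_k'/\lambda_k.
$$
Each of $|\widehat x_k-\widehat y_k|,\,|\widehat x_{k,1}|,\,|\widehat y_{k,1}|$ then lies in $[0,1]$ with maximum equal to $1$; extracting a subsequence we obtain $\widehat x_k\to\widehat x_\infty$ and $\widehat y_k\to\widehat y_\infty$ in $\overline{\rr^n_-}$ with the same normalization. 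For $n=3,4$ the prefactor $\lambda_k^{n-4}$ is replaced by the natural one (none for $n=3$, a logarithmic subtraction for $n=4$), and the gradient estimates (\ref{4.101}) provide the compactness needed to pass to the limit.

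Theorem~\ref{theorem3} and Proposition~\ref{proposition2}, once rescaled, yield $|\widetilde G_k(w,z)|\le C|w-z|^{4-n}$. The rescaled metric $\lambda_k^{2}(\phiki^*{\mathcal E})(\lambda_k\,\cdot\,)$ converges in $C^3_{\text{loc}}$ to a constant-coefficient Euclidean metric on $\rr^n_-$ and $\lambda_k^4\,a_k\circ\phiki\to 0$. Hence $\widetilde G_k(\widehat x_k,\cdot)$ solves a biharmonic-type equation with Dirichlet boundary conditions on $\{z_1=0\}$, and elliptic Schauder estimates give $C^4_{\text{loc}}$ convergence away from the pole. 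Testing against rescaled bumps exactly as in the proof of Lemma~\ref{lemma3.1} identifies $\widetilde G_\infty(\widehat x_\infty,\cdot)$, whenever $\widehat x_\infty$ is an interior point of $\rr^n_-$, as the biharmonic Green's function of the half-space with pole $\widehat x_\infty$ and Dirichlet boundary conditions; Boggio's explicit formula for $\rr^n_-$ then yields strict positivity on $\rr^n_-\setminus\{\widehat x_\infty\}$.

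Passing to the limit in $\widetilde G_k(\widehat x_k,\widehat y_k)=0$ immediately contradicts positivity when $\widehat x_\infty,\widehat y_\infty\in\rr^n_-$ with $\widehat x_\infty\ne\widehat y_\infty$. The main obstacle is the degenerate configurations where one or both of $\widehat x_\infty,\widehat y_\infty$ lies on $\{z_1=0\}$, since the vanishing there is automatic from the boundary conditions. If only the evaluation point $\widehat y_\infty$ (resp.\ only the pole $\widehat x_\infty$) lies on the boundary, I would Taylor-expand $\widetilde G_k$ in $z$ (resp.\ $w$) exactly as in Lemma~\ref{lemma3.2}: the Dirichlet conditions kill the first two Taylor terms, so dividing by $\widehat y_{k,1}^2$ (resp.\ $\widehat x_{k,1}^2$) converts the vanishing of $\widetilde G_k$ into the vanishing of $\partial_{11,z}\widetilde G_\infty$ (resp.\ $\partial_{11,w}\widetilde G_\infty$) at $(\widehat x_\infty,\widehat y_\infty)$, which a Hopf-type lower bound of the form $\widetilde G_\infty(x,y)\sim c\,d(x)^2d(y)^2|x-y|^{-n}$ with $c>0$, read off from Boggio's half-space formula, then rules out. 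The genuinely hardest subcase is when the rescaled pole itself collapses onto the boundary, i.e.\ when $\lambda_k=|x_k'-y_k'|$ dominates both normal distances: the identification with a half-space Green's function fails, and in dimensions $n=3,4$ this is exactly where the Nehari-type local positivity result of Grunau--Sweers \cite{GrunauSweers3,Nehari} enters, as signalled by the authors, to rule out the zero of $\widetilde G_k$ before passing to the limit.
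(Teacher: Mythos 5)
Your overall blueprint is right: blow up at the common boundary limit, straighten $\partial\Omega_k$, identify the limit as Boggio's half-space Green's function $H$, and exploit its explicit positivity, with Taylor expansion at boundary points to pass to the boundary Laplacians, exactly as the paper does in cases (a)--(d) of the second case. But you attach the Nehari-type ingredient to the wrong regime, and that leaves a genuine gap.

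You claim the hardest subcase is ``$\lambda_k=|x_k'-y_k'|$ dominates both normal distances,'' and that there the half-space identification fails so Nehari enters. This is backwards. When $|x_k'-y_k'|$ dominates, both rescaled points land on $\partial\mathbb{R}^n_-$ at unit mutual distance; the half-space Green's function limit is perfectly well defined, and the contradiction comes from the boundary-derivative positivity $\Delta_x\Delta_y H>0$, read off from Boggio's explicit formula after the double Taylor expansion you already describe for the one-sided case -- not from Nehari. The Nehari-type estimate gives local positivity of the Green's function at small \emph{relative} distance to the pole (roughly $|x-y|\le\delta\,d(x,\partial\Omega_k)$), which is of no use when $|x_k-y_k|$ is large compared with the distances to the boundary. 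Conversely, the regime that genuinely needs Nehari or Liouville is the one your single scale $\lambda_k:=\max\bigl(|x_k'-y_k'|,|x_{k,1}'|,|y_{k,1}'|\bigr)$ obscures: when a normal distance dominates, i.e.\ $|x_k'-y_k'|=o\bigl(\max(|x_{k,1}'|,|y_{k,1}'|)\bigr)$. Then $\widehat{x}_\infty=\widehat{y}_\infty$ is an \emph{interior} point of $\mathbb{R}^n_-$ and $\widehat{y}_k$ collapses onto the pole $\widehat{x}_k$, so you cannot pass to the limit in $\widetilde{G}_k(\widehat{x}_k,\widehat{y}_k)=0$, because the $C^4_{loc}$ convergence holds only away from the pole. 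The paper handles this by re-rescaling at the finer scale $|x_k-y_k|$ (the rescaled domain then exhausts all of $\mathbb{R}^n$) and invoking Liouville's theorem for $n>4$, the rescaled Grunau--Sweers local positivity for $n=4$, and the rescaled Nehari estimate for $n=3$ -- the exact analogue of Lemma~\ref{lemma3.1}. Your proposal does not separate this subcase from the half-space regime, and since you assign the Nehari argument to the opposite case, the merging-interior subcase is left unaddressed.
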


The proof is rather technical and will be postponed to Subsection~\ref{proof3.3}.
Assuming now Lemma~\ref{lemma3.3} being proved it is not too difficult that
in this case an additional zero of the Green's  function can be observed on the
boundary:

\begin{lemma}\label{lemma3.4}
$
\Delta_x \Delta_y G(x_\infty,y_\infty)=0.
$
\end{lemma}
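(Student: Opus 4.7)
The approach is to mimic the reasoning of Lemma~\ref{lemma3.2} but now Taylor expand in both variables, using that $x_\infty\neq y_\infty$ (Lemma~\ref{lemma3.3}) so we can work locally in disjoint charts and apply the joint $C^4$-convergence of Proposition~\ref{proposition3.1}.

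The plan is as follows. First, pick charts $\Phi_{k,i}$ around $x_\infty$ and $\Phi_{k,j}$ around $y_\infty$ as in Definition~\ref{definition1} (we can take $i\neq j$ by shrinking $\delta$, since $x_\infty\neq y_\infty$). Set $x_k=\Phi_{k,i}(x_k')$, $y_k=\Phi_{k,j}(y_k')$ with $x_k'\to x_\infty'$, $y_k'\to y_\infty'$, where $(x_\infty')_1=(y_\infty')_1=0$, and define
\[
\widetilde G_k(x',y'):=G_k(\Phi_{k,i}(x'),\Phi_{k,j}(y')),\qquad
\widetilde G(x',y'):=G(\Phi_i(x'),\Phi_j(y')).
\]
The Dirichlet boundary conditions on $G_k$ translate into $\widetilde G_k(x',y')=0$ and $\partial_{y_1'}\widetilde G_k(x',y')=0$ whenever $y_1'=0$, and symmetrically when $x_1'=0$.

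Next, I expand $\widetilde G_k(x_k',y_k')=0$ by Taylor. Freezing $x'=x_k'$ and expanding in $y_1'$ around $0$, the boundary conditions kill the zeroth and first order terms, giving
\[
0=\widetilde G_k(x_k',y_k')=\tfrac12(y_{k,1}')^2\,\partial_{y_1'}^2\widetilde G_k\bigl(x_k',\theta_k y_{k,1}',\bar{y_k'}\bigr)
\]
for some $\theta_k\in(0,1)$. Now freeze $\bar{y'}$ and $y_1'=\theta_k y_{k,1}'$, and expand the function $x'\mapsto \partial_{y_1'}^2\widetilde G_k(x',\theta_k y_{k,1}',\bar{y_k'})$ in $x_1'$ around $0$. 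Differentiating in $y_1'$ the identities $\widetilde G_k(0,\bar{x'},y')=0$ and $\partial_{x_1'}\widetilde G_k(0,\bar{x'},y')=0$ (valid for every $y'$ with $y_1'<0$) shows that this function vanishes to second order at $x_1'=0$. A second Taylor expansion then yields, after dividing by $(x_{k,1}')^2(y_{k,1}')^2\neq 0$,
\[
\partial_{x_1'}^2\partial_{y_1'}^2\widetilde G_k\bigl(\eta_k x_{k,1}',\bar{x_k'},\theta_k y_{k,1}',\bar{y_k'}\bigr)=0
\]
for some $\eta_k\in(0,1)$. Since $x_\infty\neq y_\infty$, the sequence of points stays uniformly away from the ``diagonal'' $D_{ij}$ of Proposition~\ref{proposition3.1}, so the $C^4_{loc}$ convergence given there allows me to pass to the limit:
\[
\partial_{x_1'}^2\partial_{y_1'}^2\widetilde G(x_\infty',y_\infty')=0.
\]

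Finally, I translate this back to the Euclidean statement. At boundary points, because $G$ and $\nabla_y G$ vanish on $\partial\Omega$ (in $y$), all second $y$-derivatives in chart coordinates that involve a tangential direction vanish, so a direct computation using the pull-back metric $g_j=\Phi_j^*\mathcal{E}$ gives
\[
(\Delta_y G)(x,\Phi_j(y'))\big|_{y_1'=0}=g_j^{11}(y')\,\partial_{y_1'}^2\widetilde G(x',y')\big|_{y_1'=0}.
\]
Regarding the right hand side as a function of $x$ and noting that it and its normal $x$-derivative vanish on $\partial\Omega$ (obtained by differentiating $\widetilde G(0,\bar{x'},y')\equiv 0$ and $\partial_{x_1'}\widetilde G(0,\bar{x'},y')\equiv 0$ twice in $y_1'$), the same argument applied in $x$ yields
\[
\Delta_x\Delta_y G(x_\infty,y_\infty)=g_i^{11}(x_\infty')\,g_j^{11}(y_\infty')\,\partial_{x_1'}^2\partial_{y_1'}^2\widetilde G(x_\infty',y_\infty')=0,
\]
since $g_i^{11}(x_\infty'),g_j^{11}(y_\infty')>0$.

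The only genuinely new point compared with Lemma~\ref{lemma3.2} is the need for joint $C^4$ control in both variables; this is exactly what Proposition~\ref{proposition3.1} was designed for, and its applicability is the reason why the case $x_\infty=y_\infty$ had to be ruled out separately in Lemma~\ref{lemma3.3}. The chain rule bookkeeping from chart to Euclidean Laplacian is straightforward because all lower-order boundary terms vanish identically by the Dirichlet conditions.
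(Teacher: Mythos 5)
Your proof follows the paper's argument essentially verbatim: work in disjoint charts around $x_\infty$ and $y_\infty$ (possible by Lemma~\ref{lemma3.3}), Taylor-expand $\widetilde G_k(x_k',y_k')=0$ first in $y_1'$ and then in $x_1'$, use the Dirichlet boundary conditions to kill the zeroth and first order terms in both expansions, pass to the limit via the joint $C^4$-convergence of Proposition~\ref{proposition3.1}, and then convert $\partial_{x_1'}^2\partial_{y_1'}^2\widetilde G(x_\infty',y_\infty')=0$ back to $\Delta_x\Delta_yG(x_\infty,y_\infty)=0$ using the boundary conditions. Your final translation step is a bit more explicit than the paper's, which simply invokes "the boundary conditions of $G$ and of $\tilde G$", but the underlying reasoning is the same.
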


\begin{proof}
According to Proposition~\ref{proposition2.1} we have that $G\in C^{4,\alpha}$
in a neighbourhood of $(x_\infty,y_\infty)$.
This proof is similar to
that of Lemma~\ref{lemma3.2}. We fix neighbourhoods such that
$x_\infty \in B_\delta (p_i), y_\infty\in B_\delta (p_j)$;
without loss of generality we may assume that $B_\delta (p_i)\cap
B_\delta (p_j)=\emptyset.$ Moreover we may
assume that $\forall k:\quad x_k \in B_\delta (p_i), y_k\in B_\delta (p_j)$.
To work in local charts we define
$$
x_k':=\Phi_{k,i}^{-1} (x_k) ,\quad x_\infty':=\Phi_{i}^{-1} (x_\infty),\quad
y_k':=\Phi_{k,j}^{-1} (y_k) ,\quad y_\infty':=\Phi_{j}^{-1} (y_\infty).
$$
Hence we have
$$
x_k'\in U_i\cap\{x_1<0\},\quad x_k'\to x_\infty'\in U_i\cap\{x_1=0\},
$$
$$
y_k'\in U_j\cap\{y_1<0\}, \quad y_k'\to y_\infty'\in U_j\cap\{y_1=0\}.
$$
Defining
$$
\tilde{G}_k: U_i\cap\{x_1\le 0\}\times U_j\cap\{y_1\le 0\}\to \mathbb{R},\,
\tilde{G}_k (x',y'):= G_k \left( \Phi_{k,i}(x') , \Phi_{k,j}(y') \right);
$$
$$
\tilde{G}: U_i\cap\{x_1\le 0\}\times U_j\cap\{y_1\le 0\}\to \mathbb{R},\,
\tilde{G} (x',y'):= G_k \left( \Phi_{i}(x') , \Phi_{j}(y') \right);
$$
we see that by assumption
$$
0=G_k (x_k,y_k)=\tilde{G}_k (x_k', y_k').
$$
Taylor's expansion  with respect to $y'$
and exploiting the boundary conditions for $\tilde{G}_k$
with respect to $y'$ shows that
for each $k$ there exists a suitable $\theta_k\in(0,1)$ such that
$$
\partial_{y_1}^2 \tilde{G}_k (x_{k,1}',\bar{x_k'},\theta_k y_{k,1}',
\bar{y_k'})=0. $$
Now, we use Taylor's expansion for this expression with respect to $x'$
and obtain with suitable $\tau_k\in (0,1)$:
\begin{eqnarray*}
0&=&  \partial_{y_1}^2 \tilde{G}_k (x_{k,1}',\bar{x_k'},\theta_k
y_{k,1}',\bar{ y_k'}) \\ &=&\partial_{y_1}^2\tilde{G}_k (0,\bar{x_k'},
\theta_k y_{k,1}' ,\bar{ y_k'})          +\partial_{x_1}
\partial_{y_1}^2\tilde{G}_k (0,\bar{x_k'},  \theta_k y_{k,1}' ,\bar{
y_k'})x_{k,1}'\\
    &&\qquad
+\frac{1}{2}\partial_{x_1}^2\partial_{y_1}^2\tilde{G}_k (\tau_k
x_{k,1}',\bar{x_k'},
 \theta_k y_{k,1}' ,\bar{y_k'})(x_{k,1}')^2\\
&=&\frac{1}{2}\partial_{x_1}^2\partial_{y_1}^2\tilde{G}_k
(\tau_k x_{k,1}',\bar{x_k'},              \theta_k y_{k,1}' ,
\bar{y_k'})(x_{k,1}')^2
\end{eqnarray*}
so that
$$
\partial_{x_1}^2\partial_{y_1}^2\tilde{G}_k (\tau_k x_{k,1}',\bar{x_k'},
           \theta_k y_{k,1}' ,\bar{ y_k'})=0.
$$
Since by Proposition~\ref{proposition3.1} we have $C^4$
convergence of $\tilde{G}_k$ to $\tilde{G}$ it follows
that
$$
\partial_{x_1}^2\partial_{y_1}^2\tilde{G}(x_\infty',y_\infty') =0.
$$
Taking into account the boundary conditions of $G$ and of $\tilde{G}$, back in the
original variables we see that
$$
\Delta_x \Delta_y G(x_\infty,y_\infty)=0
$$
thereby proving the claim.
\qed\end{proof}

\subsection{Proof of Lemma~\ref{lemma3.3}}
\label{proof3.3}

We assume by contradiction that $\lim_{k\to\infty}x_k=x_\infty=y_\infty=\lim_{k\to\infty}y_k$.
We choose a neighbourhood $B_{\delta} (p_i)\ni x_\infty$ and may assume
that $\forall k: x_k,y_k\in B_{\delta} (p_i)\cap\Omega_k$. As before we introduce
$$
x_k':=\Phi_{k,i}^{-1} (x_k) ,\quad
y_k':=\Phi_{k,i}^{-1} (y_k), \quad x_\infty':=\Phi_{i}^{-1} (x_\infty)
$$
so that we have
$$
x_k', y_k'\in U_i\cap\{x_1<0\},
\quad x_k'\to x_\infty',\ y_k'\to x_\infty'\in U_i\cap\{x_1=0\}.
$$

We distinguish two further cases according to whether the distance between
$x_k$ and $y_k$ converges faster to $0$ than the distance of these points to
the boundary or vice verca.

\medskip\noindent
{\it First case: $|x_k-y_k|=o\left(\max(d(x_k,\partial
\Omega_k),d(y_k,\partial \Omega_k))\right)$.} After possibly
interchanging $x_k$ and $y_k$ and passing to a subsequence we may
assume that
$$
|x_k-y_k|=o(d(x_k,\partial \Omega_k)).
$$
This case is much simpler than the second case below and quite similar to the
case where both points converge in the interior treated in Subsection~\ref{interiorinterior}.
Like there  we treat the case $n>4$ first.
In this case, we consider the rescaled Green's  functions:
\begin{equation*}
\tilde{G}_k (z):=|x_k-y_k|^{n-4} G_k (x_k,x_k+|x_k-y_k|z).
\end{equation*}
These are is certainly defined for
$|z|<\frac{d(x_k,\partial \Omega_k)}{|x_k-y_k|}$, which converges
to $\infty$ as $k\to \infty$. For this reason, we may now directly copy the
reasoning of Subsection~\ref{interiorinterior} and obtain that
$$
\tilde{G}_k\to\tilde{G} \mbox{\ in\ } C^4_{loc} (\mathbb{R}^n \setminus \{0\})
\mbox{\ with\ } \tilde{G}(z)=\gamma_n|z|^{4-n}.
$$
One should observe that also here the property of the Green's  functions to
be uniformly bounded by $C|x-y|^{4-n}$ is used.
According to the choice (\ref{3.choice})
of $x_k,y_k$ and the definition of $\tilde{G}_k$ we have that
$$
\tilde{G}_k \left( \frac{y_k-x_k}{|x_k-y_k|}\right)=
|x_k-y_k|^{n-4} G_k \left(x_k,y_k\right)=0.
$$
Hence there exists at least one point $\zeta\in\mathbb{R}^n$ with
$$
|\zeta|=1 \mbox{\ and\ } 0=\tilde{G}(\zeta)=\gamma_n|\zeta|^{4-n},
$$
which is false.

We now treat the case $n=4$ and proceed similarly as in the proof
of Lemma~\ref{lemma3.1}. Rescaling the result of
\cite{GrunauSweers3} shows the existence of $\delta>0$,
$c_3>0$  such that for $x,y\in \Omega_k$ with $|x-y|\le \delta
d (x,\partial \Omega_k) $, one has (uniformly in $k$) that
\begin{equation}\label{3.20}
H_k (x,y)\ge-\frac{1}{c_3}\log \frac{|x-y|}{ d (x,\partial \Omega_k)}.
\end{equation}
As it was shown in the proof of Lemma~\ref{lemma3.1}, $G_k-H_k$ is bounded uniformly in $k$.
Hence, there exists a constant $c_4$ such that for  $x,y\in \Omega_k$ we have
$$
|x-y|\le \delta \operatorname{d} (x,\partial \Omega_k) \quad\Rightarrow\quad
G_k (x,y)\ge-\frac{1}{c_3}\log \frac{|x-y|}{ d (x,\partial \Omega_k)}
-c_4.
$$
Since $|x_k-y_k|=o(d(x_k,\partial \Omega_k))$ we obtain
$$
0= G_k (x_k,y_k)\to \infty \qquad (k\to \infty).
$$
This is again false and proves the claim for $n=4$.

Finally we discuss the case $n=3$. Rescaling the result of  Nehari \cite{Nehari}
shows the existence of $\delta>0$, $\varepsilon >0$ such that for $x,y\in \Omega_k$ with
$|x-y|\le \delta  d (x,\partial \Omega_k) $, one has
(uniformly in $k$) that
\begin{equation}\label{3.21}
H_k (x,y)\ge \varepsilon d (x,\partial \Omega_k).
\end{equation}
Making use of elliptic theory as in the proof of Lemma~\ref{lemma3.1} and exploiting
the fact that $n=3$ yields that
$\|\left( G_k(\,.\, ,y_k) -H_k (\,.\, ,y_k)
\right)\|_{C^2(\overline{\Omega_k})} \le c_5$ uniformly in $k$. Since
$|x_k-y_k|\le \delta  d (x_k,\partial \Omega_k) $ for $k$ large enough we
conclude that
$$
0=G_k (x_k,y_k)\ge \varepsilon d (x_k,\partial \Omega_k)-c_6 d (x_k,\partial
\Omega_k)^2 ,
$$
which becomes again false for $k\to \infty$.

\medskip\noindent
{\it Second case: $|x_k-y_k|\not=o(\max(d(x_k,\partial \Omega_k),d(y_k,\partial \Omega_k))$.}
After selecting a subsequence we may assume that there is $\tau>0$ such that
$$
|x_k-y_k|\ge \tau d(x_k,\partial \Omega_k)\mbox{\ and\ }
|x_k-y_k|\ge \tau
d(y_k,\partial \Omega_k). $$
We define
$$
\rho_k :=\frac{(x_k')_1}{|x_k'-y_k'|}<0 \mbox{\ and\ } O(1)
$$
and after  selecting a further subsequence we may assume that
$$
\lim_{k\to\infty} \rho_k =:\rho\le 0.
$$
Again, we will introduce a rescaled family of Green's  functions.
For any $R>0$ and $z,\zeta\in B_R\cap \mathbb{R}^n_-$,
\begin{equation}\label{3.25}
\begin{array}{rcl}
\tilde{G}_k (z,\zeta)&:=& |x_k'-y_k'|^{n-4} G_k \big( \Phi_{k,i}
(x_k'  +|x_k'-y_k'|  (z- \rho_k \vec{e}_1)  ), \\
&&\qquad \qquad\qquad\qquad\Phi_{k,i}
(x_k'  +|x_k'-y_k'|  (\zeta- \rho_k \vec{e}_1)  )
\big).
\end{array}
\end{equation}
Moreover,
$\tilde{G}_k(z,\cdot)=\partial_{\zeta_1}\tilde{G}_k(z,\cdot)=0$ on
$B_R(0)\cap\partial\mathbb{R}^n_-$. According to (\ref{4.1}) and
Proposition~\ref{proposition2}, we see that uniformly in $k$, $z$
and $\zeta$
\begin{equation}\label{6.10.a}
\left| \tilde{G}_k (z,\zeta) \right|  \le C |z-\zeta|^{4-n},\quad
\mbox{\ provided that $n>4$.}
\end{equation}
If $n=3,4$ we conclude first that
$$
\left| \nabla \tilde{G}_k (z,\zeta) \right|  \le C\cdot
\left\{ \begin{array}{ll}
|z-\zeta|^{-1},\quad &\mbox{\ if\ } n=4,\\
1,&\mbox{\ if\ } n=3.
\end{array}\right.
$$
Upon integration  we obtain that
\begin{equation}\label{3.251}
\left|  \tilde{G}_k (z,\zeta) \right|  \le C\cdot
\left\{ \begin{array}{ll}
\left( 1+|\log |z-\zeta| |+ \log(1+ |z|)+ \log(1+|\zeta|) \right),\quad &\mbox{\ if\ } n=4,\\
\left( 1+|z|+ |\zeta|\right) ,&\mbox{\ if\ } n=3.
\end{array}\right.
\end{equation}
The points
$x_k$ and $y_k$ were chosen such that $G_k(x_k,y_k)=0$, which
reads in new coordinates
\begin{equation}\label{3.26}
 \tilde{G}_k \left( \rho_k\vec{e}_1,  \frac{y_k'-x_k'}{|x_k'-y_k'|}+ \rho_k\vec{e}_1\right)=0.
\end{equation}
In order to formulate the differential equation satisfied by $\tilde{G}_k$,
we denote by ${\mathcal E}=\left(\delta_{ij}\right)$ the Euclidean metric
and
$$
g_{k,i} (z):=\Phi_{k,i}^* ({\mathcal E})(x_k'  +|x_k'-y_k'|  (z- \rho_k \vec{e}_1)  )
$$
its translated and rescaled pullback with respect to the coordinate charts $\Phi_{k,i}$.
Moreover, we introduce its limit, the constant metric
$$
g_{\infty,i}:=\Phi_{i}^* ({\mathcal E})(x_\infty).
$$
First, we keep $z\in \mathbb{R}^n_-$ fixed and consider
$\tilde{G}_{k}(z,\, .\, )=:\tilde{G}_{k,z}(\, .\, )$
as function in the second variable. For $\zeta\in B_R(0)\cap \mathbb{R}^n_-\setminus \{z\}$
we have that for $k$ large enough, the following boundary value problem is satisfied:
\begin{equation}
\left\{\begin{array}{l}
\Delta^2_{g_{k,i},\zeta} \tilde{G}_k (z,\zeta)\\
\qquad +|x_k'-y_k'|^4
(a_k\circ \Phi_{k,i})
\left( x_k'  +|x_k'-y_k'|  (\zeta- \rho_k \vec{e}_1)\right)\tilde{G}_k (z,\zeta)=0\\
\hspace*{4.44cm} \mbox{\ for\ } \zeta_1 <0,\ \zeta\not=z,\\
 \tilde{G}_k (z,\zeta)=\partial_{\zeta_1} \tilde{G}_k (z,\zeta)=0\quad \mbox{\ for\ }\zeta_1 =0.
\end{array}\right.
\end{equation}
For $k\to \infty$, using \cite{ADN}, we find $\tilde{G}_z=\tilde{G}(z,\, .\, )\in
                  C^4 \left( \overline{\mathbb{R}^n_-}\setminus \{z\}\right)$
such that
\begin{equation}\label{3.27}
\tilde{G}_{k}(z,\, .\, )\to \tilde{G}_z\mbox{\ in\ }
C^4_{loc} \left(\overline{ \mathbb{R}^n_-}\setminus \{z\}\right),
\quad \Delta^2_{g_\infty,\zeta}\tilde{G}(z,\zeta)=0 \ (z\not=\zeta);
\end{equation}
\begin{equation}\label{3.271}
\left|  \tilde{G} (z,\zeta) \right|  \le C\cdot
\left\{ \begin{array}{ll}
 |z-\zeta|^{4-n},\quad &\mbox{\ if\ } n>4,\\
\left(1+|\log
|z-\zeta|\,|+\log(1+|z|)\,+\log(1+|\zeta|)\,\right),\quad &\mbox{\
if\ } n=4,\\ \left( 1+|z|+|\zeta| \right),&\mbox{\ if\ } n=3;
\end{array}\right.
\end{equation}
\begin{equation}\label{3.272}
\left| \nabla \tilde{G} (z,\zeta) \right|  \le C\cdot
\left\{ \begin{array}{ll}
|z-\zeta|^{-1},\quad &\mbox{\ if\ } n=4,\\
1,&\mbox{\ if\ } n=3.
\end{array}\right.
\end{equation}
In order to calculate the differential equation satisfied by $\tilde{G}$ near $\zeta=z$,
we introduce
$$
\varphi\in C^{\infty}_c \left( \overline{\mathbb{R}^n_-}\right),\qquad
\varphi=\partial_1 \varphi=0 \mbox{\ on\ }\partial\mathbb{R}^n_-
$$
and let $\varphi_k\in C^{4,\alpha} \left( \overline{\Omega_k}\right)$
such that
$$
\varphi(z)=\varphi_k\circ\Phi_{k,i} \left(x_k'  +|x_k'-y_k'|  (z- \rho_k \vec{e}_1)  \right)
\mbox{\ for\ }
z\in \tilde{\Omega}_k
$$
$$
\varphi_k=\partial_{\nu} \varphi_k=0 \mbox{\ on\ } \partial\Omega_k;
$$
where we denote
$$
\tilde{\Omega}_k:=\rho_k\vec{e}_1-\frac{x_k'}{|x_k'-y_k'| } +\frac {1}{|x_k'-y_k'| }
\left( U_i\cap \mathbb{R}^n_-\right) .
$$
By means of the representation formula and the corresponding Green's  function
we see that for $z\in   \mathbb{R}^n_-$ and $k$ large enough
\begin{eqnarray*}
\varphi(z) &=& \varphi_k\left(\Phi_{k,i} \left(x_k'  +|x_k'-y_k'|  (z- \rho_k \vec{e}_1)  \right)\right)\\
&=&\int_{\Omega_k} G_k \left( \Phi_{k,i} \left(x_k'  +|x_k'-y_k'|  (z- \rho_k \vec{e}_1)  \right),
y\right)\left( \Delta^2 \varphi_k +a_k \varphi_k \right)\, dy\\
&=&\int_{\Phi_{k,i}(U_i\cap\{ \eta_1<0  \})}
\hspace*{-1cm}
    G_k \left( \Phi_{k,i} \left(x_k'  +|x_k'-y_k'|  (z- \rho_k \vec{e}_1)  \right),
y\right)\left( \Delta^2 \varphi_k +a_k \varphi_k \right)\, dy\\
&=&\int_{U_i\cap\{ \eta_1<0  \}}G_k \left(
 \Phi_{k,i} \left(x_k'  +|x_k'-y_k'|  (z- \rho_k \vec{e}_1)  \right),
\Phi_{k,i}(\eta)\right)\\
&&\quad \cdot \left(\Delta^2_{\Phi_{k,i}^*({\mathcal E})}
(\varphi_k \circ\Phi_{k,i})
+(a_k \circ \Phi_{k,i})
 (\varphi_k \circ\Phi_{k,i})\right) (\eta)
\left|  \operatorname{Jac}\Phi_{k,i} (\eta)\right|\, d\eta\\
&=&\int_{\tilde{\Omega}_k} |x_k'-y_k'| ^{4-n} \tilde{G}_k (z,\zeta) |x_k'-y_k'|^{-4}\\
&&\quad \cdot
 \left( \Delta^2_{g_{k,i}}  +|x_k'-y_k'|^{4}  a_k \left(\Phi_{k,i}
\left( x_k'  +|x_k'-y_k'|  (\zeta- \rho_k \vec{e}_1)\right)\right)
\right) \varphi (\zeta) \\
&& \quad \cdot |x_k'-y_k'|^n\left|  \operatorname{Jac}\Phi_{k,i} \left(
x_k'  +|x_k'-y_k'|  (\zeta- \rho_k \vec{e}_1)
\right)\right|\, d\zeta\\
&=&\int_{\tilde{\Omega}_k}\tilde{G}_k (z,\zeta)\\
&&\quad \cdot
 \left( \Delta^2_{g_{k,i}}  +|x_k'-y_k'|^{4}  a_k \left(\Phi_{k,i}
\left( x_k'  +|x_k'-y_k'|  (\zeta- \rho_k \vec{e}_1)\right)\right)
\right) \varphi (\zeta) \\
&& \quad \cdot \left|  \operatorname{Jac}\Phi_{k,i} \left(
x_k'  +|x_k'-y_k'|  (\zeta- \rho_k \vec{e}_1)
\right)\right|\, d\zeta.
\end{eqnarray*}
Observing  (\ref{6.10.a}), (\ref{3.251})
and passing to the limit
we obtain for $z\in \mathbb{R}^n_-$:
$$
\varphi (z) =\int_{\mathbb{R}^n_-}\tilde{G}  (z,\zeta) \Delta^2_{g_{\infty,i}} \varphi(\zeta)
\left|  \operatorname{Jac}\Phi_i (x_{\infty}') \right|\, d\zeta.
$$
We introduce the linear bijection $L=d\Phi_i (x_{\infty}')$, the half space $P:=L\left(
\mathbb{R}^n_-\right) $ and obtain for $z\in \mathbb{R}^n_-$:
\begin{equation}\label{3.30}
\varphi (z) =\int_{\mathbb{R}^n_-}\tilde{G}  (z,\zeta) \Delta^2_{L^*{\mathcal E}} \varphi(\zeta)
\left|  \operatorname{det}(L) \right| d\zeta
=\int_{P} \tilde{G}  (z,L^{-1} (\eta) )\Delta^2\left( \varphi\circ L^{-1} \right)  d\eta.
\end{equation}
Finally we consider a rotation $\sigma\in O(n)$ such that $\sigma(P)=\mathbb{R}^n_-$
so that
$\sigma\circ L (\mathbb{R}^n_-)=\mathbb{R}^n_-$.
For arbitrary
$$
\psi\in C^{\infty}_c \left( \overline{\mathbb{R}^n_-}\right),\quad
\mbox{ with }
\psi=\partial_1 \psi=0 \mbox{\ on\ }\partial\mathbb{R}^n_-
$$
and $\tilde{x}\in {\mathbb{R}^n_-}$
we may take $\varphi=\psi\circ \sigma \circ L$ and $z=(\sigma\circ L)^{-1}
(\tilde{x})$. We obtain from (\ref{3.30}) since the Laplacian is invariant
under orthogonal transformations that for $\tilde{x}\in {\mathbb{R}^n_-}$:
\begin{eqnarray*}
\psi (\tilde{x}) &=& \left( \psi\circ \sigma \circ L\right)((\sigma\circ
L)^{-1} (\tilde{x})) \\
&=& \int_{P=\sigma^{-1} ({\mathbb{R}^n_-}) } \tilde{G}
\left((\sigma\circ L)^{-1} (\tilde{x}),L^{-1} (\eta) \right)
\Delta^2\left( \psi\circ \sigma \right)(\eta) \, d\eta\\ 
&=&\int
_{\mathbb{R}^n_-} \tilde{G}  \left((\sigma\circ L)^{-1} (\tilde{x}),
(\sigma\circ
L)^{-1} (\eta)\right)                \Delta^2\psi(\eta) \, d\eta.
\end{eqnarray*} This shows that in the sense of distributions
\begin{equation}\label{3.31}
\Delta^2_{\tilde{y}} \bar{G} (\tilde{x},\, .\, )=\delta_{\tilde{x} },
\end{equation}
where we have defined
\begin{equation}\label{3.32}
\bar{G} (\tilde{x},\tilde{y} ):= \tilde{G}
\left((\sigma\circ L)^{-1} (\tilde{x}),(\sigma\circ L)^{-1} (\tilde{y})\right).
\end{equation}
Moreover, for fixed $\tilde{x}\in  {\mathbb{R}^n_-}$ one concludes
with the help of (\ref{3.271}) and (\ref{3.272}) that
\begin{equation}\label{3.34}
| \bar{G} (\tilde{x},\tilde{y} )| \le C \cdot
\left\{ \begin{array}{ll}
 |\tilde{x}-\tilde{y}|^{4-n},\quad &\mbox{\ if\ } n>4,\\
\left(1+|\log |\tilde{x}-\tilde{y}|\,|+\log(1+|\tilde{x}|)\,
+\log(1+|\tilde{y}|)\,\right),\quad &\mbox{\ if\ }
n=4,\\
\left( 1+|\tilde{x}|+|\tilde{y}| \right),&\mbox{\ if\ } n=3;
\end{array}\right.
\end{equation}
\begin{equation}\label{3.341}
| \nabla \bar{G} (\tilde{x},\tilde{y} )|  \le C\cdot
\left\{ \begin{array}{ll}
|\tilde{x}-\tilde{y}|^{-1},\quad &\mbox{\ if\ } n=4,\\
1,&\mbox{\ if\ } n=3.
\end{array}\right.
\end{equation}
We denote by $H$ the biharmonic Green's  function in $\mathbb{R}^n_-$, which thanks
to Boggio~\cite{Boggio} is known explicitly and known to be positive
-- see Lemma~\ref{lemma3.6} below. We prove:

\begin{lemma}\label{lemma3.5}
$\forall x,y\in  \mathbb{R}^n_-, x\not=y:\qquad  \bar{G} (x,y )=H(x,y )$.
\end{lemma}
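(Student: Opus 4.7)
The plan is to show that for fixed $\tilde x\in\mathbb{R}^n_-$ the difference
$v(\tilde y):=\bar G(\tilde x,\tilde y)-H(\tilde x,\tilde y)$
solves a homogeneous Dirichlet biharmonic problem on $\mathbb{R}^n_-$ with sub-quadratic growth, and then to invoke a Liouville-type uniqueness result to conclude $v\equiv 0$.

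\textbf{Equation.} By \eqref{3.31} one has $\Delta^2_{\tilde y}\bar G(\tilde x,\cdot)=\delta_{\tilde x}$ in $\mathcal{D}'(\mathbb{R}^n_-)$, and the same identity holds for $H(\tilde x,\cdot)$ by its defining property as the Dirichlet biharmonic Green's function on $\mathbb{R}^n_-$. Subtraction yields $\Delta^2 v=0$ in $\mathcal{D}'(\mathbb{R}^n_-)$, and interior elliptic regularity gives $v\in C^\infty(\mathbb{R}^n_-)$.

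\textbf{Boundary conditions.} The $C^4_{\mathrm{loc}}$-convergence in \eqref{3.27} transfers the boundary conditions $\tilde G_k(z,\cdot)=\partial_{\zeta_1}\tilde G_k(z,\cdot)=0$ on $\{\zeta_1=0\}$ to $\tilde G$. Since $\tilde G(z,\cdot)|_{\partial}=0$, all tangential derivatives vanish on $\{\zeta_1=0\}$, and the condition $\partial_{\zeta_1}\tilde G(z,\cdot)=0$ takes care of the normal derivative, so every first-order derivative of $\tilde G(z,\cdot)$ vanishes on $\partial\mathbb{R}^n_-$. Because $\sigma\circ L$ is a linear isomorphism of $\mathbb{R}^n_-$ preserving $\partial\mathbb{R}^n_-$, the definition \eqref{3.32} combined with the chain rule gives $\bar G(\tilde x,\cdot)=\partial_{\tilde y_1}\bar G(\tilde x,\cdot)=0$ on $\partial\mathbb{R}^n_-$; Boggio's formula for $H$ provides the matching conditions. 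Hence $v=\partial_{\tilde y_1}v=0$ on $\partial\mathbb{R}^n_-$.

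\textbf{Growth.} From \eqref{3.34} and the explicit (and sharper) bounds known for Boggio's Green's function in the half-space,
\[
|v(\tilde y)|\le C(\tilde x)\cdot\begin{cases} |\tilde y|^{4-n}, & \text{if } n>4,\\ 1+\log(1+|\tilde y|), & \text{if } n=4,\\ 1+|\tilde y|, & \text{if } n=3,\end{cases}
\]
as $|\tilde y|\to\infty$. In particular $v(\tilde y)=o(|\tilde y|^2)$ for every admissible $n\ge 3$.

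\textbf{Liouville step.} A biharmonic function on $\mathbb{R}^n_-$ with zero Dirichlet boundary data and sub-quadratic growth at infinity must vanish identically. For $n\ge 5$ this reduces to the standard uniqueness result for decaying biharmonic functions on a half-space. For $n=3,4$, where $v$ may genuinely grow at infinity, one argues by Fourier transform in the tangential variables $\bar y$: $\hat v(x_1,\xi)$ satisfies $(\partial_1^2-|\xi|^2)^2\hat v=0$, and the polynomial control as $x_1\to-\infty$ eliminates the modes proportional to $e^{-|\xi|x_1}$, leaving $\hat v(x_1,\xi)=(A(\xi)+B(\xi)x_1)e^{|\xi|x_1}$ for $\xi\ne 0$; the Dirichlet conditions at $x_1=0$ then force $A=B=0$. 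Hence $\hat v$ is supported at $\xi=0$, so $v$ is polynomial in $\bar y$, and the growth bound together with $v=\partial_1 v=0$ on $\{x_1=0\}$ forces this polynomial to be identically zero. Thus $v\equiv 0$ and $\bar G=H$.

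\textbf{Main obstacle.} The delicate point is precisely this Liouville step in dimensions $n=3,4$: the growth of $v$ is not integrable, so one cannot simply test the equation against $v$ and integrate by parts. The Fourier/reflection classification is the natural way around it, but requires care because $v$ is only a tempered distribution; an alternative is to apply pure tangential differentiations to $v$, reducing to a bounded biharmonic function on the half-space with zero Dirichlet data where a classical Liouville applies, and then integrating back.
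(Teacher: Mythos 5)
Your setup (forming $v = \bar G(\tilde x,\cdot) - H(\tilde x,\cdot)$, identifying the homogeneous biharmonic Dirichlet problem in $\mathbb{R}^n_-$, and controlling growth from \eqref{3.34}) matches the paper, and the overall structure is sound. Where you diverge is in the Liouville step. The paper uses the Duffin--Huber reflection principle: it extends $v$ to an \emph{entire} biharmonic function $\psi^*$ on all of $\mathbb{R}^n$ by the explicit odd-type reflection, establishes derivative estimates $|\nabla^j \psi| \lesssim |y|^{4-n-j}$ (for $n>4$) and $|D^{2+j}\psi| \lesssim |y|^{2-n-j}$ (for $n=3,4$) by a rescaling/contradiction argument with interior Schauder estimates, and then applies the Liouville theorem for entire polyharmonic functions to $\psi^*$ (or to $D^2\psi^*$). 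You propose instead a tangential Fourier decomposition. Both routes are legitimate, but they buy different things. The reflection argument is elementary and entirely pointwise: it reduces a half-space rigidity statement to a well-documented whole-space Liouville theorem, and the scaling trick neatly upgrades the a~priori sub-quadratic growth into decay of enough derivatives. Your Fourier route is structurally natural for half-space ODE problems and would handle all $n$ at once, but as you acknowledge it needs distributional care: the tangential Fourier transform of $v$ is only a tempered distribution, the ODE $(\partial_1^2-|\xi|^2)^2\hat v=0$ must be solved in a distributional sense, one must localize with cutoffs $\chi(\xi)$ supported off $\xi=0$ to eliminate the $e^{-|\xi|x_1}$ branch, and the residual mode supported at $\xi=0$ (which yields tangential polynomials) has to be killed by a separate elementary argument using both the growth bound and the Dirichlet conditions. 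Your alternative of differentiating tangentially to reduce to bounded biharmonic functions in the half-space is closer in spirit to the paper's method, since the natural proof of the resulting bounded half-space Liouville statement is exactly the reflection plus entire Liouville chain the paper employs; the paper in fact does something similar when $n=3,4$ by passing to $D^2\psi^*$. In short, your outline is correct and interesting, but as written the Fourier step is a plan rather than a proof, whereas the paper's reflection argument closes all the gaps concretely.
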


\begin{proof}
In what follows we keep $x\in  \mathbb{R}^n_-$ fixed. Both
$\bar{G} (x,\, .\,  )$ and $H(x,\, .\,  )$ satisfy the biharmonic
equation with the $\delta$-distribution $\delta_x$ as right hand
side and zero Dirichlet boundary conditions on $\{ y_1 =0\}$. We
let $\psi_x:=\bar{G} (x,\, .\,  )-H(x,\, .\, )$. Hence,
$$
\psi=\psi_x\in C^{\infty} \left( \overline{\mathbb{R}^n_-} \right)
$$
solves
\begin{equation}\label{3.35}
\left\{ \begin{array}{ll}
\Delta^2 \psi =0 &\mbox{\ in\ } \mathbb{R}^n_-,\\
\psi =\partial_1 \psi =0&\mbox{\ on\ }\{ y_1 =0\}.
\end{array}\right.
\end{equation}
Moreover, according to (\ref{3.34}-\ref{3.341}) and (\ref{3.45}) below we have that
\begin{equation}\label{3.36}
\forall y\in \mathbb{R}^n_- :\qquad
|\psi (y) | \le C  \cdot
\left\{ \begin{array}{ll}
 |y|^{4-n},\quad &\mbox{\ if\ } n>4,\\
\left(1+|\log|y|\,|\right),\quad &\mbox{\ if\ } n=4,\\
\left( 1+|y| \right),&\mbox{\ if\ } n=3;
\end{array}\right.
\end{equation}
\begin{equation}\label{3.361}
| \nabla \psi (y)|  \le C\cdot
\left\{ \begin{array}{ll}
|y|^{-1},\quad &\mbox{\ if\ } n=4,\\
1,&\mbox{\ if\ } n=3;
\end{array}\right.
\end{equation}
where $ C=C(x)$.
According to \cite{Duffin,Huber}
$$
\psi^*(y):=\left\{ \begin{array}{ll}
\psi(y)&\mbox{\ if\ } y_1\le 0,\\
-\psi(-y_1,\bar{y})-2y_1\frac{\partial}{\partial y_1} \psi (-y_1,\bar{y})-
         y_1^2\Delta\psi(-y_1,\bar{y}),&\mbox{\ if\ } y_1 >  0,
\end{array} \right.
$$
$\psi^*\in C^4\left( \mathbb{R}^n\right)$  is an entire biharmonic function. We consider now first the
case $n>4$. Below we will prove that  (\ref{3.35}) and
(\ref{3.36}) imply that also
\begin{equation}\label{3.37}
\forall j=1,2:\,
\forall y\in \mathbb{R}^n_- :\quad
|\nabla^j \psi (y) | \le C |y|^{4-n-j},\, \mbox{\ where\ } C=C(x).
\end{equation}
This immediately gives that $|\psi^* (y) | \le C |y|^{4-n}$ and in
particular that $\psi^*$ is a bounded entire biharmonic
function. Again, Liouville's
theorem for biharmonic functions \cite[p. 19]{Nicolesco} yields
that $\psi^* (y) \equiv 0$ so that the claim of the lemma follows,
provided $n>4$.

If $n=3,4$ we shall prove below that  for $j=0,1,2$
\begin{equation}\label{3.371} 
\forall y\in \mathbb{R}^n_- :\qquad 
|D^{2+j} \psi (y) | \le C |y|^{2-n-j},\quad\mbox{\ where\ } C=C(x). 
\end{equation} 
As above $\psi^*$ is an entire  biharmonic function and so are $D\psi^*$ and 
$D^2\psi^*$. Since $|D^2\psi^*(y)|\le C(1+|y|)^{2-n}$, it follows that 
$D^2\psi^*(x)\equiv 0$. In view of the boundary 
conditions in (\ref{3.35}) we come up with $\psi^*(y) \equiv 0$ 
also in the case $n=3,4$.

It remains to prove (\ref{3.37}) and (\ref{3.371}).
We consider first $n>4$.
Assume by contradiction that there exists a sequence $(y_{\ell})\subset\mathbb{R}^n_-$
such that $|\nabla^j \psi (y_{\ell} ) | \cdot |y_{\ell}|^{n+j-4}\to \infty$ for $\ell \to\infty$.
Then
$$
\tilde{\psi}_{\ell} (y) :=|y_{\ell}|^{n-4}\psi\left( y_{\ell}-y_{\ell,1}\vec{e}_1+|y_{\ell}|y\right)
$$
would solve
\begin{equation}\label{3.38}
\left\{ \begin{array}{ll}
\Delta^2 \tilde{\psi}_{\ell} =0 &\mbox{\ in\ } \mathbb{R}^n_-,\\
\tilde{\psi}_{\ell} =\partial_1 \tilde{\psi}_{\ell} =0&\mbox{\ on\ }\{ y_1 =0\}.
\end{array}\right.
\end{equation}
{From}  the assumption we conclude that
\begin{equation}\label{3.39}
\left|\nabla^j \tilde{\psi}_{\ell}\left(\frac{y_{\ell,1}}{|y_{\ell}|}\vec{e}_1 \right)\right|
=|y_{\ell}|^{n+j-4}|\nabla^j \psi (y_{\ell} ) |\to\infty.
\end{equation}
On the other hand,
\begin{equation}\label{3.391}
|\tilde{\psi}_{\ell} (y)| \le C |y_{\ell}|^{n-4}\left| y_{\ell}-y_{\ell,1}\vec{e}_1+|y_{\ell}|y\right|^{4-n}
\le C\left|     \frac{y_\ell}{|y_\ell|} +y- \frac{y_{\ell,1}}{|y_\ell|}\vec{e}_1\right|^{4-n},
\end{equation}
so that $\tilde{\psi}_{\ell}$ remains bounded in a neighbourhood
of $\frac{y_{\ell,1}}{|y_\ell|}\vec{e}_1$ in
$\overline{\mathbb{R}^n_-}$. Local Schauder estimates
\cite[Theorem 7.3]{ADN} yield
$$
\left|\nabla^j \tilde{\psi}_{\ell}\left(\frac{y_{\ell,1}}{|y_{\ell}|}\vec{e}_1 \right)\right|\le C,
$$
thereby contradicting (\ref{3.39}). This proves (\ref{3.37}).

As for (\ref{3.371}), i.e. in particular $n=3,4$,
the proof is quite similar since we can already make
use of the gradient estimates (\ref{3.361}). Instead of (\ref{3.391}) one has
to make use of
$$
|\nabla \tilde{\psi}_{\ell} (y)| \le
C |y_{\ell}|^{n-3}\left| y_{\ell}-y_{\ell,1}\vec{e}_1+|y_{\ell}|y\right|^{3-n}
\le C\left|     \frac{y_\ell}{|y_\ell|} +y- \frac{y_{\ell,1}}{|y_\ell|}\vec{e}_1\right|^{3-n},
$$
so  that
$\nabla\tilde{\psi}_{\ell}$ remains bounded uniformly outside
$\frac{y_\ell}{|y_\ell|}- \frac{y_{\ell,1}}{|y_\ell|}\vec{e}_1$.
Therefore, since $\tilde{\psi}_{\ell}$ vanishes on
$\partial\mathbb{R}_-^n$, we get that $\tilde{\psi}_{\ell}$ is
bounded in a neighbourhood of
$\frac{y_{\ell,1}}{|y_\ell|}\vec{e}_1$ in
$\overline{\mathbb{R}^n_-}$. The proof  of the present lemma is
complete.
\qed\end{proof}

In order to show that the present case $x_\infty=y_\infty\in\partial\Omega$
cannot occur we collect some basic facts on the biharmonic Green's
function in the half space; modulo a simple conformal transformation, cf.~\cite[p. 126]{Boggio}:

\begin{lemma}\label{lemma3.6}
The biharmonic Green's  function in $\mathbb{R}^n_-$ is given by
\begin{equation}\label{3.45}
\forall x,y\in \mathbb{R}^n_-: H(x,y)= \frac{1}{4ne_n} \, |x-y|^{4-n}
\int\limits_1^{ \left|  x^*-y  \right|  / |x-y| }
    (v^2 -1) v^{1-n} \, dv,
\end{equation}
where $x^*=(-x_1,\bar{x})$. {From} this
it follows by direct calculation:
\begin{eqnarray}
\forall x,y\in \mathbb{R}^n_-, \ x\not=y:&&\quad H(x,y) >0;
\label{Green1}\\
\forall x\in \mathbb{R}^n_-,y\in\partial  \mathbb{R}^n_-:&&\quad  \Delta_y H(x,y)>0;
\label{Green2}\\
\forall x,y\in \partial\mathbb{R}^n_-, \ x\not=y:&&\quad\Delta_x  \Delta_y H(x,y) >0.
\label{Green3}
\end{eqnarray}
\end{lemma}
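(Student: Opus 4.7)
My plan is to first establish the explicit formula \eqref{3.45} and then read off the three sign statements \eqref{Green1}--\eqref{Green3} by direct calculation. For the formula, the cleanest route is to start from Boggio's classical explicit representation for the biharmonic Green's function on the ball $B_R(-R\vec e_1)$, whose boundary is tangent to $\partial\mathbb{R}^n_-$ at the origin, and let $R\to\infty$. The key algebraic point is that the ``conjugate distance'' $[xy]_R$ entering Boggio's formula satisfies $[xy]_R/|x-y|\to|x^*-y|/|x-y|$ as $R\to\infty$; an elementary expansion then yields \eqref{3.45} with the stated constant $\frac{1}{4ne_n}$. A self-contained alternative is to verify the candidate formula directly, checking that the right-hand side is biharmonic in $y$ off the diagonal, vanishes together with $\partial_{y_1}$ on $\{y_1=0\}$, decays suitably at infinity, and carries the correct fundamental singularity at $y=x$ (as a sanity check: for $y\to x$ one has $A\to\infty$ and $\int_1^A(v^2-1)v^{1-n}\,dv\to\frac{2}{(n-4)(n-2)}$, reproducing the coefficient of the fundamental solution $\Gamma_0$ from \eqref{1.0}).

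The positivity \eqref{Green1} is then immediate: for $x,y\in\mathbb{R}^n_-$ one has $x_1,y_1<0$, hence
\begin{equation*}
|x^*-y|^2-|x-y|^2=(x_1+y_1)^2-(x_1-y_1)^2=4x_1y_1>0,
\end{equation*}
so $A:=|x^*-y|/|x-y|$ strictly exceeds $1$, while the integrand $(v^2-1)v^{1-n}$ is strictly positive on $(1,\infty)$.

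For \eqref{Green2} and \eqref{Green3}, write $H(x,y)=\frac{1}{4ne_n}\,r^{4-n}\phi(A)$ with $r=|x-y|$ and $\phi(t):=\int_1^t(v^2-1)v^{1-n}\,dv$, noting $\phi(1)=\phi'(1)=0$ and $\phi''(1)=2$. Since $H(x,\cdot)$ and $\partial_{\nu_y}H(x,\cdot)$ both vanish identically on $\{y_1=0\}$, all tangential second derivatives in $y$ vanish there as well, so $\Delta_yH=\partial_{y_1}^2H$ on the boundary. Differentiating $A^2=|x^*-y|^2/|x-y|^2$ and evaluating at $y_1=0$ gives $\partial_{y_1}A|_{y_1=0}=2x_1/r^2$; in the chain-rule expansion of $\partial_{y_1}^2H$ all terms carrying $\phi$ or $\phi'$ vanish, and only the $\phi''(1)$-contribution survives, yielding
\begin{equation*}
\Delta_yH(x,y)\big|_{y_1=0}=\frac{2\,x_1^2}{ne_n}\,|x-y|^{-n}>0,
\end{equation*}
which is \eqref{Green2}. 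The right-hand side, viewed as a function of $x$, vanishes on $\{x_1=0\}$ together with its $x_1$-derivative (it is $x_1^2$ times a function smooth in $x_1$), so the analogous reduction $\Delta_x=\partial_{x_1}^2$ applies on $\partial\mathbb{R}^n_-$, and one more direct calculation gives $\Delta_x\Delta_yH(x,y)\big|_{x_1=y_1=0}=\frac{4}{ne_n}|\bar x-\bar y|^{-n}>0$, which is \eqref{Green3}. The only truly delicate point in the whole argument is justifying the limit (or the uniqueness argument) that produces \eqref{3.45}; everything afterwards is bookkeeping in the chain rule.
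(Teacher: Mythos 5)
Your proposal is correct and follows the same route the paper takes: the paper simply cites Boggio (p.\ 126, via a conformal transformation) for the explicit formula \eqref{3.45} and states that \eqref{Green1}--\eqref{Green3} follow ``by direct calculation,'' which is exactly what you carry out, and your chain-rule bookkeeping (vanishing of tangential second derivatives on the boundary, $\partial_{y_1}A\big|_{y_1=0}=2x_1/r^2$, $\phi''(1)=2$, and the resulting values $\Delta_yH\big|_{y_1=0}=\tfrac{2x_1^2}{ne_n}r^{-n}$ and $\Delta_x\Delta_yH\big|_{x_1=y_1=0}=\tfrac{4}{ne_n}|\bar x-\bar y|^{-n}$) checks out. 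Your consistency check on the constant, $\int_1^\infty(v^2-1)v^{1-n}\,dv=\tfrac{2}{(n-4)(n-2)}$ matching $\Gamma_0$ from \eqref{1.0}, is also correct.
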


We proceed by showing that $x_\infty=y_\infty\in\partial\Omega$ is indeed impossible
and recall that by assumption we chose $x_k,y_k$ such that $G_k( x_k,y_k)=0$.
In terms of the transformed Green's  functions this reads
\begin{equation}\label{3.55}
 \tilde{G}_k \left( \rho_k\vec{e}_1,  \frac{y_k'-x_k'}{|x_k'-y_k'|}+ \rho_k\vec{e}_1\right)=0,
\end{equation}
cf. (\ref{3.26}).
After possibly extracting a further subsequence we find a point
$$
\theta=\lim_{k\to\infty} \frac{y_k'-x_k'}{|x_k'-y_k'|}
$$
and may conclude that
\begin{equation}\label{3.56}
\tilde{G} \left(\rho\vec{e}_1,\theta+\rho\vec{e}_1 \right)=0.
\end{equation}
According to the possible location of the limit points we have to distinguish
four cases:

\vspace{2mm} 
\noindent 
{\it Case (a): $\rho<0$ and $\left( \theta+\rho\vec{e}_1 \right)_1<0$.} 
We put $\tilde{x}=(\sigma\circ L) (\rho\vec{e}_1)\in \mathbb{R}^n_-$, 
$\tilde{y}=(\sigma\circ L) (\theta+\rho\vec{e}_1)\in \mathbb{R}^n_-$. According
to  (\ref{3.32}) and Lemma~\ref{lemma3.5} we could conclude that 
$$ 
H(\tilde{x},\tilde{y})= \bar{G} (\tilde{x},\tilde{y}) =0, 
$$ 
which is impossible in view of (\ref{Green1}). 
 
\vspace{2mm} 
\noindent 
{\it Case (b): $\rho=0$ and $\left( \theta+\rho\vec{e}_1 \right)_1<0$.} 
As in the proof of Lemma~\ref{lemma3.2} we conclude from (\ref{3.55}) 
that $\partial_{x_1}^2 \tilde{G} (0,\theta)=0$. Together with the 
Dirichlet boundary conditions satisfied by $\tilde{G} $ this yields 
$\tilde{G} (0,\theta)=0, D_x \tilde{G} (0,\theta)=0, D_x^2\tilde{G} 
(0,\theta)=0$. If we put $\tilde{y}=(\sigma\circ L) (\theta)\in\mathbb{R}^n_-$ 
this implies due 
to (\ref{3.32}) that also $D_x^2\bar{G} (0,\tilde{y})=0$. In particular, we
have  that $\Delta_x H (0,\tilde{y})=\Delta_x \bar{G} (0,\tilde{y})=0$, 
which  is impossible in view of 
(\ref{Green2}). 
 
\vspace{2mm} 
\noindent 
{\it Case (c): $\rho<0$ and $\left( \theta+\rho\vec{e}_1 \right)_1=0$.} 
Due to symmetry of the Green's  function, this case is completely analogous 
to the previous one and hence impossible in view of (\ref{Green2}). 
 
\vspace{2mm} 
\noindent 
{\it Case (d): $\rho=0$ and $\left( \theta+\rho\vec{e}_1 \right)_1=0$.} 
As in the proof of Lemma~\ref{lemma3.4} we conclude from (\ref{3.55}) 
that $\partial_{x_1}^2 \partial_{y_1}^2 \tilde{G} (0,\theta)=0$. Here 
$\theta_1=0, |\theta|=1$. Thanks to the boundary conditions satisfied 
by $\tilde{G} $ this gives $\forall |\alpha|\le 2, |\beta|\le 2: 
\quad D^{\alpha}_x D^{\beta}_y \tilde{G} (0,\theta)=0$. 
Using again  (\ref{3.32}), we see that also $\forall |\alpha|\le 2, 
|\beta|\le 2: \quad D^{\alpha}_x D^{\beta}_y \bar{G} (0,\tilde{y})=0$, where 
$ \tilde{y}=(\sigma\circ L) (\theta)\not=0$. In particular, we come up with 
$\Delta_x\Delta_y H(0,\tilde{y})=\Delta_x\Delta_y \bar{G} (0,\tilde{y})=0$. 
This is impossible in view of (\ref{Green3}).

\bigskip
{\it Conclusion.} In each case  we finally deduced a contradiction
so that $x_\infty=y_\infty\in\partial\Omega$ is indeed impossible.
The proof of Lemma~\ref{lemma3.3} is complete.
\hfill $\square$

\subsection{Proof of Theorems~\ref{smallnegativepart},
\ref{theorem1} and \ref{theorem2}}

Theorem~\ref{theorem2} follows from the conclusions made in Subsections~\ref{interiorinterior},
\ref{interiorboundary} and \ref{boundaryboundary}. 

In order to prove Theorem~\ref{smallnegativepart} we assume for
contradiction that there exist a bounded $C^{4,\alpha}$-smooth domain
$\Omega\subset\mathbb{R}^n$ and sequences $(x_k), (y_k) \subset \Omega$,
$x_k\not=y_k$ with $H_{\Omega} (x_k,y_k) \le 0$ and
$\lim_{k\to\infty} |x_k-y_k|=0$. In view of the smoothness assumption,
we see by working in local coordinate charts that after possibly
passing to a subsequence and relabelling we
find $\tilde{y}_k\in\Omega$, $x_k\not=\tilde{y}_k$
with $H_{\Omega} (x_k,\tilde{y}_k) = 0$ and $|x_k-\tilde{y}_k | \to 0$ for $k\to\infty$. 
Application of Theorem~\ref{theorem2}
in the special case $\Omega_k=\Omega$, $a_k=0$ shows that this is impossible.
This contradiction proves that there exists a $\delta=\delta(\Omega)>0$ 
such that $x,y\in\Omega$, $x\not=y$, $H_{\Omega} (x,y) \le 0 \
\Rightarrow \ |x-y|\ge \delta$. Estimate \eqref{boundfrombelow0} now follows
directly  from (\ref{4.1}) while \eqref{boundfrombelow} is a consequence of
(\ref{twosidedestimate}), i.e. of  DallAcqua and Sweers \cite{DallacquaSweersJDE}.

In order to prove Theorem~\ref{theorem1},
 we assume that no such $\varepsilon_0>0$ exists. In view of the remark 
after Theorem~\ref{theorem1},
we would have a neighbourhood $U$ of $\overline{B}$, $C^{4,\alpha}$-smooth diffeomorphisms
$\psi_k:U\to \psi_k (U)$ and smooth domains $\Omega_k=\psi_k (B)$ with
sign changing biharmonic Green's  functions $H_k$. Hence, one of the alternatives described in
Theorem~\ref{theorem2} would occur for the  biharmonic Green's  function $H$ in the ball $B$.
Since $H$ enjoys precisely the analogous properties of Lemma~\ref{lemma3.6}
(cf. \cite[p. 126]{Boggio}), this is false;
Theorem~\ref{theorem1} follows.\hfill $\square$


\end{document}